\theoremstyle{definition}
\newtheorem{defin}{Definition}[section]
\newtheorem*{defin*}{Definition}
\newtheorem{oss}[defin]{Remark}
\newtheorem*{oss*}{Remark}
\newtheorem*{es*}{Example}
\newtheorem*{dim*}{}
\theoremstyle{plain}
\newtheorem{teo}[defin]{Theorem}
\newtheorem*{teo*}{Theorem}
\newtheorem*{cor*}{Corollary}
\newtheorem{lemma}[defin]{Lemma}
\newtheorem*{lemma*}{Lemma}
\newtheorem{prop}[defin]{Proposition}
\newtheorem*{prop*}{Proposition}
\newcommand{\N}{\mathbb{N}}
\newcommand{\R}{\mathbb{R}}
\let\H\undefined
\newcommand{\H}{\mathbb{H}}
\let\P\undefined
\newcommand{\P}{P_{\phi}}
\newcommand{\h}{^{(h)}}
\newcommand{\ud}{\,\textnormal{d}}
\newcommand{\udH}{\ud \mathcal{H}^{N-1}}
\newcommand{\virg}[1]{``#1''}
\newcommand{\bd}{\partial}
\newcommand{\weakstar}{\overset{\ast}{\rightharpoonup}}
\renewcommand{\div}{\textnormal{div}}
\newcommand{\dist}{\textnormal{dist}}
\newcommand{\sd}{\textnormal{sd}}
\newcommand{\e}{\varepsilon}
\begin{document}
\title[Inhomogeneous Mean Curvature Flows]{Minimizing Movements for Anisotropic and Inhomogeneous Mean Curvature Flows}

\author[Antonin Chambolle]{Antonin Chambolle}
\address[Antonin Chambolle]{CEREMADE department, Université Paris-Dauphine, Université PSL, CNRS, pl. du Maréchal de Lattre de Tassigny, 75016 Paris, France}

\author[Daniele De Gennaro]{Daniele De Gennaro} \address[Daniele De Gennaro]{CEREMADE department, Université Paris-Dauphine, Université PSL, CNRS, pl. du Maréchal de Lattre de Tassigny, 75016 Paris, France}

\author[Massimiliano Morini]{Massimiliano Morini}
\address[Massimiliano Morini]{Dipartimento di Scienze Matematiche, Fisiche e Informatiche, Università di Parma, Parco Area delle Scienze 7/A, Parma, Italy}

\date{}

\maketitle
	
\begin{abstract}
In this paper we address anisotropic and inhomogeneous mean curvature flows with forcing and mobility, and show that the minimizing movements scheme converges to level set/viscosity solutions and to distributional solutions \textit{à la}  Luckhaus-Sturzenhecker to such flows, the latter result holding in low dimension and conditionally to the convergence of the energies. By doing so we generalize recent works concerning the evolution by mean curvature by removing  the hypothesis of translation invariance, which in the classical theory allows one to simplify many arguments. 
\end{abstract}

 \section{Introduction}
In this paper we deal with the anisotropic, inhomogeneous mean curvature flow with forcing and mobility. By inhomogeneous we mean that the flow is driven by surface tensions depending on the position in addition to the orientation of the surface. The evolution  of sets $t\mapsto E_t\subseteq \R^N$ considered is (formally) governed by the law
\begin{equation}\label{smooth law}
    V(x,t)= \psi(x,\nu_{E_t}(x))\left(  -H^\phi_{E_t}(x)+f(x,t)  \right),\quad x\in\bd E_t,\ t\in (0,T),
\end{equation}
where $V(x,t)$ is the (outer) normal velocity of the boundary $\bd E_t$ at $x$, $\phi(x,p)$ is a given anisotropy representing the \textit{surface tension}, $H^\phi$ is the \textit{anisotropic mean curvature} of $\bd E_t$ associated to $\phi$, $\psi(x,p)$ is an anisotropy evaluated at the outer unit normal $\nu_{E_t}(x)$ to $\bd E_t$ which represents a \textit{velocity} modifier (also called the \textit{mobility} term), and $f$ is the \textit{forcing} term. We will be mainly concerned with smooth anisotropies (and the regularity assumptions will be made precise later on): in this case, the curvature $H^\phi$ is the first variation of the anisotropic and inhomogeneous  perimeter associated to the anisotropy $\phi$ (in short, $\phi-$perimeter) defined as
\begin{equation}
    \P(E):=\int_{\bd^* E}\phi(x,\nu_E(x))\udH(x)
\end{equation}
for any set $E$ of finite perimeter (where $\bd^* E$ denotes the reduced boundary of $E$) and, if $E$ is sufficiently smooth, it takes the form
\[ H^\phi_E(x)=\div (\nabla_p \phi(x,\nu_{E}(x))), \]
where with $\nabla_p$ we denote the gradient made with respect to the second variable. Note that  evolution~\eqref{smooth law} can be red as the motion of sets in $\R^N$, when the latter is endowed with the Finsler metric induced by the anisotropy (see Remark \ref{rmk rel geom}). Equation \eqref{smooth law} is relevant in Material Sciences, Crystal Growth, Image Segmentation, Geometry Processing and other fields see e.g.  \cite{AllCah,Des,Gur,Set,Tau}.

The mathematical literature for inhomogeneous mean curvature flows is not as extensive as in the homogeneous case, mainly due to the difficulties arising from the lack of translational invariance. Indeed, assuming that the evolution is invariant under translations allows to simplify many arguments used in the classical proofs of, for example, comparison results and estimates on the speed of evolution. In the homogeneous case the well-posedness theory is nowadays well established and quite satisfactory, both in the local and nonlocal case, and even in the much more challenging crystalline case (that is, when the anisotropy $\phi$ is piecewise affine) see \cite{AlmChaNov,AlmTayWan,BelPao,ChaMorNovPon,ChaMorPon12,ChaMorPon15,CheGigGot,GigPoz22, LauOtt,LucStu,MugSeiSpa} to cite a few.  Concerning the inhomogeneous mean curvature flow, we cite \cite{Hui86,HuiPol} where the short time  existence of smooth solutions on manifolds is shown, and \cite{GigGotIshSat,Ilm}, where the viscosity level set approach  (introduced for the homogeneous evolution in \cite{CheGigGot,EvaSpr}) is extended, respectively, to the equation \eqref{smooth law} and to the Riemannian setting.

In the present work we implement the  {minimizing} movement approach à la Almgren-Taylor-Wang  {(in short, ATW scheme)} \cite{AlmTayWan}  to prove existence via approximation of a level set solution to the generalized anisotropic and inhomogeneous motion \eqref{smooth law}. To carry on this scheme (which has only been sketched in \cite{BelPao}, but lacks a formal proof) we {gain insights from} \cite{ChaMorPon15}.  We also show that, under the additional hypothesis of convergence of the energies \eqref{hp per} and low dimension  \eqref{hp dim}(which are nowadays classical for this approach), the same approximate solutions provide in the limit a suitable notion of \virg{BV-solutions}, also termed distributional solutions, see \cite{LucStu,MugSeiSpa}.

 {There are many more concepts of weak solution for the mean curvature flow. In particular, we cite the diffuse-interface approximation provided by the Allen-Cahn equation \cite{EvaSonSou,Ilm93,HenLau,LauStiUll} and the threshold dynamic scheme \cite{MerBenOsh,EseOtt} (see also the  relative entropy methods of \cite{LauOtt}).  Other recent results concern the weak-strong uniqueness problem, which consists in proving that weak solutions coincide with the smooth ones as long as the latter exist. After classical works concerning viscosity solutions, a new definition of \virg{BV-solution} (whose existence is proved via the Allen-Cahn approximation scheme) allows the authors in  \cite{HenLau,LauStiUll} to prove  weak-strong uniqueness for isotropic and anisotropic mean curvature flows. This result  is based upon the so-called \textit{optimal dissipation inequality} satisfied by their weak solution. In general, it is very difficult to say if the ATW scheme could satisfy such a property, mainly because of the \virg{degeneracy} of the dissipation term in the incremental problem defined via the distance function. Even if all these results concern the translationally invariant case, a study of some of these properties in the inhomogeneous setting seems very interesting and challenging. }

 {Other remarks on possible research directions} are the following. To begin with, the new arguments which are used to compensate the lack of translation invariance are based on the locality of the anisotropic curvature $H^\phi$ associated with a smooth anisotropy $\phi$. This implies that the proofs are not straightforwardly adaptable to the so-called \virg{variational curvatures} considered in \cite{ChaMorPon15}, which are non-local in nature.  On the other hand, since the  crystalline curvatures are highly nonlocal and degenerate operators (see e.g. \cite{ChaMorNovPon, ChaNov22}), they do not fall in the theory constructed in the present work. In principle,  it would be possible to follow the same perturbative study conducted in \cite{ChaMorNovPon} in order to prove at least existence for an inhomogeneous and crystalline mean curvature flow. However, a satisfactory characterization of the limiting motion equation bearing a comparison principle is lacking so far.

This work can be seen as a first step towards  constructing a general theory of motions driven by   {non-translationally invariant} and possibly nonlocal curvatures, in the spirit of \cite{ChaMorPon15}.

\subsection{Main results}
Now briefly recall the minimizing movements procedure in order to state the main results of the paper. Given an initial bounded set $E_0$ and a parameter $h>0$, we define the\textit{ discrete flow} $E_t\h:=T_{h,t-h}E_{t-h}\h $ for any $t\ge h$ and $E_t\h=E_0$ for $t\in [0,h),$ where the functional $T_{h,t}$ is defined for $t\ge 0$ as follows: for any bounded set $E$ we set $T_{h,t} E$ (or, sometimes, $T_{h,t}^- E$) as the minimal solution to the problem 
\begin{equation}
    \min\left\lbrace  P_\phi(F)+\int_F \left(\dfrac{\sd^\psi_E(x)}h+\fint_{[\frac th]h}^{[\frac th]h+h} f(x,s)\ud s \right)\udH(x)\ :\ F \text{ is measurable}\right\rbrace,
\end{equation}
where $\sd^\psi_E(x)$ is the signed geodesic distance between $x$ and $E$ induced by the anisotropy $\psi$ (see \eqref{def sd} for the precise definition) and $[s]=\max \{ n\le s, \ n\in\N\cup\{0\} \}$ denotes the integer part of a non-negative real number $s\in[0,+\infty)$. We will then define  {$T^+_{h,t} E$} as the maximal solution to the problem above. Any $L^1-$limit point as $h\to 0$ of the family $\{E_t\h\}_{t\ge 0}$ will be called a\textit{ flat flow}. In the whole paper we will assume that
\begin{equation}\tag{H0}
    \begin{split}\label{standing hp}
        & \phi\in\mathscr E \text{ (see Definition \ref{def reg ell integr}) and }  \psi \text{ is an anisotropy as in Definition \ref{def anisotropy},  }\\
        & \forall t\in [0,+\infty)  \text{ it holds } f(\cdot,t)\in C^0(\R^N), \|f\|_{L^\infty(\R^N\times[0,+\infty))}<\infty. 
    \end{split}
\end{equation}
With more effort one could weaken the hypothesis and require $\int_0^tf(\cdot,s)\ud s$ to be continuous (see \cite{ChaNov08}). For the sake of simplicity we will require the global-in-time boundedness. We prove existence and H {\"o}lder regularity for flat flows.
\begin{teo}[Existence of flat flows]
    \label{theorem existence flat flow}
    Let $E_0$ be a bounded set of finite perimeter and $\phi,\psi,f$ satisfy \eqref{standing hp}.  {Fix $T>0$}. For any $h>0$, let $\{E_t^{(h)}\}_{ {t\in [0,T)}}$ be a discrete flow with initial datum $E_0$. Then, there exists a family of sets of finite perimeter  {$\{E_t\}_{t\in [0,T)}$} and a subsequence $h_k\searrow 0$ such that
    \[ E_t^{(h)}\to  E_t\quad \text{in } L^1, \]
    for a.e.  {$t\in [0,T)$}. Such flow satisfies the following regularity property:  {there exists a constant $c$, depending on $T$, such that} for every  $0\le s\le t<T$,
    \begin{align*}
        |E_s\triangle E_t|&\le c|t-s|^{1/2},\\
        P_\phi(E_t)&\le P_\phi(E_0)+c.
    \end{align*}
\end{teo}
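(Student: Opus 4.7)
I plan to follow the now-standard Almgren-Taylor-Wang strategy, adapting it to the inhomogeneous setting by carefully tracking the dependence on the position variable. First, the incremental problem defining $T_{h,t}$ admits both minimal and maximal solutions by the direct method: under \eqref{standing hp}, $P_\phi$ is $L^1_{\mathrm{loc}}$-lower semicontinuous, $\sd^\psi_E$ is locally bounded and grows linearly at infinity, and $\|f\|_\infty<\infty$; any minimising sequence is thus confined to a large ball and converges in BV to a minimiser.

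Testing the step $k\mapsto k+1$ with the competitor $F=E_{kh}^{(h)}$ and rearranging yields the basic dissipation inequality
\begin{equation*}
P_\phi(E_{(k+1)h}^{(h)})+\frac{1}{h}\int_{E_{(k+1)h}^{(h)}\triangle E_{kh}^{(h)}}|\sd^\psi_{E_{kh}^{(h)}}|\,dx\le P_\phi(E_{kh}^{(h)})+\|f\|_\infty\,a_k,
\end{equation*}
where $a_k:=|E_{(k+1)h}^{(h)}\triangle E_{kh}^{(h)}|$. A classical tube-type bound -- obtained by splitting the symmetric difference into $\{|\sd^\psi_{E_{kh}^{(h)}}|\le r\}$ and its complement, and optimizing in $r$ after using the $x$-uniform equivalence of $\sd^\psi$ with the Euclidean signed distance together with an isoperimetric control of the $r$-neighbourhood of $\partial E_{kh}^{(h)}$ by $c\,r(P_\phi(E_{kh}^{(h)})+1)$ -- gives
\begin{equation*}
a_k^2\le c\,(P_\phi(E_{kh}^{(h)})+1)\int_{E_{(k+1)h}^{(h)}\triangle E_{kh}^{(h)}}|\sd^\psi_{E_{kh}^{(h)}}|\,dx.
\end{equation*}
Substituting back and applying Young's inequality to absorb the forcing term $\|f\|_\infty a_k$ into the dissipation leaves a discrete Gronwall inequality for $P_\phi(E_{kh}^{(h)})$, which I would use to produce the uniform bound $P_\phi(E_t^{(h)})\le P_\phi(E_0)+c$ on $[0,T]$.

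The H{\"o}lder-$1/2$ estimate then follows in a standard way. Summing the combined dissipation/tube inequality over $k_s\le k<k_t$ (where $k_s:=[s/h]$, $k_t:=[t/h]$) and using the uniform perimeter bound just obtained yields
\begin{equation*}
\sum_{k_s\le k<k_t}a_k^2\le c\,h,
\end{equation*}
and a final Cauchy-Schwarz gives $|E_t^{(h)}\triangle E_s^{(h)}|\le\sqrt{k_t-k_s}\,\bigl(\sum a_k^2\bigr)^{1/2}\le c\sqrt{t-s}$, uniformly in $h$.

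For compactness, a diagonal extraction along a countable dense $\{t_n\}\subset[0,T)$ using BV-compactness (available thanks to the uniform $\phi$-perimeter bound) yields a subsequence $h_k\to 0$ along which $E_{t_n}^{(h_k)}\to E_{t_n}$ in $L^1$ for every $n$; the H{\"o}lder equicontinuity in $t$ then extends convergence to a.e.\ $t\in[0,T)$, and both claimed bounds pass to the limit by $L^1$-lower semicontinuity of $P_\phi$. The main obstacle, in my view, is the tube estimate: in the inhomogeneous setting the dissipation is weighted by the \emph{geodesic} $\psi$-distance rather than the Euclidean one, so the elementary layer-cake bounds of the classical theory must be recovered via the $x$-uniform comparison between $\sd^\psi$ and $\dist(\cdot,\partial E)$ and a matching isoperimetric control on $\psi$-neighbourhoods of $\partial E_{kh}^{(h)}$, both of which rely on the quantitative regularity of $\phi,\psi$ encoded in \eqref{standing hp}.
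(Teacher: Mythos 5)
Your proof takes essentially the same route as the paper: discrete dissipation inequality from testing against the previous step, a tube-type estimate linking $|E_{(k+1)h}^{(h)}\triangle E_{kh}^{(h)}|$ to the dissipation, a discrete Gronwall argument for the uniform perimeter bound, and compactness via Ascoli--Arzel\`a. The paper records the key estimate (its Eq.\ \eqref{stima L1}, quoted from \cite{MugSeiSpa,FusJulMor}) in the parametrised form $a_k\le C\bigl(lP_\phi(E_{kh}^{(h)})+\frac 1l D_k\bigr)$ for $l\le c\sqrt h$, and then picks specific small values of $l$ (namely $l\sim h$) in the two applications, whereas you optimise in $l$ once and for all to get $a_k^2\le c(P_\phi(E_{kh}^{(h)})+1)D_k$. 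This optimised form is valid, but notice the constraint $l\le c\sqrt h$ is not decorative: it is forced by the $L^\infty$ bound on the discrete velocity (Lemma~\ref{L infty estimate}), and without it the unconstrained optimum could exit the admissible range. It turns out the constraint is automatically satisfied because $E_{(k+1)h}^{(h)}\triangle E_{kh}^{(h)}\subseteq\{|\sd^\psi_{E_{kh}^{(h)}}|\le c_\infty\sqrt h\}$, so $D_k\le c_\infty\sqrt h\,a_k$ forces the optimal $l$ below $c\sqrt h$ anyway, but this point deserves to be made explicit. Your Cauchy--Schwarz route $\sum a_k^2\le ch$ for the H\"older bound is slightly slicker than the one the paper delegates to \cite{FusJulMor}.

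The one place where you are genuinely waving your hands is the isoperimetric tube control $|\{|\sd^\psi_{E_{kh}^{(h)}}|\le r\}|\le c\,r\,(P_\phi(E_{kh}^{(h)})+1)$. You attribute the difficulty to the mismatch between $\sd^\psi$ and the Euclidean distance, but the real issue is that this tube estimate is false for a general set of finite perimeter (think of a set with a highly oscillatory boundary of small total perimeter but a fat tubular neighbourhood). It holds here only because each $E_{kh}^{(h)}$ is a quasi-minimiser of the incremental problem, which yields lower \emph{perimeter} density estimates $P_\phi(E_{kh}^{(h)};B_r(x))\ge\sigma r^{N-1}$ for $x\in\partial E_{kh}^{(h)}$; covering the tube by Vitali-disjoint balls then gives the claimed bound. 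These density estimates are precisely what the paper provides (Lemma~\ref{lemma density estimate} and Lemma~\ref{lemma 3.7 in ChaMorNovPon}), and the restriction on the admissible scale is again where $r\le c\sqrt h$ enters, via the $L^\infty$ velocity bound. So the $x$-uniform comparison between $\sd^\psi$ and Euclidean distance is only the easy half of your ``main obstacle''; the quasi-minimality input is the other, and without it the chain of inequalities would collapse. Finally, for the compactness step you also need the uniform spatial boundedness of the evolving sets (Lemma~\ref{lemma a priori estimate}), which you should state; BV-compactness requires both a perimeter bound and confinement to a fixed bounded region.
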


Subsequently, we will show that flat flow {s} are distributional solutions, as defined  in \cite{LucStu}. We will require additional hypothesis: firstly, low dimension \eqref{hp dim} (linked to the complete regularity of the $\phi-$perimeter minimizer, compare \cite{LucStu,MugSeiSpa}), moreover
 \begin{align}
       &\exists \,c_\psi>0\ \text{s.t. } |\psi (x,v)-\psi(y,v)|\le c_\psi |x-y|,\quad \forall x,y\in\R^N, v\in  {S^{N-1}} \tag{H1}\label{hp lip psi 1},\\
    &f\in C^0(\R^N\times [0,\infty)])\tag{H2}\label{hp f 1}.
 \end{align}

\begin{teo}[Existence of distributional solutions] \label{theorem existence distributional solutions}
     Assume \eqref{standing hp}, \eqref{hp lip psi 1}, \eqref{hp f 1} and  \eqref{hp dim}. For any $T>0$, if
    \begin{equation}\label{hp per}
        \lim_{k\to \infty}\int_0^T P_\phi(E_t^{(h_k)})=\int_0^T P_\phi(E_t),
    \end{equation}
    then $\{E_t\}_{ {t\in [0,T]}}$ is a distributional solution \eqref{smooth law} with initial datum $E_0$ in the following sense: 
    \begin{itemize}
        \item[(1)] for a.e. $t\in [0,T)$ he set $E_t$ has weak $\phi-$curvature $H^\phi_{E_t}$ (see \eqref{def curvature variational} for details) satisfying 
        \[ \int_0^T\int_{\bd ^*E_t}   |H^\phi_{E_t}|^2<\infty; \]
        \item[(2)] there exist $v:\R^N\times (0,T)\to \R$ with $ \int_0^T\int_{\bd ^*E_t}   v^2\udH\ud t<\infty$ and  $v(\cdot,t)\big\lvert_{\bd E_t}\in L^2(\bd E_t)$ for a.e. $t\in [0,T)$, such that 
        \begin{align}
            -\int_0^T\int_{\bd^*E_t} v\eta\udH\ud t &= \int_0^T\int_{\bd^* E_t}  \left(H^\phi_{E_t} - f \right) \eta\udH\ud t   \label{legge curvatura}\\
            \int_0^T\int_{E_t} \bd_t \eta \ud x\ud t+ \int_{E_0} \eta(\cdot,0)\ud x&= -\int_0^T\int_{\bd^*E_t} \psi(\cdot,\nu_{E_t})  v\eta \udH \ud t, \label{legge velocita}
        \end{align}
        for every $\eta\in C^1_c(\R^N \times  {[0,T)}).$
    \end{itemize}
\end{teo}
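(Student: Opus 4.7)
The strategy is to adapt the Luckhaus--Sturzenhecker approach of \cite{LucStu,MugSeiSpa} to the inhomogeneous anisotropic setting, by extracting a discrete curvature equation from the Euler--Lagrange condition of each incremental problem and passing to the limit as $h\to 0$. For each $h$ I would set $v^{(h)}(x,t):=-\sd^\psi_{E^{(h)}_{t-h}}(x)/h$. The first variation of the incremental functional on $\partial^* E^{(h)}_t$ immediately yields, in the distributional sense,
\begin{equation*}
H^\phi_{E^{(h)}_t}(x) = v^{(h)}(x,t) - \bar f_h(x,t),
\end{equation*}
where $\bar f_h$ is the piecewise-constant time-average of $f$ on the grid of step $h$. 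Identifying the limit of $v^{(h)}$ as the $v$ appearing in \eqref{legge curvatura} is then the central task.

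The starting bound is the standard dissipation inequality obtained by testing the minimality of $E^{(h)}_t$ against the trivial competitor $E^{(h)}_{t-h}$ and telescoping in time:
\begin{equation*}
\sum_k \int_{E^{(h)}_{(k+1)h}\triangle E^{(h)}_{kh}} \frac{|\sd^\psi_{E^{(h)}_{kh}}(x)|}{h}\,dx \leq C,
\end{equation*}
for a constant depending only on $P_\phi(E_0)$, $\|f\|_\infty$ and $T$. Since the $\psi^\circ$-norm of the gradient of $\sd^\psi$ equals one, the coarea formula turns this into an $L^2$-type bound of $v^{(h)}$ against the boundary measures $\mathcal{H}^{N-1}\restr \partial^* E^{(h)}_t$. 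Weak-$*$ compactness of the associated space--time measures follows; assumption \eqref{hp per}, as in \cite{LucStu}, lets one identify the weak limit as $v\in L^2$ carried by $\partial^* E_t$, while the low-dimension condition \eqref{hp dim} gives, through the regularity theory of $\phi$-perimeter quasi-minimizers, strong $L^2$-convergence of the normals $\nu_{E^{(h)}_t}$.

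With these two ingredients, I would pass to the limit in the distributional first-variation identity: for every $X\in C^1_c(\R^N\times [0,T);\R^N)$,
\begin{equation*}
\int_0^T\!\!\int_{\partial^* E^{(h)}_t} \bigl[\div_\tau^\phi X + (v^{(h)}-\bar f_h)\, X\cdot \nu_{E^{(h)}_t}\bigr]\,\udH\,dt = 0,
\end{equation*}
where $\div_\tau^\phi$ denotes the anisotropic tangential divergence encoding the first variation of $P_\phi$. The Lipschitz dependence on $x$ of $\phi$ and $\psi$ built into \eqref{standing hp} and \eqref{hp lip psi 1} ensures continuity of the integrands, and strong convergence of normals then produces both the weak curvature \eqref{def curvature variational} and the identity \eqref{legge curvatura}, with $H^\phi_{E_t}\in L^2$. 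For the velocity equation \eqref{legge velocita}, I would carry out a discrete integration by parts: for $\eta\in C^1_c(\R^N\times [0,T))$,
\begin{equation*}
-\int_{E_0}\eta(\cdot,0)\,dx - \sum_k \int_{E^{(h)}_{kh}}\!\bigl(\eta(\cdot,(k+1)h)-\eta(\cdot,kh)\bigr)\,dx = \sum_k \int_{\R^N}\!\bigl(\chi_{E^{(h)}_{(k+1)h}}-\chi_{E^{(h)}_{kh}}\bigr)\eta(\cdot,kh)\,dx,
\end{equation*}
and rewrite the right-hand side by the coarea formula applied to $\sd^\psi_{E^{(h)}_{kh}}$ on the symmetric difference; the factor $\psi(\cdot,\nu_{E_t})$ in front of $v$ emerges naturally from the anisotropic Jacobian of this change of variables upon passing to the limit, together with the strong convergence of normals and the continuity of $\psi$ in $x$.

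The main technical difficulty lies in the passage to the limit in the first-variation identity without the crutch of translation invariance: competitor constructions and local density estimates that in the homogeneous case follow from rigid translations must here be replaced by local comparison arguments exploiting the locality of the smooth curvature $H^\phi$ and the Lipschitz dependence on $x$ of the anisotropies, as already foreshadowed in the introduction. Equivalently, the strong $L^2$-convergence of the discrete normals on the limit boundary, which is essential to identify the weak $\phi$-curvature, is exactly what makes both \eqref{hp per} and \eqref{hp dim} genuinely necessary and accounts for the conditional nature of the statement.
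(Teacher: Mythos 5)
Your high-level strategy is the right one (Euler--Lagrange identity, dissipation estimate, $L^2$ bound on the discrete velocity, pass to the limit using \eqref{hp per} and Reshetnyak/varifold compactness), but the proposal elides precisely the step that carries the technical weight in the inhomogeneous setting, and the substitute you offer would not close the argument. The claim that ``the factor $\psi(\cdot,\nu_{E_t})$ in front of $v$ emerges naturally from the anisotropic Jacobian of the change of variables'' via the coarea formula does not hold as stated: coarea applied to $\sd^\psi_{E^{(h)}_{t-h}}$ yields integrals over the level sets of $\sd^\psi_{E^{(h)}_{t-h}}$, not over $\bd E^{(h)}_t$, with Jacobian factor $1/|\nabla\sd^\psi_{E^{(h)}_{t-h}}|$; to turn this into $\psi(x,\nu_{E^{(h)}_t})$ on $\bd E^{(h)}_t$ one needs (a) that $\nabla\sd^\psi_{E^{(h)}_{t-h}}$ is nearly parallel to $\nu_{E^{(h)}_t}$, and (b) that the eikonal equation can be exploited with the anisotropy approximately \emph{frozen} at a point, since for $x$-dependent $\psi$ the characteristics of $\sd^\psi$ are not straight lines. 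Both points are exactly where the paper has to work: it establishes, via Lemma~\ref{lemma computations} and the composition-of-blow-ups Lemma~\ref{lemma composition of blow-ups}, that $\bd E_t^{(h)}$ and $\bd E^{(h)}_{t-h}$ are locally $\omega(h)$-flat $C^{1,1/2}$ graphs with $\omega(h)$-aligned normals, and then proves the key Proposition~\ref{estimates on flat sets} by freezing the anisotropy (using \eqref{hp lip psi 1}), comparing to a tangent half-space, and invoking the half-space identity $\sd'_{\H}(x)\,\psi(x',\nu)=x\cdot\nu$ deduced from $\psi(\nabla\sd'_{\H})=1$. That is what actually produces the factor $\psi(\cdot,\nu_{E_t})$; without this proposition there is no Error Estimate (Proposition~\ref{proposition 2.2 LucStu}), and hence no proof of \eqref{legge velocita}.

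In addition, the paper explicitly replaces the Bernstein-type regularity argument used in the homogeneous setting with a \emph{double blow-up} (two nested blow-up scales $h^\beta$, $h^{\beta'}$, Lemma~\ref{lemma composition of blow-ups}), and this is an irreducible part of getting the local flatness estimates to be uniform in $h$; your proposal makes no provision for this, and ``local comparison arguments exploiting the locality of $H^\phi$'' is too vague to serve as a substitute. You also mislocate the role of \eqref{hp dim}: it is used to guarantee that the discrete minimizers are smooth (so the Euler--Lagrange equation \eqref{EL equation} holds classically and the blow-up limits are smooth $0$-minimizers) rather than to give $L^2$-convergence of normals, which in the paper comes from \eqref{hp per} together with Reshetnyak's continuity theorem and Hutchinson's varifold compactness. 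Finally, there is a small but real sign slip: from the incremental functional the Euler--Lagrange identity reads $H^\phi_{E^{(h)}_t}=-\sd^\psi_{E^{(h)}_{t-h}}/h+F_h$, so with your convention $v^{(h)}=-\sd^\psi/h$ you should get $H^\phi_{E^{(h)}_t}=v^{(h)}+\bar f_h$, not $v^{(h)}-\bar f_h$.
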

The definitions $1),2)$ extend to our case the definition of $BV$-solutions of \cite{LucStu} and the distributional solutions of \cite{MugSeiSpa}. We recall that hypothesis \eqref{hp per} ensures that the evolving sets avoid the so-called \virg{fattening} phenomenon. It is known that this hypothesis is satisfied in the case of evolution of convex or mean-convex sets, see e.g. \cite{ChaNov22,DePLau,FucLau}, but in general is not known under which general hypothesis it is valid. We also remark that the proof of the theorem above provides a  {detailed} proof of \cite[Theorem 3.2]{ChaNov22}, which had only been sketched.  {Moreover, we bypass the use of a  Bernstein-type result (which is usually employed) by a double blow-up technique.}

In the second part of the work we will focus on the level set approach. Briefly, given an initial compact set $E_0$, we set $u_0$ such that  $\{u_0 \ge 0\}=E_0$ and we look for a solution $u$ in the viscosity sense (in a sense made precise in Definition \ref{def visco sol}) to 
\begin{equation}\label{intro cauchy}
	\begin{cases}
		\bd_t u+\psi(x,-\nabla u)\left( \div \nabla_p \phi(x,  \nabla u(x))-f(x,t) \right)=0\\
		u(\cdot,t)=u_0.
	\end{cases}
\end{equation}
Classical remarks ensure that any level set $\{u \ge s\}$ is evolving following the mean curvature flow \eqref{smooth law}. To prove existence for \eqref{intro cauchy} we use an approximating procedure.  For $h>0$ and $t\in (0,+\infty)$ we set iteratively  $u_h^\pm(\cdot,t)=u_0$ for $t\in [0,h)$ and for $t\ge h$
\begin{equation*}
	\begin{split}
		u^+_h(x,t)&:=\sup\left\lbrace s\in\R\ :\ x\in T_{h,t-h}^+\{ u_h^+(\cdot,t-h)\ge s \}\right\rbrace   \\
		u_h^-(x,t)&:=\sup\left\lbrace s\in\R\ :\ x\in T_{h,t-h}^-\{ u_h^-(\cdot,t-h)> s \}\right\rbrace,
	\end{split}
\end{equation*}
where the operator $T_{h,t}^\pm$ has been previously introduced. We remark that these are  maps piecewise constant in time, since $T^\pm_{h,t}=T^\pm_{h,[t/h]h}$, which are  only upper and lower semicontinuous in space respectively. Then, we will pass to the limit $h\to 0$ on the families $\{u_h^\pm\}_h$ to find  functions $u^+, u^-$ which are viscosity sub {-} and supersolution respectively of equation \eqref{intro cauchy}.  Passing to the limit as $h\to 0$ in our case is not straightforward. The main issue is that we do not have an uniform estimate on the modulus of continuity of the functions $u_h$ (compare \cite{ChaMorPon15}) and thus we can not pass to the (locally) uniform limit of the sequence. (More precisely, our best estimate contained in Lemma \ref{lemma estimate decay balls} decays too fast as $h\to 0$ to provide any useful information). Nonetheless, motivated by \cite{BarSonSou,Bar-rev,BarSou} we can define the half-relaxed limits 
\begin{equation}\label{def sottosoluzione}
	\begin{split}
		u^+(x,t):=& \sup_{(x_h,t_h)\to (x,t)}\limsup_{h\to 0} u_h^+(x_h,t_h)   \\
		u^-(x,t):=& \inf_{(x_h,t_h)\to (x,t)}\liminf_{h\to 0} u_h^-(x_h,t_h),
	\end{split}
\end{equation}
and prove that the functions defined above are sub {-} and supersolutions, respectively, to \eqref{intro cauchy}. The main difficulty in this regard is that we need to work with  just semicontinuous functions in space, as in the translationally invariant setting one can easily  prove the uniform equicontinuity of the approximating sequence. We prove the following.
\begin{teo}\label{teo sol viscosa}
    Assume \eqref{standing hp}, \eqref{hp lip psi 1} and $f\in C^0(\R^n\times [0,+\infty))$. The function $u^+$ (respectively $u^-$) defined in \eqref{def sottosoluzione} is a viscosity subsolution (respectively a viscosity supersolution) of \eqref{intro cauchy}.
\end{teo}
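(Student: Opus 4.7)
The plan is to follow the classical Barles--Souganidis / Barles--Perthame approach to convergence of monotone schemes, the main novelty being a consistency lemma that works without translation invariance. By construction of the upper half-relaxed limit, $u^+$ is upper semicontinuous, so it suffices to verify the subsolution inequality at smooth test functions; the supersolution argument for $u^-$ is entirely symmetric, replacing $T^+$ by $T^-$ and $\sup$ by $\inf$. Fix $\varphi\in C^2(\R^N\times(0,+\infty))$ such that $u^+-\varphi$ has a strict local maximum at $(\hat x,\hat t)$ with $\hat t>0$, and focus on the non-degenerate case $\nabla\varphi(\hat x,\hat t)\neq 0$; the degenerate case is absorbed by the standard Chen--Giga--Goto envelope convention for geometric level-set operators. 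Since $u_h^+(\cdot,t)$ is upper semicontinuous in $x$ and piecewise constant in $t$ on intervals $[nh,(n+1)h)$, a standard Barles--Perthame extraction produces $h_k\searrow 0$ and $(x_k,t_k)\to(\hat x,\hat t)$, with $t_k=n_k h_k$ and $n_k\ge 1$, at which $u_{h_k}^+-\varphi$ attains a local maximum on a fixed ball and $u_{h_k}^+(x_k,t_k)\to u^+(\hat x,\hat t)$.

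The next step is to reformulate the maximum condition as a geometric inclusion. Set $s_k:=u_{h_k}^+(x_k,t_k)$ and $E_k:=\{u_{h_k}^+(\cdot,t_k)\ge s_k\}$. Because of the supremum in the definition of $u_{h_k}^+$, $E_k$ is the left-limit in the level parameter of the maximal ATW sets $T_{h_k,t_k-h_k}^+\{u_{h_k}^+(\cdot,t_k-h_k)\ge s\}$ and hence inherits the extremality property of the ATW scheme, with initial datum $E_{k-1}:=\{u_{h_k}^+(\cdot,t_k-h_k)\ge s_k\}$. The maximum condition $u_{h_k}^+\le\varphi+s_k-\varphi(x_k,t_k)$ on $B_r(x_k)\times[t_k-h_k,t_k]$ yields $E_k\cap B_r(x_k)\subseteq\Omega_k\cap B_r(x_k)$, where $\Omega_k:=\{\varphi(\cdot,t_k)\ge\varphi(x_k,t_k)\}$, with contact at $x_k\in\partial E_k\cap\partial\Omega_k$; by non-degeneracy, $\partial\Omega_k$ is a smooth hypersurface near $x_k$ converging in $C^2$ to $\partial\{\varphi(\cdot,\hat t)\ge\varphi(\hat x,\hat t)\}$.

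The core of the proof is a discrete consistency argument. Assume toward contradiction that
\[
\partial_t\varphi(\hat x,\hat t)+\psi(\hat x,-\nabla\varphi(\hat x,\hat t))\bigl(\div\nabla_p\phi(\hat x,\nabla\varphi(\hat x,\hat t))-f(\hat x,\hat t)\bigr)>0,
\]
so that the smooth family $t\mapsto\{\varphi(\cdot,t)\ge c\}$ is a \emph{strict} classical superflow of \eqref{smooth law} in a small cylinder around $(\hat x,\hat t)$. The goal is then a discrete consistency lemma asserting that, for $k$ large, the operator $T_{h_k,t_k-h_k}^+$ cannot push $E_{k-1}$ past $\Omega_{t_k}$ near $x_k$, thereby contradicting $x_k\in E_k\subseteq\partial\Omega_k$. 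The natural strategy is to test the maximality of $E_k$ against smooth inward normal perturbations of $\partial\Omega_{t_k}$, and to expand the incremental energy $P_\phi(F)+\int_F\bigl(\sd^\psi_{E_{k-1}}/h_k+\bar f_k\bigr)\,dx$ to leading order in $h_k$: the leading term reproduces exactly the left-hand side of \eqref{smooth law} evaluated at $(\hat x,\hat t)$, while error terms are $o(1)$ uniformly in a shrinking neighborhood.

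The main obstacle is precisely this consistency lemma in the inhomogeneous setting. In the translationally invariant theory (cf.\ \cite{ChaMorPon15}) one could compare $E_k$ with \emph{translates} of $\Omega_{t_k}$, but here translations are no longer admissible competitors, since both $\phi$ and $\psi$ depend explicitly on $x$. One must instead use genuine $C^2$ normal perturbations of $\partial\Omega_{t_k}$ adapted to the Finsler metric $\psi$; these perturbations generate $x$-dependent corrections in the first-order expansions of both $P_\phi$ and of the $\psi$-signed distance $\sd^\psi_{E_{k-1}}$, which are to be controlled via the Lipschitz estimate \eqref{hp lip psi 1} on $\psi$, the regularity of $\phi\in\mathscr E$, and uniform $C^2$ bounds on $\partial\Omega_k$ near $x_k$ coming from $\varphi\in C^2$ and $\nabla\varphi(\hat x,\hat t)\neq 0$. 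The strict sign in the assumed violation then provides the fixed margin needed to absorb these $o(1)$ errors, closing the contradiction and establishing the subsolution property for $u^+$.
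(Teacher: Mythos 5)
Your proposal correctly identifies the overall architecture of the argument (half-relaxed limits, extraction of approximate maximum points, a geometric inclusion, and a consistency step for the minimizing-movements operator), but it stops precisely at the point where the actual work begins. The ``discrete consistency lemma'' you invoke is not a known result that can be quoted; it \emph{is} the proof, and you explicitly acknowledge it as the ``main obstacle'' without supplying it. The paper's proof carries out this step concretely: it introduces, for $\varepsilon>0$ small, the set $E_{\varepsilon,k}=\{u_{h_k}(\cdot,t_k-h_k)\ge l_k-\varepsilon\}$ and the truncated competitor $W_\varepsilon=\bigl(T^+_{h,t_k-h_k}E_{\varepsilon,k}\bigr)\setminus\{\eta^\sigma_{h_k}>l_k+\varepsilon\}$, uses minimality together with submodularity of $P_\phi$ to obtain an inequality localized on $W_\varepsilon$, estimates the perimeter difference by a calibration argument $\int_{W_\varepsilon}\div\,\nabla_p\phi(x,\nabla\eta^\sigma_{h_k})$, and bounds the discrete dissipation term $\frac 1{h_k}\sd^\psi_{\{\eta(\cdot,t_k-h_k)=l_k-c_k-\varepsilon\}}$ from below via a Taylor expansion whose validity hinges on the nontrivial estimate $|z-y|=O(h_k)$. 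None of these steps — the definition of $W_\varepsilon$, the need for the parameter $\varepsilon$ to handle the fact that $\{u_{h_k}^+(\cdot,t_k)\ge \lambda\}$ may strictly contain $T^+_{h,t_k-h_k}\{u_{h_k}^+(\cdot,t_k-h_k)\ge\lambda\}$, the calibration trick to replace translations, or the $O(h_k)$ closeness estimate — appear in your sketch.

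A second genuine gap is the degenerate case $\nabla\varphi(\hat x,\hat t)=0$. You dismiss it as ``absorbed by the standard Chen--Giga--Goto envelope convention.'' That convention concerns the \emph{definition} of viscosity solutions for geometric equations, not the verification that a given limit of an approximation scheme satisfies the degenerate condition $\partial_t\varphi(\hat x,\hat t)\le 0$. The paper proves this by an explicit argument, choosing a modified test function $\tilde\eta_k$ with a $1/(k(\bar t-t))$ barrier, isolating the subcase $x_k=\bar x$, and exploiting the ball extinction estimate of Lemma~\ref{lemma estimate decay balls} (which, crucially, required the same inhomogeneous comparison machinery developed earlier) to show that small balls around $\bar x$ survive long enough to force $\partial_t\eta(\bar z)\le 0$. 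This is a real part of the proof and cannot be waved away.

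Finally, a smaller point: you phrase the non-degenerate case as a proof by contradiction against a strict superflow, while the paper establishes the inequality directly. Both framings can be made to work, but the contradiction framing does not buy you anything here — the expansion of the incremental energy would have to be done in either case, and the strict margin you would gain from the contradiction hypothesis is not actually used to close any of the error estimates in a way that the direct argument would not also handle.
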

Thanks to the results of \cite{CheGigGot} we then prove that, under the additional hypothesis
\begin{equation}\tag{H3}\label{hp 2}
    \begin{split}
        &\nabla_{x}\nabla_p \phi(\cdot,p) \text{ and } \nabla^2_p\phi(\cdot,p) \text{ are Lipschitz, uniformly for } p\in  {S^{N-1}}\\ 
        &\nabla_p^2 \phi^2(x,p) \text{ is  uniformly elliptic in } p, \text{uniformly in }x \\
	& \psi(\cdot,p) \text{ Lipschitz continuous, uniformly in } p\\
	& f(\cdot,t)\text{ Lipschitz continuous, uniformly in } t,
    \end{split}
\end{equation}
the following uniqueness result holds.
\begin{teo}\label{existence and regularity of viscosity solutions}
	Assume \eqref{standing hp} and \eqref{hp 2}. If $u_0$ is a continuous function which is spatially constant outside a compact set, equation \eqref{intro cauchy} with initial condition $u_0$ admits a unique continuous viscosity solution $u$ given by \eqref{def sottosoluzione}.  In particular, $u^+=u^-=u$ is the unique continuous viscosity solution to \eqref{intro cauchy} and $u_h^\pm\to u$ as $h\to 0$, locally uniformly.
\end{teo}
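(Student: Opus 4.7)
The plan is to deduce the statement from the Barles--Perthame machinery of half-relaxed limits, combined with a comparison principle for \eqref{intro cauchy} that is available, under \eqref{hp 2}, from \cite{CheGigGot} and its $x$-dependent extension \cite{GigGotIshSat}. The overall scheme is the one familiar from the translation-invariant setting: once $u^+$ is known to be a subsolution and $u^-$ a supersolution and once one has $u^-\le u^+$ from the construction, a comparison principle sandwiches the two relaxed limits and forces them to coincide with a continuous viscosity solution.

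First I would record the structural information given for free. By Theorem~\ref{teo sol viscosa}, $u^+$ is a USC subsolution and $u^-$ an LSC supersolution. The very construction gives $u^-\le u^+$ pointwise: by induction on the ATW time steps, using the inclusion $T^-_{h,t-h}E\subseteq T^+_{h,t-h}E$, the monotonicity of $T^\pm_{h,t-h}$ with respect to set inclusion, and the difference between strict and non-strict sublevels in \eqref{def sottosoluzione}, one checks $u_h^-\le u_h^+$ at each step, and the inequality is preserved by the half-relaxed limits. Moreover the hypothesis that $u_0$ is continuous and spatially constant outside a compact set propagates: standard comparisons with evolving balls, available under \eqref{standing hp}, confine every non-trivial superlevel set $\{u_0\ge s\}$ inside a fixed ball depending only on $\|u_0\|_\infty,\|f\|_\infty$ and $T$. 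Consequently $u^\pm(\cdot,0)=u_0$ and $u^\pm(\cdot,t)$ is spatially constant outside a fixed compact set for every $t\in[0,T]$.

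Next I would invoke the comparison principle. The Hamiltonian associated with \eqref{intro cauchy} is geometric (invariant under $p\mapsto\lambda p$, $X\mapsto\lambda X$ for $\lambda>0$), degenerate parabolic, and, thanks to \eqref{hp 2}, satisfies the structural conditions of \cite{CheGigGot,GigGotIshSat}: uniform ellipticity of $\nabla_p^2\phi^2$ in $p$ uniformly in $x$ controls the principal term, while the Lipschitz dependence of $\nabla_x\nabla_p\phi$ and $\nabla_p^2\phi$ on $x$ (uniformly for $p\in S^{N-1}$) together with the Lipschitz regularity of $\psi$ and $f$ in $x$ control the lower-order and $x$-dependent terms. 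Together with the spatial-compactness reduction from the previous step, the comparison principle of \cite{CheGigGot,GigGotIshSat} then yields $u^+\le u^-$ on $\R^N\times[0,T]$, for every $T>0$.

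Combining $u^-\le u^+$ with $u^+\le u^-$ and using the opposite semicontinuity of the two envelopes forces $u:=u^+=u^-$ to be continuous; by construction this $u$ is a viscosity solution of \eqref{intro cauchy} with $u(\cdot,0)=u_0$, and uniqueness among continuous viscosity solutions is a direct reapplication of the same comparison principle. Finally, the local uniform convergence $u_h^\pm\to u$ is the standard Dini-type consequence of the half-relaxed limits coinciding with a continuous function. The hardest step will be the comparison argument: the doubling-of-variables must absorb the $x$-dependence of both $\phi$ and $\psi$ in the quadratic penalty, and this is exactly why \eqref{hp 2} imposes Lipschitz regularity on $\nabla_x\nabla_p\phi$ jointly with uniform ellipticity of $\nabla_p^2\phi^2$; verifying that the hypotheses of \cite{CheGigGot,GigGotIshSat} are met verbatim in our precise formulation, rather than only in spirit, is the only genuinely delicate point of the argument.
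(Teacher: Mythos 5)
Your overall scheme is the same one the paper follows: use Theorem~\ref{teo sol viscosa} for the sub/supersolution pair, use Lemma~\ref{lemma initial datum} for the ordering $u^-\le u^+$ and the matching of the initial datum, and then invoke the comparison principle of \cite{GigGotIshSat} to force $u^+\le u^-$ and hence equality and continuity. That is exactly the architecture of the paper's argument. What your proposal treats as a closing remark, however --- ``verifying that the hypotheses of \cite{CheGigGot,GigGotIshSat} are met verbatim \dots is the only genuinely delicate point'' --- is in fact the entire content of the paper's proof, and you do not carry it out. The paper works directly with \cite[Theorem~4.2]{GigGotIshSat}: after rewriting the operator as $F(t,x,p,X)=\psi(x,-p)\bigl(-\sum_i\bd_{x_i}\bd_{p_i}\phi(x,-p)+\nabla_p^2\phi(x,-p):X+f(x,t)\bigr)$, it checks $(A1)$--$(A3)$ via Lemma~\ref{lemma initial datum} and then goes through $(F1),(F3)$--$(F5),(F6'),(F7),(F9),(F10)$ one by one. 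Of these, $(F6')$ (local Lipschitz continuity in $p$ away from $p=0$) is checked by splitting the increment and using the Lipschitz bounds in \eqref{hp 2} together with the $(-1)$-homogeneity of $\nabla_p^2\phi$; $(F7)$ is read off from the homogeneity degrees of the two pieces of $F$ in $p$ and $X$; and $(F10)$ is the genuinely nontrivial one, requiring one to multiply by $\phi(x,-p)$, split $A=\frac12\nabla_p^2\phi^2-\nabla_p\phi\otimes\nabla_p\phi$, and Cholesky-factorize the uniformly elliptic part $\frac12\nabla_p^2\phi^2=\tilde L\tilde L^T$, following \cite[Proposition 6.1]{BelPao} and \cite[pg.~463]{GigGotIshSat}. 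Absent this verification your argument is a restatement of what needs proving rather than a proof; in particular, pointing to ``the doubling-of-variables'' as absorbing the $x$-dependence is not enough --- the degeneracy of $F$ at $p=0$ and the structure condition $(F10)$ are where the hypotheses \eqref{hp 2} are actually consumed, and this must be made explicit.
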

The previous result  yields  a proof of consistency between the level set approach and the minimizing movements one to study  the evolution \eqref{smooth law}.  We recall that it has been  established for the classical mean curvature flow in \cite{Cha}, in the anisotropic but homogeneous case in \cite{EtoGigIsh} and in a very general nonlocal setting in \cite{ChaMorPon15}.

\section{Preliminaries}
We start introducing some notations. We consider $0\in\N$. We will use both $B_r(x)$ and $B(x,r)$ to denote the Euclidean ball in $\R^N$ centered in $x$ and of radius $r$;  with $B^{N-1}_r(x)$ we denote the Euclidean ball in $\R^{N-1}$ centered in $x$ and of radius $r$; with $ {S^{N-1}}$ we denote the sphere $\bd B_1(0)\subseteq \R^N$;  {with $Sym_N$ the symmetric real matrices of size $N\times N$}.
In the following, we will always speak about measurable sets and refer to a set as the union of all the points of density $1$ of that set i.e. $E=E^{(1)}.$ If not otherwise stated, we implicitly assume that the function spaces considered are defined on $\R^N$,  e.g $L^\infty=L^\infty(\R^N)$;  {the space $C^0$ denotes the space of continuous functions.} Moreover, we often drop the measure with respect to which we are integrating, if clear from the context.

\begin{defin}\label{def anisotropy}
   We define \textit{anisotropy} (sometimes defined as an \textit{elliptic integrand}) a function $\psi$ with the following properties:  $\psi(x,p):\R^N\times \R^{N}\to [0,+\infty)$ is a continuous function, which is convex and positively 1-homogeneous in the second variable,  such that
\begin{equation*}
    \dfrac 1{c_\psi}|p|\le \psi(x,p)\le c_\psi |p|
\end{equation*}
for any point $x\in\R^N$ and vector $p\in\R^N$.
\end{defin}
We remark that, as standard, we define a real function $f$ positively 1-homogeneous if for any $\lambda\ge 0,$ it holds $f(\lambda x)=\lambda f (x)$. In particular, the anisotropies that we will consider are not symmetric. In the following, we will always denote the gradient of an anisotropy with respect to the first (respectively second) variable as $\nabla_x\psi$ (respectively $\nabla_p\psi$).  We then recall the definition of some well-known quantities (see \cite{BelPao}). Define the polar function of an anisotropy $\psi$, denoted with $\psi^\circ$, as 
\begin{equation}\label{CS type ineq}
    \psi^\circ(\cdot,\xi):=\sup_{p\in \R^N}\left\lbrace \xi\cdot p\ :\ \psi(\cdot,p)\le 1\right\rbrace.   
\end{equation}
Using the definition it is easy to see that for all $ p,\xi\in\R^N$ it holds
\[ \psi(\cdot,p)\psi^\circ(\cdot,\xi)\ge p\cdot \xi,\quad  -\psi(\cdot,-p)\psi^\circ(\cdot,\xi)\le  p\cdot \xi. \]
Furthermore, one can prove that (see \cite{BelPao})  {for $p\neq 0$}
\[ \psi^\circ(\nabla_p\psi)=1,\ \psi(\nabla_p\psi^\circ)=1,\  (\psi^\circ)^\circ=\psi.  \]
We define for any $x,y\in\R^N$ the geodesic distance induced by $\psi,$ or $\psi-$distance in short, as
\[ \dist^\psi(x,y):=\inf \left\lbrace  \int_0^1 \psi^\circ(\gamma(t),\dot\gamma(t))\ud t \ :\ \gamma\in W^{1,1}([0,1];\R^N), \gamma(0)=x, \gamma(1)=y\right\rbrace. \]
We remark that this function is not symmetric in general. We  define the \textit{signed distance function} from a closed set $E\subseteq \R^N$ as 
\begin{equation}\label{def sd}
    \sd^\psi_E(x):=\inf_{y\in E}\dist^\psi(y,x)-\inf_{y\notin E}\dist^\psi(x,y),
\end{equation}
so that $\sd^\psi_E\ge 0$ on $E^c$ and $\sd^\psi_E\le 0$ in $E$. We remark that the bounds stated in Definition \ref{def anisotropy}  {imply}
\begin{equation}\label{inclusione psi palle}
    \dfrac{1}{c_\psi}\dist\le \dist^\psi\le c_\psi \dist,
\end{equation}
where here and in the following we will denote with $\dist,\sd$ the Euclidean distance and signed distance function respectively.  We define the $\psi-$balls as the balls associated to the $\psi-$distance, that is
\[ B^\psi_\rho(x):=\{ y\in\R^N : \text{dist}^\psi(y,x)<\rho  \}, \]
which in general are not convex nor symmetric. 

\begin{defin}\label{def reg ell integr}
 {We say that an anisotropy $\phi$ is a \textit{regular elliptic integrand}, and write $\phi\in \mathscr E$, if there exists two constants $\lambda\ge 1,\,l\ge 0$ such that} if $\phi(x,\cdot)\big\lvert_{ {S^{N-1}}}\in C^{2,1}(  {S^{N-1}})$ and for every $x,y,e\in\R^N,\, \nu,\nu'\in    {S^{N-1}}$ one has:
\begin{align*}
    \dfrac 1\lambda\le \phi(x&,\nu)\le \lambda, \\
    |\phi(x,\nu)-\phi(y,\nu)|+|\nabla_p& \phi(x,\nu)-\nabla_p \phi(y,\nu)|\le l|x-y|\\
    |\nabla_p\phi(x,\nu)|+\|\nabla_p^2 \phi(x,\nu)\|+ &\dfrac{\|\nabla_p^2 \phi(x,\nu)-\nabla_p^2 \phi(x,\nu')\|}{|\nu-\nu'|}\le \lambda\\
    e\cdot \nabla_p^2\phi(x,\nu)[e]&\ge \dfrac{|e-(e\cdot\nu)\nu|^2}{\lambda}.
\end{align*}
\end{defin} Given any set of finite perimeter $E$, one can define the  $\phi-$perimeter $\P$ as follows
\[ \P(E):=\int_{\bd^* E}\phi(x,\nu_E(x))\udH(x), \]
where $\bd^* E$ is the reduced boundary of $E$ and $\nu_E$ is the measure-theoretic outer normal, see \cite{Mag-book} for further references on sets of finite perimeter. The  {$\phi-$}perimeter of a set of finite perimeter $E$ in an open set $A$ is defined  as 
\[ \P(E;A):=\int_{\bd^* E\cap A}\phi(x,\nu_E(x))\udH(x). \]
We remark that, by definition of regular elliptic integrand, for any  set $E$  {of finite perimeter} it holds $$\frac 1\lambda P(E)\le P_\phi(E)\le \lambda P(E).$$ Some additional remarks on this definition can be found in \cite{DePMag1}. We just recall the submodularity property of the $\phi-$perimeter,  {which can be  proved for instance by using the formulae for the reduced boundary and measure-theoretic normal of union and intersection of sets of finite perimeter (see \cite{Mag-book})}.
\begin{prop}[Submodularity property]
    For any two sets  $E,F\subseteq\R^N$  of finite perimeter, one has
    \begin{equation}\label{submod}
         \P(E\cup F)+\P(E\cap F)\le \P(E)+\P(F).
    \end{equation}
\end{prop}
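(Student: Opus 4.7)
The plan is to reduce \eqref{submod} to a simple pointwise inequality, via the standard Federer-type decomposition of the reduced boundaries of $E\cup F$ and $E\cap F$ in terms of those of $E$ and $F$.

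First I would apply Federer's theorem and stratify $\R^N = E^{(0)}\cup E^{(1)}\cup \bd^* E$ (and analogously for $F$) up to $\mathcal{H}^{N-1}$-null sets, and further split $\bd^*E\cap \bd^*F = \Sigma^+\cup \Sigma^-$ with $\Sigma^\pm := \{x\in\bd^*E\cap\bd^*F:\nu_E(x)=\pm\nu_F(x)\}$; that these two pieces exhaust $\bd^*E\cap \bd^*F$ up to $\mathcal{H}^{N-1}$-null sets is a standard consequence of the rectifiability of reduced boundaries. Then I would invoke the classical formulae (see e.g.\ \cite{Mag-book}) describing the reduced boundary of a union and of an intersection: up to $\mathcal{H}^{N-1}$-null sets,
\begin{align*}
    \bd^*(E\cup F) &= (\bd^*E\cap F^{(0)})\cup(\bd^*F\cap E^{(0)})\cup \Sigma^+,\\
    \bd^*(E\cap F) &= (\bd^*E\cap F^{(1)})\cup(\bd^*F\cap E^{(1)})\cup \Sigma^+,
\end{align*}
and on each piece the outer unit normal coincides respectively with $\nu_E$, $\nu_F$, or the common value $\nu_E=\nu_F$.

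Next I would plug these decompositions into $\P(E\cup F)+\P(E\cap F)$ and analogously decompose $\P(E)+\P(F)$ along the refinement $\bd^*E = (\bd^*E\cap F^{(0)})\cup(\bd^*E\cap F^{(1)})\cup\Sigma^+\cup\Sigma^-$ (and symmetrically for $\bd^*F$). The integrals over $\bd^*E\cap F^{(0)}$, $\bd^*E\cap F^{(1)}$, $\bd^*F\cap E^{(0)}$, $\bd^*F\cap E^{(1)}$ then appear identically on both sides, and the integral over $\Sigma^+$ also contributes $2\int_{\Sigma^+}\phi(x,\nu_E)\udH$ to both sides (since $\nu_F=\nu_E$ there). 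The only surviving term is on the right-hand side, namely $\int_{\Sigma^-}[\phi(x,\nu_E)+\phi(x,-\nu_E)]\udH$, which is non-negative since $\phi\ge 0$. This proves \eqref{submod}.

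There is no genuine obstacle here: the argument is pure bookkeeping based on the Federer-type decompositions, and the anisotropy $\phi$ enters only through its non-negativity on $\Sigma^-$ (where in general one does not have $\phi(x,\nu_E)+\phi(x,-\nu_E)=0$, precisely because $\phi$ is not symmetric in $p$, so the inequality is typically strict as soon as $\mathcal{H}^{N-1}(\Sigma^-)>0$). The one point worth double-checking is the identification of the outer unit normal of the union and of the intersection on each stratum, which is immediate from the density characterization of the reduced boundary.
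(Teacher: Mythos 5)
Your proof is correct and fills in the details of exactly the approach the paper gestures at: the paper does not give a proof but remarks that submodularity follows from the Federer-type decomposition formulae for the reduced boundaries and measure-theoretic normals of unions and intersections (citing Maggi's book), which is precisely what you carry out, with the surplus term $\int_{\Sigma^-}\bigl[\phi(x,\nu_E)+\phi(x,-\nu_E)\bigr]\udH\ge 0$ accounting for the inequality.
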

Moreover, by homogeneity, \eqref{CS type ineq} and recalling that for any set  $E$ of finite perimeter it holds $D\chi_E=-\nu_E\udH{\big\lvert_{\bd^* E}}$ we have the following equivalent definitions
\begin{align}
    \P(E)&=\sup \left\lbrace \int_{\R^N}-D\chi_E\cdot \xi \ :\ \xi\in C^1_c(\R^N;\R^N), \phi^\circ(\cdot,\xi)\le 1 \right\rbrace\label{P via calibration}\\
    &=\sup \left\lbrace \int_{E}\div\, \xi\udH \ :\ \xi\in C^1_c(\R^N;\R^N), \phi^\circ(\cdot,\xi)\le 1 \right\rbrace\nonumber.
\end{align}
Concerning the regularity property of the $\phi-$perimeter minimizers, we refer to \cite{SchSimAlm}. We just recall the following results. Given two anisotropies $\phi,\psi\in\mathscr E$, we define the \virg{distance} between them as
\begin{align*}
    &\dist_{\mathscr E}(\phi,\psi):=\sup \{ |\phi(x,p)-\psi(x,p)|\\
    &+ |\nabla_p \phi(x,p)-\psi(x,p)|  +  |\nabla_p^2 \phi(x,p)-\nabla_p^2\psi(x,p)| : x\in\R^N, p\in  {S^{N-1}}\}, 
\end{align*}
where $|\cdot|$ denotes the Euclidian norm. Given $\phi\in\mathscr E$, we recall that $E$ is a $0-$minimizer for the $\phi-$perimeter if for any $x\in\R^N,r>0$
\[  P_\phi(E;B_r(x))\le P_\phi(F;B_r(x))  \]
for every $F\subset \R^N$ such that $F\triangle E\subset\joinrel\subset B_r$.  Then, some regularity properties of minimizers of $\phi-$perimeter can be found in  the theorems of part $II.7$ and $II.8$ in \cite{SchSimAlm}, which are recalled below.

\begin{teo}
    Assume $\phi\in \mathscr E$. Then,  for any 0-minimizer $E$ of the $\phi-$perimeter,  the reduced boundary $\bd^* E$ of the set $E$ is of class $C^{1,1/2}$ and the singular set $\Sigma:=\bd E\setminus \bd^* E$ satisfies
    \[  \mathcal H^{N-3}(\Sigma)=0. \]
\end{teo}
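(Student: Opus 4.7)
The plan is to follow the classical regularity program for elliptic integrands of Almgren--Schoen--Simon, adapting it to the $x$-dependent anisotropy $\phi \in \mathscr E$. Since this is a well-known result that the paper merely recalls, the role of the proof is to sketch how the $x$-dependence is absorbed perturbatively as a lower-order term.

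First, I would establish uniform density estimates for any $0$-minimizer $E$: for every $x\in \partial E$ and every sufficiently small $r>0$,
\begin{equation*}
    c\,r^{N-1}\le \mathcal H^{N-1}(\partial^* E\cap B_r(x))\le C\,r^{N-1},
\end{equation*}
where $c,C$ depend only on $\lambda$. The upper bound comes from comparing with $E\setminus B_r(x)$ and using the ellipticity bound $\frac 1\lambda P\le P_\phi\le \lambda P$; the lower bound from the isoperimetric inequality combined with a standard ODE argument on $|E\cap B_r|$. One also obtains compactness of $0$-minimizers under $L^1_{\mathrm{loc}}$-convergence.

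Second, and this is the main analytic step, I would prove an $\varepsilon$-regularity theorem stating that if the spherical excess
\[
\mathbf{e}(x_0,r,\nu)=\tfrac1{r^{N-1}}\int_{\partial^*E\cap B_r(x_0)} \bigl(\phi(x,\nu_E)-\phi(x,\nu)+\nabla_p\phi(x,\nu)\cdot(\nu-\nu_E)\bigr)\,\mathrm d\mathcal H^{N-1}
\]
is small enough, then $\partial^*E\cap B_{r/2}(x_0)$ is the graph of a $C^{1,1/2}$ function. The standard route is: (i) prove a Caccioppoli-type inequality using the variational formulation \eqref{P via calibration} and a suitable calibration; (ii) show a Lipschitz approximation lemma on the level of excess; (iii) harmonic approximation — freezing the coefficient $\nabla_p^2\phi(x_0,\nu)$, the excess function is close to the solution of the linear elliptic equation $\nabla_p^2\phi(x_0,\nu)\nabla u\cdot\nabla\,\cdot =0$, which enjoys classical $C^{1,\alpha}$ estimates thanks to the ellipticity of Definition \ref{def reg ell integr}; (iv) iterate the resulting excess decay. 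The $x$-dependence contributes an additive error of order $l\,r$ to each step, which is absorbed into the iteration. The Hölder exponent $1/2$ reflects both the $C^{2,1}$ regularity of $\phi(x,\cdot)$ and the Lipschitz dependence of $\nabla_p\phi$ on $x$.

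Third, to bound the singular set, I would apply Federer's dimension-reduction argument. Blow-ups of $E$ at any $x_0$ are $0$-minimizers of the frozen integrand $\phi(x_0,\cdot)$, which is now translationally invariant; the corresponding tangent cones are minimizing cones for a smooth uniformly elliptic integrand. Classical results on minimizing cones in low dimensions (no singular minimizing cones in $\R^k$ for $k\le 2$ under the hypotheses of $\mathscr E$) together with the $\varepsilon$-regularity theorem give $\mathcal H^{N-3}(\Sigma)=0$. The main obstacle is really the $\varepsilon$-regularity step: one must carefully distinguish intrinsic excess from the $x$-dependent perturbation and verify that the iteration closes despite the loss of translation invariance, but this is by now standard in the Schoen--Simon--Almgren framework.
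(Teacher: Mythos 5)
The paper does not prove this theorem: it is recalled verbatim from parts II.7--II.8 of Schoen--Simon--Almgren \cite{SchSimAlm}, as the paragraph immediately preceding the statement says. Your sketch faithfully reproduces the architecture of that work---density estimates, an $\varepsilon$-regularity theorem via excess decay (Caccioppoli inequality, Lipschitz approximation, harmonic comparison with the frozen-coefficient linear elliptic operator $\nabla^2_p\phi(x_0,\nu)$), and Federer dimension reduction---and you correctly identify the $x$-dependence as a lower-order perturbation absorbed into the iteration through the Lipschitz modulus $l$ of Definition~\ref{def reg ell integr}. The excess you write down is the right Bregman-type quantity.

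There is, however, an off-by-one error in the dimension-reduction step. Ruling out singular minimizing cones in $\R^k$ for $k\le 2$ is trivially true (codimension-one cones in the plane are lines) and, by Federer's argument, yields only $\dim\Sigma\le N-3$, hence $\mathcal H^{N-2}(\Sigma)=0$; it does not give $\mathcal H^{N-3}(\Sigma)=0$, since a set of Hausdorff dimension exactly $N-3$ may have positive (even infinite) $\mathcal H^{N-3}$-measure. To reach the stated conclusion one must show that there are no singular minimizing $2$-cones in $\R^3$ for the frozen integrand $\phi(x_0,\cdot)$, i.e.\ that a minimizing cone over a closed curve in $S^2$ is a plane. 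This is the genuinely nontrivial rigidity input of \cite{SchSimAlm}, and it is precisely where the $C^{2,1}$ smoothness and uniform ellipticity of Definition~\ref{def reg ell integr} are essential (for crystalline anisotropies, $2$-cones in $\R^3$ really can be singular). Replacing ``$k\le 2$'' by ``$k\le 3$'' corrects the sketch; then $\dim\Sigma\le N-4$ and $\mathcal H^{N-3}(\Sigma)=0$ follows.
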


\begin{teo}\label{teo reg ellipt integr}
   Let $m>0,\alpha\in (0,1)$. Then, there exists $\e=\e(m,\alpha)>0$ with the following property:  let $\phi=\phi(p)\in\mathscr E$, $\phi\in C^{3,\alpha}(\R^N\setminus \{0\})$ with 
   \[ \| \phi|_{S^{N-1}} \|_{C^{3,\alpha}}\le m\ \text{and } \dist_{\mathscr E}(\phi,|\cdot|)\le \e.  \]
    Then, for any 0-minimizer $E$ of the $\phi-$perimeter,  the reduced boundary $\bd^* E$ of the set $E$ is of class $C^{1,1/2}$ and the singular set $\Sigma:=\bd E\setminus \bd^* E$ satisfies
    \[  \mathcal H^{N-7}(\Sigma)=0. \]
\end{teo}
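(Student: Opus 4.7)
\begin{dim*}
The plan is to follow the by-now-classical Schoen--Simon--Almgren regularity strategy, which here proceeds in three steps; the main work (over the Euclidean area case) lies in a perturbative classification of low-dimensional minimizing cones. First, I would establish an $\varepsilon$-regularity theorem: there exists $\varepsilon_0 = \varepsilon_0(m,\alpha,N) > 0$ such that if $E$ is a $0$-minimizer of $P_\phi$ in $B_r(x_0)$ whose spherical excess
\[
\mathbf{e}(E, x_0, r) := r^{1-N} \int_{\bd^* E \cap B_r(x_0)} |\nu_E - e|^2 \udH
\]
is smaller than $\varepsilon_0$ for some $e \in S^{N-1}$, then $\bd^* E \cap B_{r/2}(x_0)$ is a $C^{1,1/2}$ graph over $e^\perp$. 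This is the standard excess-decay argument: since $\phi \in \mathscr E$ the Euler--Lagrange operator linearized at a flat configuration is uniformly elliptic with Hölder coefficients, comparison with solutions of the linearized problem yields power-like excess decay, and a Campanato-type summation produces the exponent $1/2$.

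The heart of the proof is then the perturbative statement that, for $\varepsilon$ small enough and every $k \le 7$, every $0$-minimizer $C \subset \R^k$ of $P_\phi$ which is a cone with vertex at the origin must be a half-space. I would argue by compactness and contradiction: assume a sequence $\phi_j \in \mathscr E$ with $\dist_{\mathscr E}(\phi_j, |\cdot|) \to 0$ and corresponding $\phi_j$-minimizing cones $C_j \subset \R^k$ which are not half-spaces. Uniform density and perimeter estimates (coming from the uniform ellipticity of $\phi_j \in \mathscr E$) yield compactness in $L^1_{\mathrm{loc}}$ of $C_j$ to a limit $C_\infty$, and the convergence of the functionals $P_{\phi_j} \to P$ (in a Mosco/$\Gamma$-limit sense) shows that $C_\infty$ is area-minimizing. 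Applying Allard regularity to the $C_j$ near the regular part of $C_\infty$, together with the monotonicity formula, one checks that $C_\infty$ is itself singular at the origin, contradicting the classical Bernstein-type theorem (De Giorgi--Fleming--Almgren--Simons) ruling out singular area-minimizing cones in $\R^k$ for $k \le 7$. This step is the main obstacle: the delicate point is the persistence of the singularity in the limit, which uses crucially that $\phi = \phi(p)$ is $x$-independent so that all limits remain $0$-minimizers of the corresponding limiting functional.

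Finally, given $x_0 \in \Sigma := \bd E \setminus \bd^* E$, the blow-ups $E_r := (E - x_0)/r$ subconverge in $L^1_{\mathrm{loc}}$, by the uniform perimeter bounds and the translation- and scaling-invariance of $P_\phi$ (both coming from $\phi = \phi(p)$ and its positive $1$-homogeneity in $p$), to a $0$-minimizing cone $C$ of $P_\phi$. Combining this with the previous two steps, Federer's standard dimension-reduction lemma produces the bound $\dim_H \Sigma \le N - 8$, hence $\mathcal{H}^{N-7}(\Sigma) = 0$; the $C^{1,1/2}$ regularity of $\bd^* E$ is an immediate consequence of the $\varepsilon$-regularity step, since reduced boundary points have vanishing excess at small scales.
\end{dim*}
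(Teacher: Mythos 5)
The paper does not prove this theorem: it is quoted verbatim from the Schoen--Simon--Almgren paper \cite{SchSimAlm} (parts II.7 and II.8), as the sentence immediately preceding the statement says. There is therefore no internal proof against which to compare your argument.

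That said, your sketch is a faithful high-level summary of the Schoen--Simon--Almgren strategy that the cited reference actually carries out: (i) an excess-decay $\varepsilon$-regularity theorem yielding $C^{1,1/2}$ graphicality where the spherical excess is small, (ii) the perturbative Bernstein-type step classifying $0$-minimizing cones of parametric elliptic integrands $C^{3,\alpha}$-close to the area integrand in $\R^k$, $k\le 7$, as half-spaces (this is where the hypotheses $\|\phi|_{S^{N-1}}\|_{C^{3,\alpha}}\le m$ and $\dist_{\mathscr E}(\phi,|\cdot|)\le\e$ enter), and (iii) Federer dimension reduction via blow-ups, which is legitimate precisely because $\phi=\phi(p)$ makes $P_\phi$ translation- and scale-invariant. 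One cosmetic imprecision: the persistence of the singularity in the compactness argument of step (ii) is more naturally phrased as a contrapositive of your step (i) (if $C_\infty$ were a half-space, the $C_j$ would eventually have small excess, hence be regular cones, hence half-spaces) rather than via ``Allard regularity,'' which is a statement about stationary varifolds and not directly applicable to anisotropic minimizers; but the structure of the argument is otherwise the standard one and the conclusion $\dim_H\Sigma\le N-8$, which implies $\mathcal H^{N-7}(\Sigma)=0$, follows.
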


We sum up these hypotheses that yield the complete regularity of minimizers of parametric elliptic integrands: 
\begin{equation}
    \begin{split}\label{hp dim}
        &\text{either }  \phi\in \mathscr E \text{ and }N\le 3,\\
        &\text{or } N\le 7  \text{ and the hypotheses of Theorem \ref{teo reg ellipt integr} are satisfied}.  
    \end{split}
\end{equation}

\subsection{The first variation of the $\phi-$perimeter}

In this section we compute the first variation of the $\phi$-perimeter and define some additional operators associated to it.

Assume $E$ is of class $C^2$. Let $X$ be a  {smooth and compactly supported} vector field and assume $\Psi(x,t)=:\Psi_t(x)$ is the associated	 flow. To simplify the notation, we write
\[ \nu(x,t)=\nabla_x \sd_{\Psi(E,t)}(x). \]
By classical formulae (see e.g. \cite{CagMorMor}) we can compute the following. For the sake of brevity, we avoid writing the evaluation $\phi=\phi(x,\nu_E(x)) 
$, if not otherwise specified, and assume that all the integrals are made with respect to the Hausdorff $(N-1)$-dimensional measure $\mathcal H^{N-1}.$
\begin{align}
    &\dfrac{\ud}{\ud t}\Big\lvert_{t=0} P_\phi(E_t)
    = \dfrac \ud{\ud t}\Big\lvert_{t=0}    \int_{\bd E} \phi(\Psi_t(x),\nu(\Psi_t(x),t)) J\Psi_t\nonumber\\
    &=\int_{\bd E} \nabla_x\phi \cdot X + \nabla_p \phi \cdot \left( -\nabla_{\tau} (X\cdot \nu)+D\nu[X]  \right)+\phi\, \div_\tau X  \label{eq align}\\
    &= \int_{\bd E} \nabla_x\phi \cdot X + \nabla_p \phi \cdot \left( -\nabla_{\tau} (X\cdot \nu)+D\nu[X]  \right) + \div_\tau (\phi X) - \nabla\phi\cdot X + (\nabla\phi\cdot \nu)(X\cdot \nu)\nonumber\\
    &=\int_{\bd E} \nabla_x\phi \cdot X + \nabla_p \phi \cdot \left( -\nabla_{\tau} (X\cdot \nu)+D\nu[X]  \right) -	\nabla_x\phi\cdot X -D\nu[\nabla_p \phi] \cdot X \nonumber\\
    &\quad +\div_\tau (\phi X) + (\nabla\phi\cdot \nu)(X\cdot \nu)\nonumber\\
    &=\int_{\bd E} -\nabla_p \phi \cdot \nabla_\tau (X\cdot \nu) + (\nabla_x\phi\cdot \nu)(X\cdot \nu) + \left( D\nu[	\nabla_p \phi]\cdot \nu \right) (X\cdot \nu) +\div_\tau (\phi X)\nonumber\\
    &=\int_{\bd E} \div_\tau \left(\nabla_p \phi(X\cdot \nu)\right)-\nabla_p \phi \cdot \nabla_\tau (X\cdot \nu)  + (X\cdot \nu)(\nabla_x \phi\cdot \nu)\nonumber\\
    &=\int_{\bd E} (\div_\tau \nabla_p \phi)(X\cdot \nu)+\nabla_p \phi\cdot \nabla_\tau (X\cdot \nu) - \nabla_p \phi\cdot \nabla_\tau (X\cdot \nu) + (\nabla_x\phi\cdot \nu)(X\cdot \nu)\nonumber\\
    &=\int_{\bd E} (X\cdot \nu) \left( \div_\tau \nabla_p \phi + \nabla_x \phi\cdot 	\nu \right)=\int_{\bd E}(X\cdot \nu ) \,\div \nabla_p \phi\nonumber
\end{align}
where the last equality follows from the definition of $\div_\tau$ and the fact that $\phi$ is $1-$homogeneous with respect to the $p$ variable, since
\begin{align*}
    \div \nabla_p \phi&=\div_\tau \nabla_p \phi +\sum_i \nu_i  \left( {\bd_{x_i}}\nabla_p \phi\right)[\nu]\\
    &=\div_\tau \nabla_p \phi +\sum_i \nu_i \nabla_p (\bd_{x_i}\phi)\cdot \nu +\nu\cdot \left(  \nabla^2_p \phi D\nu  \right)[\nu]\\
    &=\div_\tau \nabla_p \phi +\nabla_x \phi\cdot \nu.
\end{align*}
Therefore, we define the first variation of a $C^2-$regular set $E$, induced by the vector field $X$, as 
\begin{equation}\label{first variation perimeter}
    \delta P_\phi (E)[X\cdot \nu]:=
    \int_{\bd E}(X(x)\cdot \nu(x) ) \,\div \nabla_p \phi(x,\nu(x))\udH(x)
\end{equation} 
and the $\phi-$curvature of the set $E$ as
\begin{equation}
    \label{def curvature}
    H^\phi_E(x):=
    \div \nabla_p \phi(x,\nu(x)).
\end{equation}
If we now consider equation \eqref{eq align}, we develop the tangential gradient to find 
\begin{align*}
    \nabla_p\phi \cdot (-\nabla_\tau (X\cdot \nu)+D\nu[X])=\nabla_p\phi \cdot (-\nabla_\tau X[\nu]-D\nu[X] +D\nu[X])=0.
\end{align*}
This shows that for any set $E$ of class $C^2$  it holds 
\begin{equation*}
    \delta P_\phi (E)[X\cdot \nu]:=\int_{\bd E}\left(\nabla_x \phi \cdot X+ \phi\, \div_\tau X\right) \udH,
\end{equation*} 
where we dropped the evaluation of $\phi$ at $(x,\nu_E(x))$. We remark that the expression on the right hand side makes sense even if the set $E$ is just of finite perimeter. Defining the $\phi-$divergence operator $\div_{\phi} $ as 
\begin{align}
    \div_{\phi} X:=\nabla_x \phi\cdot X +\phi\,\div_\tau X,
\end{align}
we are led to define the distributional $\phi-$curvature of a set $E$ of finite perimeter  as an operator $H^\phi_E\in L^1(\bd E)$ (if it exists) such that the following representation formula holds
\begin{equation}
    \label{def curvature variational}
    \int_{\bd E} \div_{\phi} X \udH = \int_{\bd E} H^\phi_E \,\nu_E\cdot X\udH,\qquad \forall X\in C^\infty_c(\R^N;\R^N).
\end{equation}
The previous computations allow to say that the distributional $\phi-$curvature can be expressed as \eqref{def curvature} if the set is of class $C^2$. Finally, since $\phi$ is a regular elliptic integrand, one can prove the following monotonicity result.

\begin{lemma}
    Let $E,F$ be two $C^2$ sets of finite $\phi-$perimeter  with $E\subseteq F$, and assume that $x\in \bd F\cap \bd E$: then $H^\phi_F(x)\le H^\phi_E(x).$
\end{lemma}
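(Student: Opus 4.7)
The plan is to reduce the inequality at the touching point $x$ to a trace inequality between symmetric positive semidefinite matrices, and then invoke the ellipticity of $\phi$. First I would show that $\nu_E(x)=\nu_F(x)=:\nu$: since both sets are $C^2$ and $E\subseteq F$ with $x\in \bd E\cap\bd F$, any tangent direction $v\in T_x\bd E$ transverse to $T_x\bd F$ would yield, via a $C^2$ curve in $\bd E$, points of $\overline{E}\subseteq\overline{F}$ lying on the outer side of $\bd F$, a contradiction; hence $T_x\bd E=T_x\bd F$, and the orientation $\nu_E(x)=-\nu_F(x)$ is excluded because it would place $E$ locally on the $F^c$ side of this common tangent plane.

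Second, I would fix coordinates with $x=0$, $\nu=e_N$, and write $\bd E$, $\bd F$ locally as graphs $\{y_N=g_E(y')\}$ and $\{y_N=g_F(y')\}$ with $g_E(0)=g_F(0)=0$ and $\nabla g_E(0)=\nabla g_F(0)=0$, where $E=\{y_N<g_E(y')\}$ and $F=\{y_N<g_F(y')\}$ locally. The inclusion $E\subseteq F$ forces $g_E\le g_F$ near $0$, so the nonnegative function $g_F-g_E$ has a local minimum at $0$ with vanishing gradient, yielding
\[
A:=D^2 g_F(0)-D^2 g_E(0)\succeq 0 \quad\text{on } \R^{N-1}.
\]

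Third, I would compute $H^\phi_E(0)-H^\phi_F(0)$ by using the decomposition $H^\phi=\div_\tau\nabla_p\phi+\nabla_x\phi\cdot\nu$ already derived in the paper. The term $\nabla_x\phi(0,e_N)\cdot e_N$ is common to both sets, and so is the piece of $\div_\tau\nabla_p\phi(\cdot,\nu(\cdot))|_0$ obtained by differentiating $\nabla_p\phi$ in its $x$-slot, since that piece depends only on $\phi$ evaluated at $(0,e_N)$ and on the common tangent frame. The only contribution in which $E$ and $F$ differ comes from $\sum_{i=1}^{N-1} e_i\cdot\nabla_p^2\phi(0,e_N)[\partial_{e_i}\nu|_0]$; for a graph with vanishing gradient at $0$ one has $\partial_{e_i}\nu|_0=-\sum_j g_{ij}(0)\,e_j$, whence
\[
H^\phi_E(0)-H^\phi_F(0)=\mathrm{tr}(B\,A),
\]
with $B$ the restriction of $\nabla_p^2\phi(0,e_N)$ to the tangent hyperplane $\{e_N\}^{\perp}$. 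The ellipticity in Definition \ref{def reg ell integr} forces $B\succeq \lambda^{-1}I_{N-1}$, and since $A$ and $B$ are symmetric positive semidefinite matrices, $\mathrm{tr}(BA)\ge 0$, which is the claim.

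The main obstacle will be the bookkeeping in the third step---separating precisely which pieces of $\div\nabla_p\phi$ depend on the second-order jet of the graph from those determined only by the zeroth- and first-order data at $x$. The tangential/normal decomposition and the homogeneity identities $p\cdot\nabla_p\phi=\phi$ and $\nabla_p^2\phi\cdot p=0$ already exploited in the first-variation calculation make this step manageable; everything else is linear algebra.
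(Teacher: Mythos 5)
Your proof is correct and takes essentially the same route as the paper's: expand $H^\phi_E - H^\phi_F$ at the touching point, note the $x$-derivative piece of $\div\nabla_p\phi$ is common because $\nu_E(x)=\nu_F(x)$, and reduce the remaining second-order difference to $\operatorname{tr}(BA)\ge 0$ with $A=D^2g_F(0)-D^2g_E(0)\succeq 0$ and $B$ the tangential block of $\nabla_p^2\phi(0,\nu)$. The paper simply asserts ``$D\nu_F\le D\nu_E$ (as matrices)'' and appeals to the classical $x$-independent case, whereas you make the graph-based trace computation explicit; the two are the same argument at different levels of detail.
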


\begin{proof}
    Since the anisotropy is smooth, we can expand the curvature formula \eqref{def curvature} as 
    \begin{equation}
        \label{curvature expanded}
        H^\phi=\text{tr}\left( \nabla_x\nabla_p \phi(x,\nu) +\nabla^2_p \phi(x,\nu) D\nu \right)
    \end{equation}
    and compare $H^\phi_E$ with $H^\phi_F$. We consider separately the two terms appearing in \eqref{curvature expanded}. The first one depends on $\nu$ just by the value it has at the point $x$. Therefore, since $\nu_E(x)=\nu_F(x)$ we have the equality. The second one falls in the classical framework of smooth anisotropies that do not depend on the space variable.  {Since $D\nu_F\le D \nu_E$ (as matrices) one concludes the proof.}
\end{proof}

\section{The minimizing movements approach}
In this section we follow the work of \cite{MugSeiSpa} (see also \cite{AlmTayWan,LucStu}) to prove the existence for the  mean curvature flow via the \textit{minimizing movements} approach. We recall that in the whole paper we will assume the hypothesis \eqref{standing hp}.

\subsection{The discrete scheme}
In this subsection we will define the discrete scheme approximating the weak solution of the mean curvature flow, and we shall study some of its properties.

We define the following iterative scheme. Given $h>0, f\in L^\infty(\R^N\times [0,\infty))$ and $t\ge h$, and given a bounded set of finite perimeter $F$, we minimize the energy functional 
\begin{equation}\label{problema discreto}
    \mathscr F_{h,t}^F(E)=P_\phi(E) + \dfrac 1h \int_{E} \sd^\psi_F(x)\ud x - \int_{E} F_h(x,t)\ud x
\end{equation}
in the class of all measurable sets $E\subseteq \R^N$, and where we have set 
\[ F_h(x,t):=\fint_{t}^{t+h} f(x,s)\ud s. \]
Equivalently, we could define the energy functional as 
\[ \mathscr F_{h,t}^F(E)=P_\phi(E) + \dfrac 1h \int_{E\triangle F} |\sd^\psi_{F}| - \int_{E} F_h(x,t)\ud x , \]
which agrees with \eqref{problema discreto} up to a constant. Then, we denote 
\[ T_{h,t} F=E\in\text{argmin}\ \mathscr F_{h,t}^F.\]
 {We will refer to this minimizing procedure as the \textit{incremental problem}.} It is well-known (compare \eqref{first variation perimeter} and \cite[Proposition 17.8]{Mag-book}) that a minimimum of \eqref{problema discreto} of class $C^2$ satisfies the Euler-Lagrange equation
\begin{equation}
    \label{EL equation}
    \int_{\bd  {E}} H^\phi_{ {E}} X\cdot \nu_{ {E}}\udH = -\int_{\bd  {E}}\left( \frac 1h \sd^\psi_{ {F}}(x)-F_h(x,t)  \right)X(x)\cdot \nu_{ {E}}(x)\udH(x)
\end{equation}
for all $X\in C^\infty_c(\R^N;\R^N)$. We can then define the \textit{discrete flow}, which can be seen as a discrete-in-time approximation of the mean curvature flow starting from the initial set $E_0$.  {We define iteratively the \textit{discrete flow} by setting $E_t\h=E_0$ for $t\in [0,h)$ and
\begin{equation}\label{def discrete flow}
    E_t^{(h)}=T_{h,t-h} E_{t-h}^{(h)}  =  T_{h,([\frac th]-1)h} E_{t-h}^{(h)} ,\qquad t\in[h,+\infty),
\end{equation}
where $[\cdot]$ denotes the integer part of a real number.} This section is devoted to recall and prove some estimates on the discrete flow. The first one is a well-known existence result.
\begin{lemma*}
    For any  measurable function $g:\R^N\to\R$ such that $\min\{g,0\}\in L^1_{loc},$ the problem 
    \[ \min \left\lbrace P(E)+\int_E g\ :\ E \text{ is of finite perimeter} \right\rbrace \]
    admits a solution.
\end{lemma*}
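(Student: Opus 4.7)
I would apply the direct method of the calculus of variations. Set $\mathcal{E}(E) := P(E) + \int_E g$ and $m := \inf \mathcal{E}$; since $\emptyset$ is admissible with $\mathcal{E}(\emptyset) = 0$, we have $m \le 0$, and we may assume $m > -\infty$ (else there is nothing to prove). Pick a minimizing sequence $\{E_n\}$ with $\mathcal{E}(E_n) \le m + 1/n$.

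The first and main step is to extract equi-boundedness and tightness of $\{\chi_{E_n}\}$. In the intended use (the incremental problem \eqref{problema discreto}), $g(x) = \frac{1}{h}\sd^\psi_F(x) - F_h(x,t)$ with $F$ bounded and $|F_h| \le \|f\|_\infty$; using \eqref{inclusione psi palle} one gets $g(x) \ge \frac{|x|}{c_\psi h} - C$ for $|x|$ large. From this coercivity together with the energy bound $\mathcal{E}(E_n) \le m+1$ and the hypothesis $g^-\in L^1_{loc}$ (so $\int_{B_R} g^- < \infty$ for every $R$), one deduces both a uniform bound $\sup_n P(E_n) < \infty$ and tightness: splitting $\int_{E_n} g$ over $B_R$ and its complement, the coercive tail yields $|E_n\cap B_R^c| \le \frac{c}{R}$ for $R$ large, uniformly in $n$.

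Granted these bounds, the compactness theorem for sets of finite perimeter furnishes, up to a subsequence, $\chi_{E_{n_k}} \to \chi_E$ in $L^1(\R^N)$ and a.e., with $E$ of finite perimeter and supported in some large ball. To conclude I would pass to the limit: the $L^1$-lower semicontinuity of $P$ gives $P(E) \le \liminf_k P(E_{n_k})$; Fatou applied to $g^+\chi_{E_{n_k}}$ gives $\int_E g^+ \le \liminf_k \int_{E_{n_k}} g^+$; dominated convergence, with dominating function $g^-\chi_{B_R}\in L^1$, gives $\int_{E_{n_k}} g^- \to \int_E g^-$. Adding the three inequalities,
\[ \mathcal{E}(E) \;\le\; \liminf_k \mathcal{E}(E_{n_k}) \;=\; m, \]
so $E$ realizes the minimum.

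The main delicate point is the a priori confinement of a minimizing sequence: the bare assumption $\min\{g,0\}\in L^1_{loc}$ does not by itself guarantee $m > -\infty$ or tightness, and the argument really leans on the coercive growth of $g$ at infinity built into the signed distance term of \eqref{problema discreto}. Once confinement is in hand, the remainder is a routine lower-semicontinuity plus Fatou argument.
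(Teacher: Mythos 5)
The paper does not actually prove this lemma: it is stated as a ``well-known existence result'' with no argument, and the discussion immediately afterwards only verifies that the relevant $g$ in the application satisfies $\min\{g,0\}\in L^1$ before deferring to \cite[Proposition 6.1]{ChaMorPon15} for the subsequent lemma. So there is no proof in the paper to compare yours against; I will instead assess your argument on its own merits.

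You correctly reach for the direct method and, more importantly, you correctly spot that the hypothesis as literally written is too weak: $g\equiv -1$ has $\min\{g,0\}\in L^1_{loc}$ but $P(B_R)+\int_{B_R}g\to-\infty$, so the problem has no minimizer. That observation is the right one and is worth making explicit. However, two aspects of your write-up are off. First, the clause ``else there is nothing to prove'' is not right: if $m=-\infty$ the lemma is simply \emph{false}, not vacuously true, so that case cannot be dismissed. Second, you spend most of the proof building tightness (confinement) of the minimizing sequence out of the coercivity of the specific $g$ from \eqref{problema discreto}, but tightness is not what the direct method needs here, and a proof that only works for that one $g$ is not a proof of the lemma. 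The clean hypothesis is $g^-:=-\min\{g,0\}\in L^1(\R^N)$ --- which is exactly the condition the paper itself checks in the sentence after the lemma --- and with it the argument closes without any confinement: from $\mathcal E(E)\ge -\|g^-\|_{L^1}$ one gets $m>-\infty$ and a uniform bound $P(E_n)\le m+1+\|g^-\|_{L^1}$; the compactness theorem for sets of locally finite perimeter then yields $\chi_{E_{n_k}}\to\chi_E$ in $L^1_{loc}$ and a.e.\ along a subsequence; $P(E)\le\liminf P(E_{n_k})$ by the duality formula for the perimeter, which only tests against compactly supported vector fields; Fatou handles $\int_E g^+\le\liminf\int_{E_{n_k}}g^+$; and dominated convergence with dominating function $g^-\in L^1(\R^N)$ gives $\int_{E_{n_k}}g^-\to\int_E g^-$. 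No $L^1(\R^N)$ convergence of $\chi_{E_n}$, no volume bound, and no coercivity of $g$ are required. In short: replace $L^1_{loc}$ by $L^1$ in the hypothesis (the paper's own application already supplies this), drop the tightness step, and the remaining lower-semicontinuity reasoning you sketched is exactly right.
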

Consider now $F$ as a bounded set of finite perimeter. Then, the function $g=\sd^\psi_F/h-F_h$ is coercive, thus $\min\{ g,0 \}\in L^1$. Therefore, by the previous result and by classical arguments see \cite[Proposition 6.1]{ChaMorPon15} for a proof, one can prove the following result.
\begin{lemma}\label{existence discrete pb}
    For any given set $F$ of finite perimeter, the problem \eqref{problema discreto} admits a solution $E$, which satisfies the discrete dissipation inequality 
    \begin{equation*}
        P_\phi(E)+\dfrac 1h \int_{E\triangle F}|\sd^\psi_F| \le P_\phi(F) + \int_{E\setminus F}F_h(x,t)\ud x  - \int_{F\setminus E}F_h(x,t)\ud x .
    \end{equation*}
    Moreover, the problem \eqref{problema discreto} admits a minimal and a maximal solution.
\end{lemma}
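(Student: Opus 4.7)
\emph{Proof proposal.} The plan is standard direct method plus submodularity. First I would verify that the functional $\mathscr F_{h,t}^F$ admits minimizers over all measurable sets by quoting the unnamed Lemma stated just before: since $F$ is bounded, the function $g(x) = \sd^\psi_F(x)/h - F_h(x,t)$ satisfies $g(x) \ge |x|/(c_\psi h) - C$ for $|x|$ large (by \eqref{inclusione psi palle}) and in particular $\min\{g,0\}\in L^1(\R^N)$. I would also use this coercivity to localize: any minimizing sequence $\{E_n\}$ must be contained in some fixed large ball $B_R$, otherwise the integral $\int_{E_n} g$ would blow up. Then $P_\phi(E_n) \le \lambda P(E_n)$ is uniformly bounded by comparison with $F$, standard $BV$-compactness yields $E_n \to E$ in $L^1$ (up to subsequence), and lower semicontinuity of $P_\phi$ (via the calibration formula \eqref{P via calibration}) together with Lebesgue dominated convergence on the bounded region deliver $\mathscr F_{h,t}^F(E)\le \liminf_n \mathscr F_{h,t}^F(E_n)$, so $E$ is a minimizer.

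Next, to get the dissipation inequality, I would simply test the minimality of $E$ against the competitor $F$ itself: $\mathscr F_{h,t}^F(E)\le \mathscr F_{h,t}^F(F)$. Writing out both sides and splitting
\[
\int_E \sd^\psi_F - \int_F \sd^\psi_F = \int_{E\setminus F}\sd^\psi_F - \int_{F\setminus E} \sd^\psi_F,
\]
I would then use $\sd^\psi_F\ge 0$ on $F^c$ and $\sd^\psi_F\le 0$ on $F$ to rewrite $\int_{E\setminus F}\sd^\psi_F = \int_{E\setminus F}|\sd^\psi_F|$ and $-\int_{F\setminus E}\sd^\psi_F = \int_{F\setminus E}|\sd^\psi_F|$. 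Rearranging yields exactly the stated inequality. This part is routine once one sees which competitor to choose.

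The step I expect to need most care is the existence of minimal and maximal solutions. The key tool is the submodularity \eqref{submod}, coupled with the exact additivity
\[
\int_{E_1\cup E_2} g + \int_{E_1\cap E_2} g = \int_{E_1}g + \int_{E_2}g
\]
valid for any integrand. These together show that if $E_1,E_2$ are both minimizers with common value $m$, then
\[
\mathscr F_{h,t}^F(E_1\cup E_2) + \mathscr F_{h,t}^F(E_1\cap E_2) \le \mathscr F_{h,t}^F(E_1)+\mathscr F_{h,t}^F(E_2) = 2m,
\]
so $E_1\cup E_2$ and $E_1\cap E_2$ are both minimizers. To produce a minimal solution I would let $v_* := \inf\{|E| : E \text{ minimizer}\}$ (finite since minimizers are contained in a fixed ball) and pick minimizers $E_n$ with $|E_n|\to v_*$. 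Replacing $E_n$ by $\widetilde E_n := E_1\cap\cdots\cap E_n$, which is still a minimizer by an inductive application of the above, I obtain a decreasing sequence. Its intersection $E_-:=\bigcap_n \widetilde E_n$ is an $L^1$-limit of minimizers, hence a minimizer by the lower semicontinuity established in the first step, and has volume $v_*$. Any minimizer $E'$ satisfies $|E_-\cap E'|\ge v_*$, which forces $E_-\subseteq E'$ up to negligible sets, so $E_-$ is minimal. The construction of the maximal solution is symmetric, using unions and $\sup\{|E|\}$ (again bounded thanks to the localization inside $B_R$). The main subtlety is to ensure the sup/inf of volumes is attained and that the candidate is indeed contained in, or contains, every other minimizer: both follow from the submodular splitting argument applied once more.
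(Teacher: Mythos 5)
Your proof is correct and follows essentially the same route the paper implicitly endorses: the paper does not give a detailed argument but refers to \cite[Proposition~6.1]{ChaMorPon15}, whose reasoning (direct method with coercivity/lower semicontinuity for existence, testing minimality against $F$ for the dissipation inequality, and submodularity together with nested intersections/unions for the minimal/maximal solutions) is exactly what you reconstruct. The only point worth tightening is your localization claim: a minimizing sequence need not literally lie in a fixed ball, and the cleaner statement is that for $R$ large one may replace $E_n$ by $E_n\cap B_R$ without increasing the energy (cf.\ the truncation argument in Remark~\ref{evolution bounded sets}), which is enough to guarantee that the infimum and supremum of minimizer volumes are finite and that your countable intersection/union argument applies.
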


We define $T_{h,t}^+ F $ (respectively $T_{h,t}^- F$) as the maximal (respectively minimal) solution to \eqref{problema discreto} having as initial datum $F$. In the following, whenever no confusion is possible, we shall write $T_{h,t}$ instead of $T_{h,t}^-$.

A  comparison result holds. We will consider just bounded sets as datum for the problem \eqref{problema discreto}, but the same result holds in general for unbounded sets (see also Section \ref{sect evolution unbounded sets} for the case of unbounded sets with bounded boundary). The proof of this result is classical (see e.g. \cite{ChaMorPon15}) and it is based on the submodularity of the perimeter \eqref{submod}. We will omit it.
\begin{lemma}[Weak comparison principle]\label{comparison principle}
    Assume that $F_1,F_2$ are bounded sets with $F_1 {\subset\joinrel\subset} F_2$ and consider $g_1,g_2\in L^\infty$ with $g_1\ge g_2$. Then, for any two solutions $E_i$, $i=1,2$ of the problems
    \begin{equation*}
        \min\left\lbrace \P(E)+\int_E \dfrac{\sd^\psi_{F_i}}h+g_i :  E \textnormal{ is of finite perimeter}\right\rbrace,
    \end{equation*}
    we have $E_1\subseteq E_2$. If, instead, $F_1\subseteq F_2$, then we have that the minimal (respectively maximal) solution to \eqref{problema discreto} for $i=1$ is contained in the minimal (respectively maximal)  solution to \eqref{problema discreto} for $i=2$.
\end{lemma}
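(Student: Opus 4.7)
My plan follows the classical Almgren--Taylor--Wang min/max-exchange strategy: test problem $1$ with the admissible competitor $E_1\cap E_2$ and problem $2$ with $E_1\cup E_2$, then sum the two minimality inequalities and invoke the submodularity \eqref{submod} of the $\phi$-perimeter. Writing $h_i:=\sd^\psi_{F_i}/h+g_i$, so that the energy of problem $i$ reads $P_\phi(E)+\int_E h_i$, minimality yields
\[ P_\phi(E_1)-P_\phi(E_1\cap E_2)\le -\int_{E_1\setminus E_2}h_1,\qquad P_\phi(E_2)-P_\phi(E_1\cup E_2)\le \int_{E_1\setminus E_2}h_2, \]
and summing with \eqref{submod} produces the sandwich
\[ 0\le P_\phi(E_1)+P_\phi(E_2)-P_\phi(E_1\cap E_2)-P_\phi(E_1\cup E_2)\le \int_{E_1\setminus E_2}(h_2-h_1). \]

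Next I would check the pointwise monotonicity $\sd^\psi_{F_1}\ge \sd^\psi_{F_2}$ under $F_1\subseteq F_2$ directly from the definition (for $x\notin F_2$ the infimum is taken over the smaller set $F_1$; for $x\in F_1$ the negated infimum is taken over the larger complement $F_1^c\supseteq F_2^c$). Combined with $g_1\ge g_2$, this gives $h_1\ge h_2$ everywhere, so the sandwich forces both of its sides to vanish and $h_1=h_2$ a.e.\ on $E_1\setminus E_2$.

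To conclude the first (strict) statement I would promote this to a uniform lower bound. Setting $\delta:=\dist^\psi(\overline{F_1},F_2^c)>0$ (positive by $F_1\subset\joinrel\subset F_2$ and \eqref{inclusione psi palle}), a triangle-inequality argument yields $\sd^\psi_{F_1}(x)-\sd^\psi_{F_2}(x)\ge \delta$ uniformly on $\R^N$, by splitting into three regions: on $F_1$, any admissible path realizing $\dist^\psi(x,F_2^c)$ must first exit through $\bd F_1$, contributing $\ge \delta$ to the remaining length; on $F_2^c$, any path realizing $\dist^\psi(F_1,x)$ must first traverse $\bd F_2$, again adding $\ge \delta$; on the strip $F_2\setminus F_1$ the identity $\sd^\psi_{F_1}(x)-\sd^\psi_{F_2}(x)=\dist^\psi(F_1,x)+\dist^\psi(x,F_2^c)$ combined with $\dist^\psi(F_1,x)+\dist^\psi(x,F_2^c)\ge \dist^\psi(F_1,F_2^c)\ge\delta$ closes the case. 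Together with $g_1\ge g_2$ this gives $h_1-h_2\ge \delta/h$ everywhere, which combined with $\int_{E_1\setminus E_2}(h_1-h_2)\le 0$ forces $|E_1\setminus E_2|=0$.

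For the second (weak) case, the uniform gap is unavailable, but the saturation of both sides of the sandwich already implies that $E_1\cap E_2$ is itself a minimizer of problem $1$ and $E_1\cup E_2$ a minimizer of problem $2$. If $E_1$ is the minimal solution of problem $1$, the minimality forces $E_1\subseteq E_1\cap E_2$, so $E_1\subseteq E_2$; symmetrically, if $E_2$ is the maximal solution of problem $2$, $E_1\cup E_2\subseteq E_2$ gives again $E_1\subseteq E_2$. The main technical point is the uniform strict estimate in the third paragraph: since $\dist^\psi$ is not symmetric, one must carefully track the orientation of $\dist^\psi(\cdot,\cdot)$ and split admissible paths at the first crossing of the relevant boundary, treating the three regions $F_1$, $F_2^c$ and $F_2\setminus F_1$ separately before recombining them under the single constant $\delta$.
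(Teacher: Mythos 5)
Your proof is correct and is exactly the classical submodularity argument that the paper invokes by reference (citing \cite{ChaMorPon15}): test each problem against the intersection/union, sum, use \eqref{submod}, and then use either a uniform gap $\sd^\psi_{F_1}-\sd^\psi_{F_2}\ge\delta>0$ in the strict case or the saturation of the submodularity inequality together with minimality/maximality in the weak case. The three-region triangle-inequality verification of the uniform gap and the monotonicity $\sd^\psi_{F_1}\ge\sd^\psi_{F_2}$ under $F_1\subseteq F_2$ are both checked correctly, so nothing is missing.
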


We now prove the volume-density estimates for minimizers of problem \eqref{problema discreto}. This result is based on the minimality properties of almost-minimizers for perimeters induced by regular elliptic integrands (see \cite[Remark 1.9]{DePMag1} for further results). These estimates have the disadvantage that the smallness condition on the radius depends on the parameter $h$. Subsequently, we will recall a finer result in the spirit of \cite{LucStu}, where we can drop this dependence by making some restrictions on the balls considered.
\begin{lemma}\label{lemma density estimate}
    Let $g\in L^\infty$ and assume $E$ minimizes the functional 
    \[\mathscr F(F)=P_\phi(F)+\int_F g\]
    among all measurable subsets of $\R^N$. Then the density estimate
    \begin{align}
        \sigma \rho^N &\le |B_\rho (x)\cap E|\le (1-\sigma) \rho^N \nonumber \\
        \sigma \rho^{N-1}&\le P_\phi(E;B_\rho(x))\le (1-\sigma)\rho^{N-1} \label{perimeter density estimates, h dip}
    \end{align}
    holds for all $x\in \bd^* E,$ $0<\rho<( 2\lambda\|g\|_\infty)^{-1}  :=\rho_0,$ for a suitable $\sigma=\sigma(N,c_\psi,\lambda)$.
\end{lemma}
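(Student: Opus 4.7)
The plan is to run the classical density-estimate argument for almost-minimizers of the perimeter, adapted to the anisotropic setting by systematically exploiting the two-sided bound $\lambda^{-1} P \le P_\phi \le \lambda P$ available for $\phi \in \mathscr{E}$. Fix $x \in \bd^* E$ and set $m(\rho) := |B_\rho(x) \cap E|$; since $x$ is a point of density $1/2$, we have $m(\rho) > 0$ for every $\rho > 0$. The first step is to test the minimality of $E$ against the competitor $E \setminus B_\rho(x)$: for a.e. $\rho$ one splits
\[ P_\phi(E \setminus B_\rho(x)) = P_\phi(E; \R^N \setminus \overline{B_\rho(x)}) + \int_{E \cap \bd B_\rho(x)} \phi(y, \nu_{B_\rho(x)}(y)) \, d\mathcal{H}^{N-1}(y), \]
and, combining with the bound $\phi \le \lambda$ and the minimality of $E$, one obtains
\[ P_\phi(E; B_\rho(x)) \le \lambda\, m'(\rho) + \|g\|_\infty m(\rho). \qquad (\star) \]

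Second, I would combine $(\star)$ with the Euclidean isoperimetric inequality $c_N m(\rho)^{(N-1)/N} \le P(E \cap B_\rho(x)) = P(E; B_\rho(x)) + m'(\rho)$ (valid for a.e. $\rho$) and the bound $P \le \lambda P_\phi$ to arrive at
\[ (\lambda^2 + 1)\,m'(\rho) + \lambda\|g\|_\infty m(\rho) \ge c_N m(\rho)^{(N-1)/N}. \]
The crude inequality $m(\rho) \le \omega_N^{1/N}\,\rho\, m(\rho)^{(N-1)/N}$ together with the smallness $\rho < \rho_0 = (2\lambda\|g\|_\infty)^{-1}$ precisely absorbs the forcing term, producing the autonomous differential inequality $m'(\rho) \ge c_1 m(\rho)^{(N-1)/N}$ for some $c_1 = c_1(N,\lambda) > 0$. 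Since $m > 0$ on $(0, \rho_0)$, integrating the equivalent inequality $(m^{1/N})' \ge c_1/N$ and using $m(\rho) \to 0$ as $\rho \to 0^+$ yields the volume lower bound $m(\rho) \ge \sigma \rho^N$. The corresponding upper bound $m(\rho) \le (1-\sigma)\rho^N$ (after normalizing the constant) follows by running the identical argument with the competitor $E \cup B_\rho(x)$ applied to $\rho \mapsto |B_\rho(x) \setminus E|$.

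The perimeter upper bound $P_\phi(E; B_\rho(x)) \le C \rho^{N-1}$ is then immediate from $(\star)$, since $m'(\rho) \le N\omega_N \rho^{N-1}$ for a.e. $\rho$ and $\|g\|_\infty m(\rho) \le \omega_N \rho^{N-1}/(2\lambda)$ for $\rho < \rho_0$; the bound extends to every $\rho$ by right continuity of $\rho \mapsto P_\phi(E; \overline{B_\rho(x)})$. For the perimeter lower bound I would invoke De Giorgi's relative isoperimetric inequality in the ball,
\[ P(E; B_\rho(x)) \ge c_N' \min\bigl(m(\rho),\, \omega_N \rho^N - m(\rho)\bigr)^{(N-1)/N}, \]
which, together with the two-sided volume estimates just established and $P_\phi \ge P/\lambda$, yields $P_\phi(E; B_\rho(x)) \ge \sigma' \rho^{N-1}$. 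The only mildly delicate point in the whole argument is the bookkeeping of constants in the absorption step, which is what fixes the smallness threshold exactly at $\rho_0 = (2\lambda\|g\|_\infty)^{-1}$; every other step is a standard application of the isoperimetric inequality.
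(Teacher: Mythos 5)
Your proof is correct and is a faithful reconstruction of the argument underlying the result the paper actually invokes: the paper's own proof is a single line, observing that minimality gives the $\Lambda$-minimality inequality $P_\phi(E)\le P_\phi(F)+\|g\|_\infty\,|E\triangle F|$ for all $F$ and then citing Lemma~2.8 of De Philippis--Maggi for the density estimates. What you have written --- cut $E\cap B_\rho(x)$ out (and paste $B_\rho(x)$ in), split the anisotropic perimeter across $\bd B_\rho$, pass to the Euclidean perimeter via $\lambda^{-1}P\le P_\phi\le\lambda P$, apply the isoperimetric inequality, absorb the zero-order term with $m(\rho)\le\omega_N^{1/N}\rho\,m(\rho)^{(N-1)/N}$ for $\rho<\rho_0$, integrate the resulting autonomous differential inequality, and finish the perimeter lower bound with the relative isoperimetric inequality --- is precisely the proof behind that cited lemma, so the mathematics is the same even though the paper delegates it to a reference. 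Two cosmetic remarks: on $E\cap\bd B_\rho(x)$ the outward normal of $E\setminus B_\rho(x)$ points \emph{into} the ball, so the integrand in your splitting should read $\phi\bigl(y,-\nu_{B_\rho(x)}(y)\bigr)$; since $\phi$ need not be even in $p$ the sign matters in principle, but you only use $\phi\le\lambda$ so it is harmless. And your perimeter upper bound comes out as $C(N,\lambda)\rho^{N-1}$ rather than literally $(1-\sigma)\rho^{N-1}$; this is a normalization quirk of the paper's statement (those constants cannot be taken at face value), not a gap in your argument.
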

\begin{proof}
    By minimality, 
    \[ P_\phi(E)\le P_\phi(F)+\|g\|_\infty |E\triangle F| \qquad \forall F\subseteq \R^N, \]
    thus \cite[Lemma 2.8]{DePMag1} implies the thesis.	
\end{proof}

\begin{oss}\label{open close min max sols}
    We remark that the previous result allows us to choose the minimal solution to \eqref{problema discreto} to be an open set, and the maximal one to be a closed set. This follows from the fact that the density estimates imply that the boundary of any minimizer has zero measure.
\end{oss}

We now recall \cite[Lemma 3.7]{ChaMorNovPon}, which is an anisotropic version of \cite[Remark 1.4]{LucStu}. It provides volume-density estimates for minimizers of \eqref{problema discreto} starting from $E$, uniform in $\psi$ and $h$, holding in the exterior of $E$. We remark that, even if in the reference the anisotropy $\phi$ considered did not depend on $x$, all the arguments hold with minor modifications also in our case. We recall the proof of this result, as similar techniques will be used later on.
\begin{lemma}\label{lemma 3.7 in ChaMorNovPon}
    Let $E$ be a bounded, closed set, $h>0$ , and $g\in L^\infty(\R^N)$. Let $E'$ be a minimizer of
    \begin{equation*}
        P_\phi(F )+ \int_{F }  \dfrac {\sd^\psi_E}h  + g .
    \end{equation*} 
    Then, there exists $\sigma>0$, depending on $\lambda$, and $r_0\in (0,1)$, depending only on $N,\lambda,G:=\|g\|_{L^\infty(F)}$, with the following property: if $\bar x$ is such that $|E'\cap B_s(\bar x)|>0$ for all $s>0$ and $B_r(\bar x)\cap E=\emptyset$ with $r\le r_0$, then
    \begin{equation}\label{density estimates lemma 3.7 ChaMorNovPon}
        |E'\cap B_r(\bar x)|\ge \sigma r^N.
    \end{equation}
    Analogously, if $\bar x$ is such that $| B_s(\bar x)\setminus E'|>0$ for all $s>0$ and $B_r(\bar x)\subseteq E$ with $r\le r_0$, then
    \begin{equation*}
        | B_r(\bar x)\setminus E'|\ge \sigma r^N.
    \end{equation*}
\end{lemma}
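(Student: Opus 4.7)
The strategy is to compare $E'$ with the competitor $E' \setminus B_r(\bar x)$ for the first statement (and with $E' \cup B_r(\bar x)$ for the second). The crucial observation is that, since $B_r(\bar x) \cap E = \emptyset$, the signed distance $\sd^\psi_E$ is non-negative throughout $B_r(\bar x)$; this turns what would naively be an $O(1/h)$ obstruction (as in the $h$-dependent estimate of Lemma~\ref{lemma density estimate}) into a favourable sign that can be discarded from the minimality inequality, yielding a radius $r_0$ independent of $h$.

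Setting $m(r):=|E'\cap B_r(\bar x)|$, the minimality of $E'$ tested against $E'\setminus B_r(\bar x)$ gives
\[ P_\phi(E') - P_\phi(E'\setminus B_r(\bar x)) + \dfrac{1}{h}\int_{E'\cap B_r(\bar x)} \sd^\psi_E \le \int_{E'\cap B_r(\bar x)} g. \]
Since $\sd^\psi_E\ge 0$ on $B_r(\bar x)$, the distance term is dropped, producing the $h$-free bound
\[ P_\phi(E') - P_\phi(E'\setminus B_r(\bar x)) \le G\, m(r),\qquad G:=\|g\|_\infty. \]

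Next I would split the $\phi$-perimeters inside and outside $B_r(\bar x)$ and use $\lambda^{-1}|\cdot|\le \phi\le \lambda|\cdot|$ to estimate the left hand side from below by $\lambda^{-1}P(E';B_r(\bar x)) - \lambda\, m'(r)$, where $m'(r)=\mathcal{H}^{N-1}(E'\cap \bd B_r(\bar x))$ comes from the coarea formula. Applying the relative isoperimetric inequality to $E'\cap B_r(\bar x)$ (legitimate after reducing, without loss of generality, to the case $m(r)\le |B_r|/2$) yields $P(E';B_r(\bar x))+m'(r)\ge c_N m(r)^{(N-1)/N}$. Combining these ingredients gives, for a.e.\ $r\in(0,r_0]$, a differential inequality of the form
\[ \dfrac{c_N}{\lambda}\, m(r)^{(N-1)/N} \le G\, m(r) + C_\lambda\, m'(r). \]

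The final step is a standard ODE bootstrap. If $m(r)\le (c_N/(2\lambda G))^N$, the linear term $G\,m(r)$ is absorbed on the left, leaving $(m^{1/N})'(r)\ge c>0$; integrating from $0$ (using $m(s)>0$ for every $s>0$, so $m(r)^{1/N}\to 0$ as $r\to 0^+$) produces $m(r)\ge \sigma r^N$. If instead $m$ exceeds the threshold somewhere in $(0,r_0]$, then monotonicity of $m$ combined with choosing $r_0$ sufficiently small (depending only on $N,\lambda,G$) gives directly $m(r)\ge \sigma r^N$. The second density estimate \eqref{density estimates lemma 3.7 ChaMorNovPon} is proved symmetrically, comparing with $E'\cup B_r(\bar x)$ and using $\sd^\psi_E\le 0$ on $B_r(\bar x)\subseteq E$. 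The main obstacle is the first step — exploiting the sign of $\sd^\psi_E$ so that no $1/h$ term survives; once that is done the rest is the classical Luckhaus--Sturzenhecker isoperimetric bootstrap, insensitive to $h$.
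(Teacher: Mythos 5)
Your proposal is correct and follows essentially the same route as the paper: both test minimality of $E'$ against $E'\setminus B_s(\bar x)$, use $\sd^\psi_E\ge 0$ on $B_r(\bar x)\subset E^c$ to discard the $1/h$ term, and then run the classical isoperimetric bootstrap $(m^{1/N})'\ge c>0$. The only cosmetic difference is your final case distinction: the paper avoids it by choosing $r_0$ small enough (depending on $N,\lambda,G$) that $m(s)\le\omega_N r_0^N$ is automatically below the absorption threshold $(C_N/(2\lambda G))^N$ for all $s\in(0,r_0)$, so the differential inequality holds on the whole range and $\sigma$ depends only on $N,\lambda$ as claimed.
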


\begin{proof}
    For all $s\in(0,r)$, set $E'(s):=E'\setminus B_s(\bar x)$. Note that, for a.e. $s$ we have
    \[  P_\phi(E'(s))=P_\phi(E')- P_\phi(E'\cap B_s(\bar x))+ \int_{E'\cap \bd B_s(\bar x)} \left(\phi(x,\nu(x)) + \phi(x,-\nu(x))\right) \udH(x),\]
    where $\nu$ denotes the outer normal vector of the set $E'\cap \bd B_s(\bar x)$. Since  { $E'\cap  B_s(\bar x)\subset E^c$ and $\sd^\psi_E\ge0$ in $E^c$, one has  $\int_{E'\cap B_s(\bar x)}\sd^\psi_E \ge 0,$ and therefore}  the minimality of $E'$ implies
    \[ P_\phi(E'\cap B_s(\bar x)) + \int_{E'\cap B_s(\bar x)} g \le \int_{E'\cap \bd B_s(\bar x)} \left(\phi(x,\nu(x)) + \phi(x,-\nu(x))\right)  \udH(x). \]
    By the bound on the $\phi-$perimeter and using the classical isoperimetric inequality (whose constant is denoted $C_N$) we obtain
    \begin{align*}
        2\lambda \mathcal H^{N-1}(E'\cap \bd B_s(\bar x))&\ge \frac 1\lambda P(E'\cap B_s(\bar x))+ \int_{E'\cap B_s(\bar x)} g\\
        &\ge \frac 1\lambda C_N|E'\cap B_s(\bar x)|^{\frac{N-1}N}-\|g\|_\infty |E'\cap B_s(\bar x)|\ge \dfrac{C_N}{2\lambda}|E'\cap B_s(\bar x)|^{\frac{N-1}N},
    \end{align*}
    provided $|E'\cap B_s(\bar x)|^{1/N}\le C_N/(2\lambda\|g\|_\infty)$, which is true if $r_0$ is small enough. Since the \textit{rhs} is positive for every $s$, we conclude
    \begin{equation}\label{eq 3.19 ChaMorNovPon}
        \dfrac \ud {\ud s} |E'\cap B_s(\bar x)|^{\frac{1}N}\ge \dfrac{C_N}{4\lambda^2 N}\quad \text{for a.e. }s\in(0,r).
    \end{equation}
    The thesis follows by integrating the above differential inequality. The other case is analogous.
\end{proof}

\begin{oss}
    Requiring that the anisotropy $\psi$ is bounded uniformly from above and below ensures that the results of the previous Lemmas \ref{lemma density estimate} and \ref{lemma 3.7 in ChaMorNovPon} can be read in terms of the $\psi-$balls. For example, for any $r\ge 0$ and $x\in\R^N$, equation \eqref{density estimates lemma 3.7 ChaMorNovPon} could be read as $ |E'\cap B^\psi_r(\bar x)|\ge \sigma  c_\psi^{-N} r^N,$    provided $\bar x$ is such that $|E'\cap B^\psi_s(\bar x)|>0$ for all $s>0$ and $B_r^\psi(\bar x)\cap E=\emptyset$, and holds for all $r\le r_0/c_\psi$. Here, $\sigma$ is as in Lemma \ref{lemma 3.7 in ChaMorNovPon} and depends only on $\lambda$. Analogous statements holds for Lemma \ref{lemma a priori estimate}.
\end{oss}

We now provide some estimates on the evolution of balls under the discrete flow. We start by a simple remark concerning the boundedness of the evolving sets.

\begin{oss}\label{evolution bounded sets}
    A simple estimate on the energies implies that the minimizers of \eqref{problema discreto} are bounded whenever $F$ is bounded. Indeed, assume $F\subseteq B_R$   and consider $B_\rho(x)\cap  (E\setminus B_R)\neq \emptyset $: testing the minimality of $E$ against $F$ we easily deduce
    $$ \dfrac R{2h}|B_\rho(x)\cap E|\le \int_{E\cap B_\rho(x)}\frac {\sd^\psi_F}h\le P_\phi(F)  + \|F_h(\cdot,t)\|_\infty |E\triangle F|\le P_\phi(F)  + \|f\|_\infty   (|F|+|E|) . $$
    Employing the density estimates of  {Lemma} \ref{lemma 3.7 in ChaMorNovPon} and sending $R\to\infty$, we get a contradiction, as the isoperimetric inequality implies that $|E|$ is bounded since $\mathscr F_{h,t}^F(F)<\infty.$
\end{oss}                                                                                             
We now want to prove finer estimates on the speed of evolution of balls. These estimates are classically a crucial step in order to prove existence of the flow. In the case under study,   the main difficulties come from the inhomogeneity of the functionals considered, as in the homogeneous case convexity arguments easily yield the boundedness result, for example. We  will use a  \virg{variational} approach in the spirit of \cite{ChaMorPon15} (but see also \cite[Lemma 3.8]{MugSeiSpa} for a different proof relying more on the smoothness of the evolving set).

\begin{lemma}	\label{lemma estimates on balls}
    For every $R_0>0$ there exist $h_0(R_0)>0$ and $C(R_0,\phi,\psi,f)>0$ with the following property: For all $R\ge R_0$, $h\in(0,h_0)$, $t>0$ and $x\in\R^N$ one has
    \begin{equation}\label{evolution law ball}
        T_{h,t}(B_R(x))\supset B_{R-Ch}(x).
    \end{equation}
\end{lemma}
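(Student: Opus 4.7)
I would follow a two-step variational tangent-ball argument in the spirit of \cite{ChaMorPon15}. Work with the maximal closed representative $E := T^+_{h,t}(B_R(x))$ and set $\bar r := \sup\{r \ge 0 \colon B_r(x) \subset E\}$; the goal is then $\bar r \ge R - Ch$ for suitable $C = C(R_0,\phi,\psi,f)$ and $h \le h_0(R_0)$.

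\textbf{Step 1 (bootstrap $\bar r \ge R_0/2$).} I would first show $\bar r \ge R_0/2$ for $h$ small enough. Assume by contradiction $\bar r < R_0/2$, so that some $y\in B_{R_0/2}(x)\setminus E$ exists. Testing the minimality of $E$ against the competitor $E \cup B_\delta(y)$, for small $\delta>0$, and using the submodularity \eqref{submod} yields
\[
P_\phi(E\cap B_\delta(y)) - P_\phi(B_\delta(y)) \le \frac{1}{h}\int_{B_\delta(y)\setminus E} \sd^\psi_{B_R(x)} - \int_{B_\delta(y)\setminus E} F_h.
\]
Since every $z\in B_\delta(y)$ has $\dist(z,\bd B_R(x))\ge R_0/2-\delta$, the bound $\psi^\circ\ge|\cdot|/c_\psi$ forces $\sd^\psi_{B_R(x)}\le-(R_0/2-\delta)/c_\psi$ on $B_\delta(y)$. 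Combining this with the density estimate $|B_\delta(y)\setminus E|\ge \sigma\delta^N$ from Lemma \ref{lemma 3.7 in ChaMorNovPon} (applicable since $B_\delta(y)\subset B_R(x)$), with the trivial bound $P_\phi(E\cap B_\delta(y))\ge 0$, and with $P_\phi(B_\delta(y))\le \lambda N\omega_N\delta^{N-1}$, forces $\delta\le C(R_0,\phi,\psi,f)\,h$. Fixing $\delta = \min\{r_0,R_0/4\}$ and taking $h\le h_0$ small produces the desired contradiction.

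\textbf{Step 2 (tangent-ball estimate).} Assume now $\bar r<R$, else the conclusion is trivial. Because $E$ is closed and $B_{\bar r}(x)\subset E$, a limit argument on radii $r_n\searrow \bar r$ with $B_{r_n}(x)\not\subset E$ produces a contact point $y\in \bd B_{\bar r}(x)\cap\bd E$. The $C^{1,1/2}$-regularity of $\phi$-perimeter minimizers on $\bd^* E$ together with a small rotation of the tangent ball around $x$ (exploiting that the singular set has Hausdorff codimension $\ge 3$) allows one to take $y\in\bd^* E$, and an elliptic bootstrap from the Euler-Lagrange equation upgrades $\bd E$ to $C^2$ near $y$. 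I would then combine three ingredients:
\begin{itemize}
    \item[(i)] the first-order condition \eqref{EL equation}: $H^\phi_E(y) = F_h(y) - \sd^\psi_{B_R(x)}(y)/h$;
    \item[(ii)] the distance bound $-\sd^\psi_{B_R(x)}(y) \ge (R-\bar r)/c_\psi$, from $\psi^\circ\ge |\cdot|/c_\psi$ and $\dist(y,\bd B_R(x))=R-\bar r$;
    \item[(iii)] the tangent-ball monotonicity (the last Lemma of Section 2 applied to $B_{\bar r}(x)\subset E$), giving $H^\phi_E(y)\le H^\phi_{B_{\bar r}(x)}(y)\le N(l + \lambda/\bar r)$, where the last bound comes from formula \eqref{curvature expanded} and Definition \ref{def reg ell integr}.
\end{itemize}
Chaining $(i)$--$(iii)$ yields $(R - \bar r)/(c_\psi h) \le \|f\|_\infty + N(l + \lambda/\bar r)$, and inserting $\bar r \ge R_0/2$ from Step 1 produces $R - \bar r \le Ch$ with $C = c_\psi(\|f\|_\infty + N l + 2N\lambda/R_0)$, as desired.

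\textbf{Main obstacle.} The hardest point is the rigorous justification of the curvature comparison $(iii)$ \emph{pointwise} at the contact point $y$. The Euler-Lagrange identity is a priori only weak and the monotonicity lemma is stated for $C^2$ sets, so one must lift $C^{1,1/2}$-regularity to $C^2$ via an elliptic bootstrap (using boundedness of the RHS $F_h-\sd^\psi_{B_R}/h$) and perturb the tangent ball to land on the regular part $\bd^*E$. Step 1, though elementary, is indispensable: the inequality of Step 2 taken alone only yields the dichotomy $\bar r\in[0,C'h]\cup[R-C'h,R]$, and the degenerate branch $\bar r=O(h)$ would otherwise remain consistent with the assumption $\bar r<R-Ch$.
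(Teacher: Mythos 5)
Your Step~1 is correct and self-contained (testing $E$ against $E\cup B_\delta(y)$ with a frozen $\delta$ and using Lemma~\ref{lemma 3.7 in ChaMorNovPon} is a clean way to get $\bar r\ge R_0/2$); the trivial remark that you could just as well run it with the minimal solution $T^-_{h,t}$ rather than $T^+_{h,t}$ should be made, since the statement is about $T_{h,t}=T^-_{h,t}$, but the argument is insensitive to the choice of minimizer.

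Your Step~2, however, takes a genuinely different route from the paper and contains a real gap. The paper never touches $\partial E$ pointwise: it uses the global calibration vector field $X(z)=\nabla_p\phi(z,z/|z|)$ together with \eqref{P via calibration}, writes the perimeter excess in terms of $\int \nabla_p\phi\cdot D\chi_{W_\varepsilon}$ with $W_\varepsilon=(1+\varepsilon)B_\rho\setminus E$, and concludes by the divergence theorem, submodularity, minimality, and an $\varepsilon\to0$ averaging. This is entirely measure-theoretic and requires no regularity of the minimizer beyond the density estimates; in particular Lemma~\ref{lemma estimates on balls} is proved under \eqref{standing hp} alone, in every dimension, without \eqref{hp dim}. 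Your route instead needs a $C^2$ contact point $y\in\partial B_{\bar r}(x)\cap\partial E$ at which the Euler--Lagrange identity and the pointwise curvature comparison can be applied.

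The gap is in your justification that $y$ may be taken in $\partial^*E$. You argue by a ``small rotation of the tangent ball\dots exploiting that the singular set has Hausdorff codimension $\ge 3$.'' This does not work as stated: the bound $\mathcal H^{N-3}(\Sigma)=0$ alone does not prevent the contact points of \emph{all} nearby inner tangent balls from lying inside $\Sigma$ (nothing forces the map $x'\mapsto$ (contact point of the inscribed ball at $x'$) to sweep out a set of positive $\mathcal H^{N-1}$ measure in $\partial E$). The fact you actually need is a barrier/maximum-principle statement: an inner tangent ball at $y$ forces every blowup of $E$ at $y$ to contain a half-space, a $\phi(x_0,\cdot)$-minimizing cone containing a half-space must be a half-space, and $\varepsilon$-regularity then makes $y\in\partial^*E$, after which your Schauder bootstrap from \eqref{EL equation} gives $C^{2,\alpha}$ regularity. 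This is a nontrivial additional ingredient (the strict maximum principle for minimizing cones of smooth elliptic integrands), and it needs to be invoked explicitly; the ``rotation'' heuristic is not a substitute. The paper's variational calibration precisely sidesteps this: it extracts the same curvature-type bound as your chain $(i)$--$(iii)$, but from a volume average over $W_\varepsilon$ and the divergence theorem rather than a pointwise evaluation, so no regular contact point is ever needed.
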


\begin{proof}
    We divide the proof into three steps. In the following, the constants  $\sigma,r_0$ are those of Lemma~\ref{lemma 3.7 in ChaMorNovPon}. We will assume $x=0$ for simplicity. We fix $R\ge R_0$ and denote $E:=T_{h,t} B_R$.

    \noindent\textbf{Step 1.} We prove that, given $a\in (0,\sigma),\varepsilon\in(0,1)$, we can ensure $|B_{R(1-\varepsilon)}\setminus E|<  a\,R^N (1-\varepsilon)^N$ for $h$ small enough. Indeed, assume by contradiction $|B_{R(1-\varepsilon)}\setminus E| \ge  a\,R^N (1-\varepsilon)^N$. Testing the minimality of $ E$ against $B_R$, we obtain 
    \[   \int_{(B_{R(1-\varepsilon)}\setminus E)\cup (E\setminus B_{R})} \dfrac {|\sd^\psi_{B_R}|}h\le  \dfrac 1h \int_{B_{R}\triangle E} |\sd^\psi_{B_R}| \le P_\phi(B_R) {-\int_{B_R\setminus E}F_h+\int_{E\setminus B_R}  F_h} ,  \]
    and estimating $|\sd^\psi_{ B_R}|\ge  {R}\varepsilon/c_\psi$  on $B_{R(1-\varepsilon)}\setminus E$, we get
    \begin{align*}
        \dfrac{  {R}\varepsilon}{h c_\psi} |B_{R(1-\varepsilon)}\setminus E|\le P_\phi(B_R)+ \|f\|_\infty \left(\omega_N  R^N+ |B_{R(1+\varepsilon)}\setminus B_R|\right)+\int_{E \setminus B_{R(1+\varepsilon)}}\left(F_h-\dfrac{|\sd^\psi_{ B_R}|}h\right) .
    \end{align*}
    Taking $h\le \varepsilon /(c_\psi\|f\|_\infty) $, the last term on the \textit{rhs} is negative, thus 
    \[ \dfrac{  {R}\varepsilon}{h c_\psi}  |B_{R(1-\varepsilon)}\setminus E|\le \P(B_R)+\|f\|_\infty R^N(\omega_N+2^{N+1}\varepsilon). \]
    We employ the  hypothesis  to obtain
    \[ \dfrac a {h c_\psi}  \, \varepsilon (1-\varepsilon)^N R^{N+1}\le c_\psi N\omega_N R^{N-1}+cR^N, \]
    a contradiction for $h\le c a\,\varepsilon \,  (1-\varepsilon)^N \min\{1, {R^2}\},$ where $c$ is a constant depending on $N,\phi,\psi,\|f\|_\infty.$

    \noindent\textbf{Step 2.}   {Using Step 1, we prove that $B_{R/2}\subset E $ for $h$ small. Assume that $R\le r_0$: by following}   
    the second part of the proof of Lemma \ref{lemma 3.7 in ChaMorNovPon}  we obtain equation \eqref{eq 3.19 ChaMorNovPon}, which reads 
    \[\dfrac \ud {\ud s} |B_s\setminus E|^{1/N}\ge \dfrac{C_N}{4\lambda^2 N}=\sigma^{1/N}\quad \text{for a.e }s\in (0,R) .   \]
     {Applying the previous step with $\varepsilon=1/4, a=\sigma/3^N$, it holds $|B_{3R/4}\setminus E|\le \sigma R^N/4^N$ for all $h\le c(N,\phi,\psi,f)R$. Therefore, one}
    deduces the existence of a positive extinction radius 
    \begin{equation}\label{extinction radius}
        R^*= \dfrac{3R}4 - \dfrac{|B_{3R/4}\setminus E|^{1/N}}{\sigma^{1/N}}\ge \frac R2
    \end{equation}
    such that $|B_{R^*}\setminus E|=0$, which proves the claim.
    Clearly, taking $h\le cR_0$ the smallness assumption on $h$ is uniform for $R\ge R_0$.
    
    If $R\ge r_0$ one simply uses a covering argument. For any $x\in B_{R-r_0}$, applying the previous result to the ball $B_{r_0}(x)$ and using the comparison principle of Lemma \ref{comparison principle}, we conclude that $\forall h\le c\, r_0$ it holds
    \[ \bigcup_{x\in B_{R-r_0}} B_{r_0/2}(x) {\subset\joinrel\subset} E. \]
    
    \noindent\textbf{Step 3.} We conclude the proof. By the previous two steps  {and Remark \ref{evolution bounded sets},} taking $h$ small enough, we see that
    \[\rho :=\sup\{  r>0 \ :\ |B_r\setminus E|=0 \} \in  {(R/2,+\infty)}. \]
    We can assume $\rho\le R,$ otherwise the result of the lemma  is trivial. Consider the  vector field $\nabla_p \phi\left(x,\frac x{|x|}\right)\in C^1(\R^N,\R^N)$. Then, recalling \eqref{P via calibration}, we get $ \P( {G})\ge  -\int_{\R^N} D\chi_{G} \cdot \nabla_p\phi(x,x/|x|)$ for all $G$ set of finite perimeter and
    \[ \P((1+\varepsilon)B_\rho)= \int_{\R^N} D\chi_{(1+\varepsilon)B_\rho} \cdot\left(  -\nabla_p\phi\left(x,\frac x{|x|}\right) \right). \]
    Setting $W_\varepsilon=(1+\varepsilon)B_\rho\setminus E$, by submodularity on $(1+\varepsilon)B_\rho,E $ and exploiting the minimality of $E$, we obtain
    \begin{align*}
         \int_{\R^N} \nabla_p\phi\left(x,\dfrac x{|x|}\right) \cdot  D\chi_{W_\varepsilon}&=\int_{\R^N} \nabla_p \phi\left(x,\frac x{|x|}\right) \cdot \left( D\chi_{(1+\varepsilon)B_\rho}- D\chi_{(1+\varepsilon)B_\rho\cap E} \right)\\
        &\le \P((1+\varepsilon)B_\rho\cap E)-\P((1+\varepsilon)B_\rho)\\
        &\le \P(E)-\P((1+\varepsilon)B_\rho \cup E)\\
        &\le \dfrac 1h \int_{W^\varepsilon} \sd^\psi_{B_R} {-}\int_{W_\varepsilon}F_h(x,t)\ud x.
    \end{align*}
    We conclude, using the divergence theorem ,
    \[ \int_{W^\varepsilon} -\div \nabla_p \phi\left(x,\frac x{|x|}\right) \le  \dfrac 1h \int_{W^\varepsilon} \sd^\psi_{B_R}+\|f\|_\infty|W_\varepsilon|.\]
    Dividing by $|W^\varepsilon|$ and sending $\varepsilon\to 0$ we obtain 
    \[ \fint_{\bd B_\rho\cap E}  -\div \nabla_p \phi\left(x,\frac x{|x|}\right) {\udH} \le\frac 1{c_\psi} \dfrac{\rho-R}h+\|f\|_\infty.\]
    Exploiting the regularity assumptions  on $\phi$, we remark that
    $$|\div \nabla_p \phi| = |\text{tr}\left(  \nabla_x\nabla_p \phi + \nabla^2_p\phi \nabla(x/|x|) \right)| \le C\left( 1+\frac 1{|x|}\right).$$ 
    Thus, we obtain
    \[  -C\left( 1+\dfrac 1\rho \right) \le \dfrac{\rho-R}h,\]
    which implies that $\rho\in (0,\rho_1)\cup (\rho_2, R)$ for $\rho_{1,2}=\left( R-Ch\mp\sqrt{(R-Ch)^2-4Ch}\right)/2,$ as long as $h\le R_0^2/(4C)$.  Since the choice $\rho\le \rho_1< R/2$ is not admissible, we conclude the proof by estimating
    \begin{equation*}
        \rho_2=R-Ch + \dfrac{R-Ch}2\left( \sqrt{1-\dfrac{4Ch}{(R-Ch)^2}} -1  \right)	\ge R-Ch-\dfrac{Ch}{R-Ch},
    \end{equation*}
    from which the thesis follows.
\end{proof}

The proof of the previous result can be employed to prove an estimate from above of the evolution speed of the flow, as the following result shows. Since the proof follows the same lines and is easier in this case, we only sketch it.

\begin{lemma}	\label{lemma a priori estimate}
    Fix $T>0$ and $R_0>0$. Then, there exist positive constants $C=C(\phi,\psi, f,R_0)$ and $ h_0=h_0(R_0)$ such that, for every $R\ge R_0$ and $h\le h_0$, if $E_0\subseteq B_{R}$, then $E_t^{(h)}\subseteq B_{R+CT}$ for all $t\in (0,T)$.
\end{lemma}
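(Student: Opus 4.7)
The plan is to establish a one-step upper bound $T_{h,t}(B_R)\subseteq B_{R+Ch}$ for every $R\geq R_0$ and every $h$ sufficiently small (uniformly in $R$), and then to iterate in $k$ via the comparison principle (Lemma~\ref{comparison principle}). The argument mirrors Step~3 of Lemma~\ref{lemma estimates on balls}, with the reference ball contracted rather than dilated; the analogues of Steps~1 and~2 of that proof are not needed here, because the boundedness of $E$ is already provided by Remark~\ref{evolution bounded sets}, and the \virg{radius from above} we introduce is automatically bounded below by $R\geq R_0$ (the case $\rho\leq R$ being trivial).

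For the one-step estimate, set $E:=T_{h,t}(B_R)$, centered at the origin for concreteness, and define $\rho:=\inf\{r>0:E\subseteq B_r\}$. Assuming $\rho>R$ (otherwise $E\subseteq B_R\subseteq B_{R+Ch}$), introduce the calibration $\xi(x):=\nabla_p\phi(x,x/|x|)\in C^1(\R^N\setminus\{0\};\R^N)$, which satisfies $\phi^\circ(x,\xi(x))=1$ so that $P_\phi(F)\geq \int_F\div\xi\ud x$ for every set of finite perimeter $F$ by \eqref{P via calibration}, with equality at $F=(1-\varepsilon)B_\rho$ by Euler's identity. The submodularity \eqref{submod} applied to $(1-\varepsilon)B_\rho$ and $E$, combined with the minimality of $E$ tested against $E\cap(1-\varepsilon)B_\rho$, yields
\[
\int_{W_\varepsilon}\div\xi\ud x+\frac{1}{h}\int_{W_\varepsilon}\sd^\psi_{B_R}\ud x\leq \int_{W_\varepsilon}F_h\ud x \leq \|f\|_\infty |W_\varepsilon|,
\]
where $W_\varepsilon:=E\setminus(1-\varepsilon)B_\rho$ has positive measure by the very definition of $\rho$. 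Dividing by $|W_\varepsilon|$, letting $\varepsilon\to 0^+$, and using $\sd^\psi_{B_R}\geq ((1-\varepsilon)\rho-R)/c_\psi$ on $W_\varepsilon$ together with $|\div\xi|\leq C(1+1/|x|)\leq C(1+1/R_0)$ on $W_\varepsilon$ (valid since $(1-\varepsilon)\rho\geq R_0/2$ for small $\varepsilon$, thanks to the regularity of $\phi$), one arrives at
\[
\frac{\rho-R}{hc_\psi}\leq \|f\|_\infty + C\Bigl(1+\tfrac{1}{R_0}\Bigr),
\]
so that $\rho\leq R+C'h$ for a constant $C'=C'(\phi,\psi,f,R_0)$.

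The iteration is then immediate: by induction on $k\in\N$, if $E_{kh}^{(h)}\subseteq B_{R+kC'h}$ (the base case $k=0$ being the hypothesis), then $R+kC'h\geq R_0$, and applying the one-step estimate to $B_{R+kC'h}$ together with the comparison principle gives
\[
E_{(k+1)h}^{(h)}=T_{h,kh}^-(E_{kh}^{(h)})\subseteq T_{h,kh}^-(B_{R+kC'h})\subseteq B_{R+(k+1)C'h}.
\]
For $t\in[0,T)$ we have $[t/h]h\leq t$, so $E_t^{(h)}\subseteq B_{R+C'T}$, provided $h\leq h_0=h_0(R_0,\phi,\psi,f)$ is small enough for the one-step estimate to apply. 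The only genuine subtlety is the limit $\varepsilon\to 0^+$ in the averaged inequality: one must identify the limits of $\fint_{W_\varepsilon}\div\xi\ud x$ and $\fint_{W_\varepsilon}\sd^\psi_{B_R}\ud x$ with values attained on $\bd B_\rho\cap\overline E$. This is routine given the continuity of both integrands on the shrinking annular shell, and crucially uses that $|W_\varepsilon|>0$ for every $\varepsilon>0$, which follows directly from the definition of $\rho$.
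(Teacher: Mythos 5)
Your proof is correct and follows essentially the same route as the paper's: the one-step inclusion $T_{h,t}(B_R)\subseteq B_{R+C'h}$ is obtained by comparing $E$ with $E\cap(1-\varepsilon)B_\rho$, calibrating with $\nabla_p\phi(x,x/|x|)$, and using submodularity, exactly as the paper does in its Lemma~\ref{lemma a priori estimate} (which in turn mirrors Step~3 of Lemma~\ref{lemma estimates on balls}). You streamline slightly by observing that the lower bound $\rho>R/2$ from Steps~1--2 of Lemma~\ref{lemma estimates on balls} is not needed (since $\rho\ge R\ge R_0$ is automatic once one dismisses the trivial case $\rho\le R$), and by using $\rho\ge R_0$ directly rather than solving the quadratic in $\rho$; you also make the iteration explicit where the paper leaves it implicit.
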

\begin{proof}
    Choose $h$ small as in the previous result and set
    \[\rho =\inf\{  r>0 \ :\ |E\setminus B_r |=0 \}\in  {(R/2,+\infty)}.   \]
    We can assume $\rho\ge R,$ otherwise the result is trivial.  Defining $W_\varepsilon=E\setminus (1-\varepsilon)B_\rho$  and reasoning as before we obtain
    \begin{align*}
         \int_{\R^N} \nabla_p\phi\left(x,\dfrac x{|x|}\right)  \cdot  D\chi_{W^\varepsilon}&=\int_{\R^N} \nabla_p\phi\left(x,\dfrac x{|x|}\right)  \cdot \left(  D\chi_{(1-\varepsilon)B_\rho\cup E} -D\chi_{(1-\varepsilon)B_\rho}\right)\\
        &\ge -\P((1-\varepsilon)B_\rho\cup E)+\P((1-\varepsilon)B_\rho)\\
        &\ge -\P(E)+\P((1-\varepsilon)B_\rho \cap E)\\
        &\ge \dfrac 1h \int_{W^\varepsilon} \sd^\psi_{B_R} {-}\int_{W_\varepsilon}F_h(x,t)\ud x.
    \end{align*}
    As in the previous proof, we arrive at
    \[   \dfrac{\rho-R}h\le C\left( 1+\dfrac 1\rho \right) ,\]
    which implies that $\rho\le \rho_2=\left( R+Ch + \sqrt{(R+Ch)^2+4Ch}\right)/2\le R+Ch,$ up to changing $C$.  
\end{proof}

\subsection{Existence of flat flows}

In the following, we will prove that the discrete flow (defined in \eqref{def discrete flow}) defines a discrete-in-time approximation of a weak solution to the mean curvature flow, which is usually known as a \virg{flat} flow  (because the approximating surfaces $\bd^* E_t\h$ converge in the \virg{flat} distance of Whitney to the limit $\partial^*E_t$, see \cite{AlmTayWan}).

We start by proving uniform bounds on the distance between two consecutive sets of the discrete flow and on the symmetric difference between them. We introduce the time-discrete normal velocity: for all $t\ge 0$ and $x\in\R^N$, we set 
\[ v_h(x,t):=\begin{cases}
    \frac 1h \sd^\psi_{E_{t-h}^{(h)}}(x) \quad&\text{for }t\in [h,+\infty)\\
    0 &\text{for }t\in [0,h).
\end{cases}\]
The following result provides a bound on the $L^\infty-$norm of the discrete velocity. Since the proof is essentially the same of \cite[Lemma 2.1]{LucStu}, we will omit it. The only difference is that we use the upper and lower bounds of \eqref{inclusione psi palle} to work with Euclidean balls.
\begin{lemma}\label{L infty estimate}
    There exists a positive constant $c_\infty$ depending only on $N,\psi$ with the following property. Let $E_0$ be a bounded set of finite perimeter and let $\{ E_t^{(h)} \}_{t\in (0,T)}$ be a discrete flow starting from $E_0$. Then, 
    \begin{equation*}
        \sup_{ E_t^{(h)}\triangle  E_{t-h}^{(h)}} |v_h(\cdot,t)|\le c_\infty h^{-1/2}
    \end{equation*}
     {for all $h$ sufficiently small.}
\end{lemma}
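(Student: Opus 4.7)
The plan is to closely follow the Luckhaus--Sturzenhecker strategy \cite{LucStu}, adapted to the $\psi$-distance via \eqref{inclusione psi palle}. Fix $x$ in $E_t^{(h)} \triangle E_{t-h}^{(h)}$; setting $F := E_{t-h}^{(h)}$, $E := E_t^{(h)} = T_{h,t-h} F$ and $d := |\sd^\psi_F(x)| > 0$, the goal reduces to showing $d \le c_\infty \sqrt{h}$. Suppose first $x \in E \setminus F$, so that $d = \sd^\psi_F(x)$ and by \eqref{inclusione psi palle} $B_{d/c_\psi}(x) \cap F = \emptyset$. The triangle inequality for $\dist^\psi$, combined with the upper bound $\dist^\psi \le c_\psi \dist$, gives the pointwise lower bound $\sd^\psi_F(y) \ge d - c_\psi |y - x|$ on all of $\R^N$.

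For a.e.\ $\rho \in (0, d/(2c_\psi)]$, I would test the minimality of $E$ for $\mathscr F_{h,t-h}^F$ against the competitor $E \setminus B_\rho(x)$. A standard decomposition of the $\phi$-perimeter across $\partial B_\rho(x)$, together with the bound $\phi \le \lambda$, yields
\[
P_\phi(E; B_\rho(x)) + \frac{d}{2h} V(\rho) \le \lambda\, \mathcal{H}^{N-1}(\partial B_\rho(x) \cap E) + \|f\|_\infty V(\rho),
\]
where $V(\rho) := |E \cap B_\rho(x)|$. Assuming, without loss of generality, that $d \ge 4h\|f\|_\infty$ (otherwise the desired conclusion is immediate), dropping $P_\phi \ge 0$ produces the differential inequality $\tfrac{d}{4h} V(\rho) \le \lambda V'(\rho)$.

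The second ingredient is a uniform-in-$h$ density lower bound. Applying Lemma \ref{lemma 3.7 in ChaMorNovPon} to $E$ at $x$ — crucially, the source term in that lemma is $F_h$, whose $L^\infty$-norm is controlled by $\|f\|_\infty$ independently of $h$ — one gets $V(\rho) \ge \sigma \rho^N$ for every $\rho \le \min(d/c_\psi, r_0)$, with $\sigma$ and $r_0$ independent of $h$. Integrating the differential inequality from $0$ to $\rho^* := \min(d/(2c_\psi), r_0)$ and comparing with the trivial bound $V(\rho^*) \le \omega_N (\rho^*)^N$, I arrive at $\rho^* \le C h / d$ for some $C = C(N, \lambda, c_\psi, \sigma)$. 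If $d > 2 c_\psi r_0$ then $\rho^* = r_0$, forcing $d \le Ch/r_0$, which contradicts $d > 2c_\psi r_0$ for $h$ small. Hence $\rho^* = d/(2c_\psi)$, giving $d^2 \le 2 C c_\psi h$, i.e.\ $d \le c_\infty \sqrt{h}$.

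The symmetric case $x \in F \setminus E$ is handled by testing against $E \cup B_\rho(x)$, using the analogous upper bound $\sd^\psi_F(y) \le -d + c_\psi |y-x|$ on $B_{d/c_\psi}(x) \subset F$, and invoking the second part of Lemma \ref{lemma 3.7 in ChaMorNovPon} to get $|B_\rho(x) \setminus E| \ge \sigma \rho^N$. The main (essentially bookkeeping) obstacle is to handle the asymmetry of the $\psi$-distance: one must use both bounds in \eqref{inclusione psi palle} so that every integral over a $\psi$-ball may be reduced to one over a Euclidean ball, and one must verify that the density constants from Lemma \ref{lemma 3.7 in ChaMorNovPon} remain uniform in $h$ — which is precisely why the effective forcing is measured through $\|F_h\|_\infty \le \|f\|_\infty$ rather than through $\|\sd^\psi_F\|_\infty/h$.
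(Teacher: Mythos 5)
Your proof is correct and follows exactly the strategy the paper intends. The paper does not actually spell out a proof; it simply cites \cite[Lemma 2.1]{LucStu} and notes that the only modification is to use \eqref{inclusione psi palle} to transfer between $\psi$-balls and Euclidean balls --- which is precisely what you do, together with the uniform-in-$h$ density estimate of Lemma~\ref{lemma 3.7 in ChaMorNovPon} (instead of the $h$-dependent one of Lemma~\ref{lemma density estimate}) and the differential inequality for $V(\rho)=|E\cap B_\rho(x)|$. The one minor cosmetic point: the constant $c_\infty$ you obtain depends on $N$, $c_\psi$ and also on the ellipticity constant $\lambda$ of $\phi$ (through $\sigma$), which is slightly broader than the paper's stated dependence on ``$N,\psi$'' only; this appears to be an imprecision in the paper's statement rather than an error in your argument.
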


The following result can be found in \cite[Proposition 3.4]{MugSeiSpa} (see also \cite[Lemma 2.2]{FusJulMor}): it provides an estimate on the volume of the symmetric difference of two consecutive sets of the discrete flow. The proof is analogous to the one in the reference.

\begin{lemma}    
     {There  exists a constant $C$  such that    for every $t\ge h$ the discrete flow $E_t\h$ satisfies for all $h$ sufficiently small
    \begin{equation}\label{stima L1}
        |E_{t+h}\h \triangle E_t\h|\le C\left( lP_\phi(E_t\h)+\dfrac 1l \int_{E_t\h \triangle E_{t+h}\h}|\sd^\psi_{E_t\h}| \right) \quad \forall l\le c \sqrt h,
    \end{equation}
    where $c$ is  a positive constant depending on $N,\psi$.}
\end{lemma}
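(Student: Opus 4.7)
The plan is to decompose the symmetric difference via a tubular neighbourhood of $\bd E_t^{(h)}$ of width $l$. Split
\[
E_t^{(h)}\triangle E_{t+h}^{(h)} = A_l\cup B_l,\qquad
B_l:=(E_t^{(h)}\triangle E_{t+h}^{(h)})\cap\bigl\{|\sd^\psi_{E_t^{(h)}}|\ge l\bigr\},
\]
with $A_l$ its complement in $E_t^{(h)}\triangle E_{t+h}^{(h)}$. A direct Chebyshev-type estimate yields
\[
|B_l|\le \frac{1}{l}\int_{E_t^{(h)}\triangle E_{t+h}^{(h)}} |\sd^\psi_{E_t^{(h)}}|,
\]
which accounts for the second term on the right-hand side of \eqref{stima L1}.

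For the near-boundary contribution $A_l$ the target bound is $|A_l|\le C\,l\,P_\phi(E_t^{(h)})$. By \eqref{inclusione psi palle} the set $A_l$ lies in a Euclidean tube of width $c_\psi l$ around $\bd E_t^{(h)}$, and a Vitali-type covering reduces the problem to a uniform upper Minkowski-type bound on $\bd E_t^{(h)}$: I would take a maximal disjoint family of balls $B_l(x_i)$ with $x_i\in \bd^* E_t^{(h)}$; use the lower $\phi$-perimeter density at $x_i$ provided by Lemma~\ref{lemma density estimate}, applied to $E_t^{(h)}$ as a minimizer of the incremental problem with datum $E_{t-h}^{(h)}$, to bound the cardinality by $\#\{i\}\le \sigma^{-1}P_\phi(E_t^{(h)})/l^{N-1}$; and finally cover the tube by the enlarged balls $B_{Cl}(x_i)$, whose total Lebesgue measure is at most $C\,l\,P_\phi(E_t^{(h)})$. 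Summing $A_l$ and $B_l$ yields \eqref{stima L1}.

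The delicate point, and the source of the restriction $l\le c\sqrt h$, is that the density estimates of Lemma~\ref{lemma density estimate} hold only up to the radius $\rho_0=(2\lambda\|g\|_\infty)^{-1}$ with $g=\sd^\psi_{E_{t-h}^{(h)}}/h-F_h$, a quantity that a priori blows up as $h\to 0$. The key observation — the main obstacle of the proof — is that only the values of $g$ near the moving boundary $\bd^* E_t^{(h)}$ are actually relevant for the covering step, and on that region the $L^\infty$-bound on the discrete velocity provided by Lemma~\ref{L infty estimate} gives $|\sd^\psi_{E_{t-h}^{(h)}}|/h\le c_\infty h^{-1/2}$. Substituting this localized bound for $\|g\|_\infty$ in the derivation of the density estimate upgrades the admissible scale to $\rho_0\ge c\sqrt h$, allowing the covering argument to be run for every $l\le c\sqrt h$, as claimed. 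This is precisely the argument carried out in \cite[Proposition 3.4]{MugSeiSpa} and \cite[Lemma 2.2]{FusJulMor}, which transfers to the present anisotropic, inhomogeneous setting with only cosmetic changes thanks to \eqref{inclusione psi palle} and the upper/lower bounds on $\phi$.
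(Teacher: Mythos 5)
Your proof is correct and takes essentially the same route as the argument in \cite[Proposition 3.4]{MugSeiSpa} and \cite[Lemma 2.2]{FusJulMor} to which the paper defers: Chebyshev for the far-field set $B_l$, a Vitali covering by balls $B_l(x_i)$ centered on $\bd^* E_t^{(h)}$ whose cardinality is controlled through the lower $\phi$-perimeter density of the minimizer $E_t^{(h)}$ (Lemma~\ref{lemma density estimate}), and the enlarged balls $B_{Cl}(x_i)$ covering the Euclidean $c_\psi l$-tube containing $A_l$ by \eqref{inclusione psi palle}. You have also correctly identified the one subtle point, namely that the admissible scale $\rho_0$ in Lemma~\ref{lemma density estimate} must be obtained from the \emph{local} $L^\infty$ bound on $g=\sd^\psi_{E_{t-h}^{(h)}}/h-F_h$ near $\bd E_t^{(h)}$, which Lemma~\ref{L infty estimate} upgrades to $O(h^{-1/2})$ and hence $\rho_0\ge c\sqrt h$, giving precisely the constraint $l\le c\sqrt h$.
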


We are now able to prove an uniform bound on  the perimeter of the evolving sets. The proof follows \cite[Proposition 2.3]{FusJulMor}.
\begin{lemma} \label{lemma 1.1 LucStu}
    For any initial bounded set $E_0$ of finite $\phi-$perimeter and $h$ small enough, the discrete flow $\{E_t^{(h)}\}$  satisfies
    \[ P_\phi(E_t^{(h)})\le C_T \quad \forall t\in (0,T),\]
    for a suitable constant $C_T=C_T(T,E_0,f,\phi,\psi)$. 
\end{lemma}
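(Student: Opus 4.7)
The plan is to run a discrete Gronwall argument on $P_\phi(E_t\h)$ by combining the dissipation inequality of Lemma~\ref{existence discrete pb} with the volume bound \eqref{stima L1}, choosing the parameter $l$ in \eqref{stima L1} optimally so that the ``bad'' term generated by the forcing can be absorbed into the dissipation.

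\textbf{Step 1: dissipation inequality with forcing.} Setting $E_k := E_{kh}\h$ for brevity and applying Lemma~\ref{existence discrete pb} with $F=E_k$ and $E=E_{k+1}$, we get
\begin{equation*}
P_\phi(E_{k+1}) + \frac{1}{h}\int_{E_{k+1}\triangle E_k}|\sd^\psi_{E_k}| \;\le\; P_\phi(E_k) + \|f\|_\infty\, |E_{k+1}\triangle E_k|.
\end{equation*}

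\textbf{Step 2: trading volume against perimeter and dissipation.} By \eqref{stima L1}, for every $l\le c\sqrt{h}$ (with $h$ small),
\begin{equation*}
\|f\|_\infty |E_{k+1}\triangle E_k| \;\le\; C\|f\|_\infty\, l\, P_\phi(E_k) + \frac{C\|f\|_\infty}{l}\int_{E_{k+1}\triangle E_k}|\sd^\psi_{E_k}|.
\end{equation*}
The idea is to pick $l$ so that the second term is exactly half of the dissipation on the left-hand side: take $l := 2Ch\|f\|_\infty$, which satisfies the admissibility constraint $l\le c\sqrt{h}$ provided $h\le h_0(\|f\|_\infty)$ is small enough. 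Plugging this choice back and absorbing the dissipation term yields
\begin{equation*}
P_\phi(E_{k+1}) + \frac{1}{2h}\int_{E_{k+1}\triangle E_k}|\sd^\psi_{E_k}| \;\le\; \bigl(1+\tilde Ch\bigr)\,P_\phi(E_k),
\end{equation*}
with $\tilde C = 2C^2\|f\|_\infty^2$ depending only on $\phi,\psi,f$.

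\textbf{Step 3: iteration.} Dropping the nonnegative dissipation term and iterating the recursion $P_\phi(E_{k+1})\le (1+\tilde Ch)P_\phi(E_k)$ from $k=0$ up to $k=[t/h]\le T/h$ gives
\begin{equation*}
P_\phi(E_t\h) \;\le\; (1+\tilde Ch)^{[t/h]}\, P_\phi(E_0) \;\le\; e^{\tilde C T}\, P_\phi(E_0) \;=:\; C_T,
\end{equation*}
which is the desired uniform bound.

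\textbf{Main difficulty.} The only delicate point is the optimization of $l$ in \eqref{stima L1}. The term $\|f\|_\infty|E_{k+1}\triangle E_k|$ is of the \emph{same order} as what one would obtain from a pointwise $L^\infty$ velocity bound (Lemma~\ref{L infty estimate} gives only $|v_h|\lesssim h^{-1/2}$, which alone would not suffice), so one must be careful that the choice $l=2Ch\|f\|_\infty$ is compatible both with the constraint $l\le c\sqrt h$ (forcing a smallness assumption on $h$) and with producing a factor of order $h$ in front of $P_\phi(E_k)$, needed to close the Gronwall loop. Everything else is a direct consequence of Lemmas~\ref{existence discrete pb} and the estimate \eqref{stima L1}.
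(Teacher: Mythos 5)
Your argument follows the paper's proof closely in the central step: you test minimality of $E_{k+1}$ against $E_k$ to obtain the dissipation inequality, then invoke \eqref{stima L1} with the same choice $l=2Ch\|f\|_\infty$ to absorb the forcing term into the dissipation, producing exactly the Gronwall-type recursion $P_\phi(E_{k+1})\le(1+\tilde Ch)P_\phi(E_k)$ with $\tilde C=2C^2\|f\|_\infty^2$. This matches \eqref{eq iterativa} in the paper.

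There is, however, one genuine gap in Step 3. You start the iteration at $k=0$, i.e.\ you apply \eqref{stima L1} to $|E_h\h\triangle E_0|$. But \eqref{stima L1} is only stated (and only proved) for $t\ge h$: it relies on density estimates for $E_t\h$, which hold because $E_t\h$ is a minimizer of the incremental problem, and the initial set $E_0$ is arbitrary and need not satisfy any such estimates. So the bound on $|E_h\h\triangle E_0|$ that you implicitly use at $k=0$ is not available. The paper addresses this exactly: it iterates only down to $P_\phi(E_h\h)$, and then bounds $P_\phi(E_h\h)$ separately by combining the $k=0$ dissipation inequality \eqref{eq 2.4 FJM} with Remark \ref{evolution bounded sets}, which ensures $E_h\h\subseteq B_{2r}$ whenever $E_0\subseteq B_r$, so that $|E_h\h\triangle E_0|$ is controlled by a constant depending on $E_0$. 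Your proof should include this extra step at $t=h$; once it is added, the argument coincides with the paper's.
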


\begin{proof}
    By testing the minimality of  $E^{(h)}_t$  against  $E^{(h)}_{t-h}$  we obtain $\forall t\in [h,T)$
    \begin{equation}\label{eq 2.4 FJM}
        P_\phi( E^{(h)}_t )+  {\frac 1h} \int_{E_t\h\triangle E_{t-h}\h}|\sd^\psi_{E_{t-h}\h}|\le P_\phi(E^{(h)}_{t-h})+\|f\|_{\infty}|E_t\h\triangle E_{t-h}\h|.
    \end{equation}
    Combining this estimate with \eqref{stima L1} for $l=2C h\|f\|_\infty\ll  \sqrt h$, where $C$ is the constant  {appearing in equation \eqref{stima L1}}, we obtain  {for $h$ sufficiently small} 
    \begin{equation}	\label{eq iterativa}
        P_\phi( E^{(h)}_t )+\dfrac1{ {2h}}\int_{E_t\h\triangle E_{t-h}\h}|\sd^\psi_{E_{t-h}\h}|\le \left( 1+  {2C^2} h\|f\|_\infty^2 \right) P_\phi(E^{(h)}_{t-h})
    \end{equation}
    Iterating the previous estimate, we find
    \begin{align*}
        P_\phi( E^{(h)}_t )&\le  (1+ {2C^2\|f\|_{\infty}}h)^{\left[ \frac t h \right]-1} \P(E_h\h).
    \end{align*}
     {In order to estimate $\P(E\h_h)$  we start by observing that  Remark \ref{evolution bounded sets}, for $h=h(E_0)$ small enough, implies $E_h\h\subseteq B_{2r}$, where $E_0\subseteq B_r$. Therefore, by \eqref{eq 2.4 FJM} for $t=h$ we obtain  $\P(E_h\h)\le \P(E_0) + c$ and we conclude $\P(E_t\h)\le C_T(\P(E_0)+1)$.}
\end{proof}

 {We then present a sketch of the proof of the local H\"older continuity in time of the discrete flow, uniformly in $h$, which can be  deduced as in \cite[Proposition 2.3]{FusJulMor}. We highlight the main differences.}
\begin{prop}\label{prop 3.3.1 MugSeiSpa}
    Let $E_0$ be an initial bounded set of finite $\phi-$perimeter and $T>0$. Then, for $h$ small enough, for a discrete flow $\{ E\h_t \}$ starting from $E_0$ it holds
    \[  |E_{t}^{(h)}\triangle E_{s}^{(h)}|\le C_T|t-s|^{1/2}\quad \forall  h\le t\le s < T,  \]
    for a suitable constant  $C_T=C_T(T,E_0,f,\phi,\psi)$.  
\end{prop}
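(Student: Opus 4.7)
The plan is to chain the single-step volume estimate \eqref{stima L1} along the discrete trajectory from $t$ to $s$ and then optimize over the free parameter $l$ to extract the expected $\sqrt{s-t}$ scaling. The three ingredients I will use are the uniform perimeter bound $P_\phi(E_t\h)\le C_T$ from Lemma \ref{lemma 1.1 LucStu}, the jump estimate \eqref{stima L1}, and a telescoping of the iterative inequality \eqref{eq iterativa} that controls the accumulated dissipation.

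First, I will rewrite \eqref{eq iterativa} as
\[P_\phi(E_{kh}\h)-P_\phi(E_{(k-1)h}\h)+\frac{1}{2h}\int_{E_{kh}\h\triangle E_{(k-1)h}\h}|\sd^\psi_{E_{(k-1)h}\h}|\le 2C^2h\|f\|_\infty^2\,P_\phi(E_{(k-1)h}\h),\]
and sum over $k\in\{[t/h]+1,\dots,[s/h]\}$. The perimeter terms telescope to $P_\phi(E_t\h)-P_\phi(E_s\h)\le C_T$, while the right-hand side is controlled by $2C^2\|f\|_\infty^2 C_T(s-t)$ via the perimeter bound. This produces
\[\sum_{k=[t/h]+1}^{[s/h]}\int_{E_{kh}\h\triangle E_{(k-1)h}\h}|\sd^\psi_{E_{(k-1)h}\h}|\le 2h\bigl(C_T+2C^2\|f\|_\infty^2 C_T\,T\bigr)=:C'h.\]

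Next, I will apply \eqref{stima L1} to each consecutive pair with a common free parameter $l$, sum the resulting inequalities over $k$, and use the triangle inequality for symmetric differences. Setting $N:=[s/h]-[t/h]$, this gives
\[|E_s\h\triangle E_t\h|\le\sum_{k=[t/h]+1}^{[s/h]}|E_{kh}\h\triangle E_{(k-1)h}\h|\le C\Bigl(N\,l\,C_T+\tfrac{1}{l}\,C'h\Bigr).\]
The choice $l=\sqrt{h/N}$ balances the two terms and yields $|E_s\h\triangle E_t\h|\le C''\sqrt{Nh}\le C''\sqrt{s-t+h}$, which is the desired parabolic Hölder scaling.

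The main subtlety, and the only real obstacle, is the admissibility constraint $l\le c\sqrt h$ built into \eqref{stima L1}: the optimizing choice $l=\sqrt{h/N}$ satisfies it precisely when $N\ge 1/c^2$, i.e.\ when $s-t$ is at least of order $h$. In the complementary regime either $N=0$, in which case the two sets coincide, or $N=1$, and then a single application of \eqref{stima L1} with $l=c\sqrt h$, combined with the pointwise bound $|\sd^\psi_{E_{t-h}\h}|\le c_\infty\sqrt h$ on $E_t\h\triangle E_{t-h}\h$ from Lemma \ref{L infty estimate}, gives $|E_s\h\triangle E_t\h|\le C\sqrt h$. Absorbing this boundary case into the constant produces the stated estimate $|E_s\h\triangle E_t\h|\le C_T|s-t|^{1/2}$ uniformly in $h$ for $h\le t\le s<T$, which is the analogue in our inhomogeneous setting of \cite[Proposition 2.3]{FusJulMor}.
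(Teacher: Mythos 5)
Your proof follows essentially the same route as the paper's. The paper first establishes the uniform dissipation bound \eqref{eq 3.18 MugSeiSpa} and the single-step estimate \eqref{stima L1} (derived from \eqref{eq iterativa}, exactly as you do), and then delegates the remaining chaining-and-optimization step to \cite[Proposition 2.3]{FusJulMor}; you have reproduced that final step explicitly. Your localized dissipation bound is already implied by the global one \eqref{eq 3.18 MugSeiSpa} since all the summands are nonnegative, so the first ingredient is the paper's verbatim.

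Two small gaps deserve attention. First, the ``complementary regime'' is not exhausted by $N\in\{0,1\}$: the admissibility constraint $l=\sqrt{h/N}\le c\sqrt h$ requires $N\ge c^{-2}$, and since $c$ is an unspecified constant depending on $N$ and $\psi$, you must in general treat all $1\le N< c^{-2}$. Second, the absorption argument you sketch for $N=1$ --- inserting the $L^\infty$ bound $|\sd^\psi_{E_{(k-1)h}\h}|\le c_\infty\sqrt h$ into \eqref{stima L1} at $l=c\sqrt h$ --- produces
$$|E_s\h\triangle E_t\h|\le C\,c\sqrt h\,C_T+\frac{Cc_\infty}{c}\,|E_s\h\triangle E_t\h|,$$
which only closes if $Cc_\infty/c<1$; that is not guaranteed. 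Both issues are repaired simultaneously by using, instead of the $L^\infty$ bound, the per-step dissipation bound you already derived, namely $\int_{E_{kh}\h\triangle E_{(k-1)h}\h}|\sd^\psi_{E_{(k-1)h}\h}|\le C'h$, in \eqref{stima L1} with $l=c\sqrt h$: this gives a single-step bound of order $\sqrt h\le\sqrt{Nh}$ with no self-referential term, and summing the at most $\lceil c^{-2}\rceil$ steps preserves the $\sqrt{Nh}\le\sqrt{|s-t|+h}$ scaling with a constant depending only on $c$. Finally, note that the estimate you obtain is $\sqrt{|s-t|+h}$ rather than $\sqrt{|s-t|}$; this discrepancy is an unavoidable artifact of the piecewise-constant-in-time scheme (a jump of order $\sqrt h$ can occur across a single multiple of $h$), and it disappears in the $h\to 0$ limit used in Theorem \ref{theorem existence flat flow}.
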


\begin{proof}  
Following the previous proof, employing again \eqref{eq iterativa} we find 
\begin{align*}
    \P( {E_{2h}\h})&+ {\frac 12}\int_{E_{ {2h}}\h\triangle E_ {h}\h}|v_h {(\cdot,2h)}|+ {\frac 12}\int_{E_ {h}\h\triangle E_0\h}|v_h {(\cdot,h)}|\\
    & \le (1+ch)\P(E_ {h}\h)+ {\frac 12}\int_{E_ {h}\h\triangle E_0\h}|v_h {(\cdot,h)}|\\
    &\le (1+ch)\left(\P(E_ {h}\h)+\int_{E_ {h}\h\triangle E_0\h}|v_h| {(\cdot,h)}\right)\le (1+ch)^2 \P(E_0).
\end{align*}
Iterating, we conclude as before
\begin{equation}\label{eq 3.18 MugSeiSpa}
    \sum_{k=1}^{[T/h]}\int_{E_{kh}^{(h)}\triangle E_{(k-1)h}^{(h)}} |v_h {(\cdot,kh)}|   \le  C_T(\P(E_0)+1).
\end{equation}
Therefore, combining the previous results and applying \eqref{stima L1} with $l=h\ll \sqrt h$, we obtain 
\begin{equation}\label{eq 3.19 MugSeiSpa}
    \int_h^T |E_{t}^{(h)}\triangle E_{t-h}^{(h)}|\le c\sum_{k=1}^{[T/h]}   \left( h P_\phi(E_{kh}^{(h)}) +\int_{E_{kh}^{(h)}\triangle E_{(k-1)h}^{(h)}} |v_h {(\cdot,kh)}|   \right)  \le C_T\left( P_\phi(E_0)+1\right).
\end{equation}
 {The proof then follows the one of \cite[Proposition 2.3]{FusJulMor}, from equation $(2.5)$ onward.}
\end{proof}

We finally prove the main result of this section, the existence of flat flows.
\begin{proof}[Proof of Theorem \ref{theorem existence flat flow}]  The proof is classical and we only sketch it. By the uniform equicontinuity of the approximating sequence of Proposition \ref{prop 3.3.1 MugSeiSpa} and compactness of sets of finite perimeter (by Lemma \ref{lemma a priori estimate} and  \ref{lemma 1.1 LucStu}) we can use the Ascoli-Arzelà theorem to prove that the sequence  $(E_{t}^{(h_k)})_{k\in\N}$  converges in $L^1$ to sets $E_t$ for all times $t\ge 0$ and that the family $\{E_t \}_{t\ge 0}$ satisfies the $1/2-$H\"older continuity property, locally uniformly in time. The other property is then easily deduced.
\end{proof}

\subsection{Existence of distributional solutions}
From Theorem \ref{theorem existence flat flow} we deduce the existence of a subsequence $(h_k)_{k\ge 0}$ such that
\begin{equation}
    D \chi_{E_{t}^{(h_k)}}  \weakstar D \chi_{E_t}\qquad\forall t\ge0.
    \label{convergenza senso misure}
\end{equation}
We will also assume \eqref{hp per}, remarking that it implies
\begin{equation}\label{convergenza perimeteri}
    \lim_{k\to \infty } \P(E_{t}^{(h_k)})=  P_\phi(E_t)\qquad \text{for a.e. }t\in [0,+\infty).
\end{equation}

Our aim is to derive \eqref{legge curvatura} and \eqref{legge velocita} from the Euler-Lagrange equation \eqref{EL equation} and passing to the limit $h\to 0$. To achieve  {this},  we will prove that the discrete velocity is a good approximation (up to multiplicative factors) of the discrete evolution speed of the sets.  Notice that \eqref{legge curvatura} is a weak formulation of \eqref{smooth law}, while \eqref{legge velocita} establishes the link between $v$ and the velocity of the boundaries of $E_t$. Indeed, law \eqref{smooth law} can be interpreted as looking for a family $\{E_t\}_{t\ge0} $ of sets, whose normal vector $\nu_{E_t}$ and $\phi-$curvature $H^\phi_{E_t}$ are well-defined objects and a function $v\ :\ [0,\infty)\times \R^N\to \R$ such that  {for every $t\in [0,+\infty)$ and $x\in\bd E_t$}
\begin{equation}
    \begin{cases}
        v(x,t)&=-H^\phi_{E_t}(x)+f(x,t)\\
        V(x,t)&=\psi(x,\nu_{E_t}(x))v(x,t),
    \end{cases}
\end{equation}
where $V$ represents the normal velocity of evolution, obtained as  {the} limit as $h\to 0$ (in a suitable sense) of the ratio
\[ \frac {\chi_{E_t}-\chi_{E_{t-h}}}h. \]

In this whole section we will assume that hypothesis \eqref{hp dim} holds. In particular, the sets defining the discrete flow are smooth hypersurfaces in $\R^N$. Moreover, we require hypotheses \eqref{hp lip psi 1} to hold.

We start by estimating in time the $L^2-$norm of the discrete velocity. 	The proof is the same  {as the one presented in} \cite[Lemma 3.6]{MugSeiSpa}, up to using the density estimates on the $\phi-$perimeter of Lemma \ref{lemma density estimate} and considering the $\psi-$balls instead of the Euclidean one.
\begin{prop}\label{L2 bound velocity}
        Let $\{E_{t}^{(h)}\}_{t\ge 0}$ be a discrete  flow starting from an initial bounded set $E_0$ of finite $\phi-$perimeter. Then, for any $T>0$ and for $h$ small enough, it holds
    \begin{equation*}
        \int_0^T \int_{\bd E_{t}^{(h)}} v_h^2\udH\ud t\le C_T,
    \end{equation*}
    for a suitable constant $C_T=C_T(T,E_0,\phi,\psi,f)$. 
\end{prop}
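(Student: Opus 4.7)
My plan is to derive the bound by combining the dissipation inequality already obtained in the proof of Lemma \ref{lemma 1.1 LucStu} with a covering argument on the moving boundary that uses the density estimates of Lemmas \ref{lemma density estimate} and \ref{lemma 3.7 in ChaMorNovPon}.

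First, I would iterate inequality \eqref{eq iterativa} as in the proof of Proposition \ref{prop 3.3.1 MugSeiSpa}, in the slightly sharper form
\begin{equation*}
\sum_{k=1}^{[T/h]}\frac{1}{h}\int_{E_{kh}\h\triangle E_{(k-1)h}\h}|\sd^\psi_{E_{(k-1)h}\h}|\ud x
=\sum_{k=1}^{[T/h]}\int_{E_{kh}\h\triangle E_{(k-1)h}\h}|v_h(\cdot,kh)|\ud x\le C_T,
\end{equation*}
which is the $L^1$ bulk dissipation estimate. The goal is to upgrade this $L^1$ bulk estimate into an $L^2$ surface estimate on the moving boundary, i.e.\ to show
\begin{equation*}
h\int_{\bd E_{kh}\h}v_h^2\udH\le C\int_{E_{kh}\h\triangle E_{(k-1)h}\h}|v_h|\ud x
\end{equation*}
for every $k$, after which summing over $k$ and dividing by $h$ gives the claim.

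The proof of that pointwise-in-$k$ inequality is the technical heart of the argument and it is where I would reproduce \cite[Lemma 3.6]{MugSeiSpa}, adapting two ingredients. Fix $k$ and set $F=E_{(k-1)h}\h$, $E=E_{kh}\h$. For $x\in\bd E$ with $v_h(x,kh)=V>0$, one has $B^\psi_{hV}(x)\cap F=\emptyset$ by definition of $\sd^\psi_F$, and the $L^\infty$ bound of Lemma \ref{L infty estimate} guarantees $hV\le c_\infty h^{1/2}\le r_0$ for $h$ small. Then the exterior density estimate \eqref{density estimates lemma 3.7 ChaMorNovPon} applied to the minimizer $E$ gives $|E\cap B^\psi_{hV}(x)|\ge \sigma c_\psi^{-N}(hV)^N$, and since this ball is disjoint from $F$, the whole of $E\cap B^\psi_{hV}(x)$ lies inside $E\triangle F$. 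Moreover the Lipschitz bound $|\sd^\psi_F(y)-\sd^\psi_F(x)|\le c_\psi|x-y|$ implies $|v_h|\ge V/2$ on $B^\psi_{hV/(2c_\psi)}(x)$, so
\begin{equation*}
\int_{E\cap B^\psi_{hV/(2c_\psi)}(x)}|v_h|\ud y\ge c\,(hV)^N V=c\,h^N V^{N+1}.
\end{equation*}
The symmetric statement holds on the side where $v_h<0$ using the interior density estimate for $F$ (this is where \eqref{inclusione psi palle} is used to pass between $\psi$-balls and Euclidean balls and to apply Lemma \ref{lemma 3.7 in ChaMorNovPon}). A standard $5r$-type Vitali covering of $\bd E$ by the balls $B^\psi_{hV/(2c_\psi)}(x)$ with $V=|v_h(x)|$, together with the density estimates $\mathcal{H}^{N-1}(\bd E\cap B^\psi_\rho)\le C\rho^{N-1}$, yields the desired inequality.

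The main obstacle I anticipate is the bookkeeping in the covering step: one must choose the covering radii precisely so that the $\mathcal{H}^{N-1}$-mass carried by $\bd E$ inside each selected ball is controlled by the volume contribution of $E\triangle F$ in a slightly enlarged ball, and both sides must be weighted by the right power of $|v_h(x)|$ so that $v_h^2$ (and not a lower power) appears after summation. This is where the argument differs cosmetically from the translationally invariant one: the balls must be $\psi$-balls to use the structure of $\sd^\psi$, and the perimeter density at scale $\rho$ must come from the $\phi$-perimeter density estimates of Lemma \ref{lemma density estimate} rather than from the Euclidean ones. Because $\phi$ and $\psi$ are uniformly comparable to $|\cdot|$ the constants only change by a factor depending on $\lambda$ and $c_\psi$, and the resulting $C_T$ depends on $T,E_0,\phi,\psi,f$ as claimed.
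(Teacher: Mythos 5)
Your proposal is correct and takes essentially the same approach as the paper, whose proof of Proposition~\ref{L2 bound velocity} consists only of a reference to \cite[Lemma 3.6]{MugSeiSpa} together with the remark that the density estimates are to be taken for the $\phi$-perimeter and that Euclidean balls are to be replaced by $\psi$-balls; your sketch is precisely the content of that cited argument, combining the $L^1$ dissipation bound \eqref{eq 3.18 MugSeiSpa} with a covering argument at the scale $h|v_h|$ using Lemma \ref{lemma 3.7 in ChaMorNovPon} and the Lipschitz property of $\sd^\psi$. Two minor points: the summation of the pointwise-in-$k$ inequality $h\int_{\bd E_{kh}\h}v_h^2\le C\int_{E_{kh}\h\triangle E_{(k-1)h}\h}|v_h|$ already reconstructs the time integral $\int_0^T\int_{\bd E_t\h}v_h^2$, so no final division by $h$ is needed; and in the Vitali step the seed radii should be chosen a constant factor smaller than $h|v_h(x)|/(2c_\psi)$ so that the $5r$-enlargements remain disjoint from $E_{(k-1)h}\h$, which is what makes both the lower volume density bound of Lemma \ref{lemma 3.7 in ChaMorNovPon} and an $h$-uniform upper perimeter density bound applicable on the enlarged balls --- but this is exactly the bookkeeping you already flag.
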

Recalling now the Euler-Lagrange equation \eqref{EL equation} and Lemma \ref{lemma 1.1 LucStu} we conclude
\begin{equation}\label{L2 bound curvatures}
    \int_0^T\int_{\bd E_{t}^{(h)}} \left( H^\phi_{E_{t}^{(h)}} \right)^2= \int_0^T\int_{\bd E_{t}^{(h)}} \left( v_h-F_h \right)^2 \le C_T,
\end{equation}

We now prove an estimate on the error between the discrete velocity $\psi(\cdot,\nu_{E_t})v_h(\cdot,t)$ and the discrete time derivative of $\chi_h$. The proof of this result is based on a double blow-up argument, and the smoothness of  sets (locally) minimizing the $\phi-$perimeter is essential.  We will split the proof in various lemmas: the first one concerns the composition of blow-ups,  {and is a well-known result to the experts. We present a simple proof since we could not find a reference}.
\begin{lemma}[Composition of blow-ups] \label{lemma composition of blow-ups}
    Consider $0<\beta<\beta'<1$.	Assume that  {$(A_h)_{h\in [0,1]}$ is a family of measurable sets} such that the following blow-ups converge as $h\to 0$
    \begin{align*}
        \dfrac{ {A_h-x_h}}{h^\beta}\to A_1&\quad \text{in $L^1_{loc}$}\\[1ex]
        h^{-(\beta'-\beta)}A_1\to A_2&\quad \text{in $L^1_{loc}$},
    \end{align*}
    where  {$x_h\in\bd A_h$ for all $h\in[0,1]$}. Then, if the composition of the blow-ups $h^{-\beta'} {(A_h-x_h)}$ converges  in $L^1_{loc}$, the limit coincides with $A_2$.
\end{lemma}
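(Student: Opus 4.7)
\textit{Plan.} Set $r_h := h^{\beta'-\beta}$, so $r_h \to 0^+$, and introduce $B_h := (A_h-x_h)/h^\beta$ and $C_h := (A_h - x_h)/h^{\beta'}$, which satisfy the scaling identity $C_h = r_h^{-1} B_h$. The three hypotheses then read $B_h \to A_1$, $r_h^{-1} A_1 \to A_2$ and $C_h \to C$, all in $L^1_{loc}(\R^N)$, and the claim is $C = A_2$ a.e.

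The natural strategy is to insert the intermediate term $r_h^{-1} A_1$ and apply a triangle inequality in $L^1(B_R)$ for an arbitrary fixed radius $R>0$:
\[
\|\chi_C-\chi_{A_2}\|_{L^1(B_R)} \le \|\chi_C-\chi_{C_h}\|_{L^1(B_R)} + \|\chi_{C_h}-\chi_{r_h^{-1}A_1}\|_{L^1(B_R)} + \|\chi_{r_h^{-1}A_1}-\chi_{A_2}\|_{L^1(B_R)}.
\]
The first and third terms vanish as $h\to 0$ by the hypotheses on $C_h \to C$ and $r_h^{-1}A_1 \to A_2$ respectively. The middle term, after the change of variable $y = r_h x$, is equal to $r_h^{-N}|B_h \triangle A_1 \cap B_{r_h R}|$.

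The crux is therefore to show that this last quantity tends to zero. I would argue via a diagonal extraction. Since $B_h \to A_1$ in $L^1_{loc}$, for any fixed $R_0\gg R$ we have $|B_h \triangle A_1 \cap B_{r_h R}| \le |B_h \triangle A_1 \cap B_{R_0}| \to 0$ as soon as $r_h R \le R_0$ (which holds eventually). Picking a subsequence $h_n\to 0$ along which this decay beats $r_{h_n}^N$, e.g.\ $|B_{h_n}\triangle A_1 \cap B_{R_0}| \le r_{h_n}^{2N}$, the middle term vanishes along $(h_n)$. Hence $\|\chi_{C_{h_n}}-\chi_{A_2}\|_{L^1(B_R)} \to 0$; since $C_h \to C$ in $L^1_{loc}$, uniqueness of the limit gives $C = A_2$ on $B_R$, and letting $R\to\infty$ concludes.

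The main obstacle is the diagonal extraction with the required rate: in full generality the $L^1_{loc}$ convergence $B_h \to A_1$ carries no quantitative information, and the required subsequence need not exist for an arbitrary measurable family $(A_h)$. In the intended application of this lemma this is not an issue, since the sets $A_h$ arise from the incremental problem \eqref{problema discreto} and inherit the uniform density estimates of Lemmas~\ref{lemma density estimate} and~\ref{lemma 3.7 in ChaMorNovPon} as well as the $C^{1,1/2}$ regularity of $\partial A_h$ at $x_h$ coming from Theorem~\ref{teo reg ellipt integr}; this pins down the tangent half-space at $x_h$ and provides exactly the quantitative decay needed to carry out the diagonal extraction.
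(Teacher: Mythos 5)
Your triangle‑inequality decomposition is the same one the paper uses, and your identification of the crux is correct: the middle term $r_h^{-N}\bigl|(B_h \triangle A_1) \cap B_{r_h R}\bigr|$ need not vanish, and $L^1_{loc}$-convergence carries no rate, so the diagonal extraction cannot be guaranteed to produce the subsequence you need. Indeed, the lemma is false in the stated generality. With $\mathbb{H}$ a half‑space through $0$, take $A_h := \mathbb{H} \,\triangle\, B(0, h^{\beta'})$ and $x_h := 0$; then $0\in\partial A_h$ (near the origin $A_h$ is the upper half‑ball), $h^{-\beta} A_h = \mathbb{H}\,\triangle\, B(0, h^{\beta'-\beta}) \to \mathbb{H} = A_1$, $h^{-(\beta'-\beta)}A_1 = \mathbb{H} = A_2$, yet $h^{-\beta'}A_h \equiv \mathbb{H}\,\triangle\, B(0,1) \neq A_2$.

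The paper's proof runs into the same obstruction you found. After choosing $\tilde h < h$ so that $|(\tilde h^{-\beta}A_{\tilde h} \triangle A_1)\cap B_1| \le \varepsilon h^{N(\beta'-\beta)}$ and defining $\bar h$ through $\bar h^{-\beta'} = \tilde h^{-\beta} h^{-(\beta'-\beta)}$, the middle term in its triangle inequality equals $h^{-N(\beta'-\beta)}|(A_1 \triangle \tilde h^{-\beta} A_{\bar h}) \cap B_{Mh^{\beta'-\beta}}|$ — a quantity involving $A_{\bar h}$, not the set $A_{\tilde h}$ that was actually made small (and forcing $\bar h = \tilde h$ would give $\tilde h = h$). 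Your closing remark is the right diagnosis of what is tacitly used in the application: in Lemma~\ref{lemma computations} the rescaled sets $h^{-\beta}(A_h - x_h)$ are $(\Lambda_h,r_h)$-minimizers with $\Lambda_h \to 0$, $r_h \to \infty$, the limit $A_1$ is a smooth $0$-minimizer, and the $\varepsilon$-regularity theorem upgrades $L^1_{loc}$-convergence to $C^{1,1/2}$ convergence of the boundaries as graphs over the tangent plane of $A_1$ at $0$; since $x_h\in\partial A_h$ forces these graphs to vanish at the origin, this gives exactly the quantitative decay your middle term needs. The lemma would be correct, and your proof would close, if a $C^1$-graph convergence (or uniform almost‑minimality) hypothesis were added to the statement.
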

\begin{proof}
    We can assume \textit{wlog} $ {x_h}=0$. Denote with $A_3=L^1_{loc}-\lim_{h\to 0} h^{-\beta'} {A_h}$. We fix a ball $B_M$ and $\varepsilon>0$. There exists $h^*$ such that $\forall h\le h^*$ it holds 
    \[  |((h^{-\beta'} {A_h})\triangle A_3)\cap B_M|\le \varepsilon,\quad |((h^{-\beta'+\beta}A_1)\triangle A_2)\cap B_M| \le \varepsilon.\]
    We fix $h$ and \textit{wlog} assume $Mh^{\beta'-\beta}\le 1$. 	Taking $\tilde h<h$ suitably small (depending on $h,\varepsilon$), we can ensure 
    \[ |((\tilde h^{-\beta} {A_h})\triangle A_1)\cap B_1|\le \varepsilon h^{N(\beta'-\beta)}. \]
    Since $\tilde h^{-\beta} h^{-(\beta'-\beta)}>h^{-\beta'}$, there exists $\bar h< h$ such that $\bar h^{-\beta'}=\tilde h^{-\beta} h^{-(\beta'-\beta)}.$ We can then estimate 
    \begin{align*}
        |(A_3\triangle A_2)\cap B_M|&\le |(A_3\triangle \bar h^{-\beta'}  {A_h})\cap B_M| + |((h^{-\beta'+\beta}) A_1 \triangle (\bar h^{-\beta'}  {A_h}))\cap B_M|  \\
        &\ + |((h^{-\beta'+\beta} A_1)  \triangle A_2)\cap B_M|\\
        &\le 2\varepsilon + h^{-N(\beta'-\beta)}|(A_1 \triangle (\tilde  h^{-\beta}  {A_h}))\cap B_{Mh^{\beta'-\beta}}|\\
        &\le 2\varepsilon + h^{-N(\beta'-\beta)}|(A_1 \triangle (\tilde  h^{-\beta}  {A_h}))\cap B_{1}|\le 3\varepsilon.
    \end{align*}
\end{proof}

We now compute some estimates on the normal vector on the boundary of the evolving sets,  {following the proof of \cite[Lemma 4.2]{MugSeiSpa} (see also \cite[Proposition 2.2]{LucStu})}.  {We fix $c_\infty$ as the constant appearing in Lemma \ref{L infty estimate}.}

 {In the sequel, we will denote by $\omega(h) $ a  modulus of continuity, that is a continuous increasing function $\omega:[0,1]\to \R$ with $\omega(0)=0$, which can possibly change from statement to statement and line to line to absorb constants independent of $h$.}

\begin{lemma}\label{lemma computations}
    Assume \eqref{standing hp} and \eqref{hp lip psi 1}. For given constants $1/2<\beta'<\alpha<1$ and $T>2$, there exists a  {modulus of continuity $\omega$} with the following property. Consider $t\in[2h,T]$ and $ {x_h}\in\bd E_t^{(h)}$ such that 
    \begin{equation}\label{eq 4.6 MugSeiSpa}
        |v_h(t,y)|\le h^{\alpha-1}\quad \forall y\in B_{c_\infty\sqrt h}( {x_h})\cap (E_t^{(h)}\triangle E_{t-h}^{(h)}).
    \end{equation} 
    Then, there exists $\nu\in  {S^{N-1}}$ such that 
    \begin{align}
        |\nu_{E_t^{(h)}}(\cdot)-\nu|&\le \omega(h)\quad \text{in }B_{h^{\beta'}}( {x_h})\cap \bd E_t^{(h)}\nonumber\\
        |\nu_{E_{t-h}^{(h)}}(\cdot)-\nu|&\le \omega(h)\quad \text{in }B_{h^{\beta'}}( {x_h})\cap \bd E_{t-h}^{(h)}\label{eq 4.8 MugSeiSpa}.
    \end{align}
\end{lemma}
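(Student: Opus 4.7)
The plan is a double-blow-up argument by contradiction, combined with an $\varepsilon$-regularity principle. Assume the conclusion fails: then there exist $\delta_0>0$ and sequences $h_k\searrow0$, $t_k\in[2h_k,T]$, $x_k:=x_{h_k}\in\partial E_{t_k}^{(h_k)}$ satisfying \eqref{eq 4.6 MugSeiSpa}, together with points $y_k,z_k\in B_{h_k^{\beta'}}(x_k)\cap\partial E_{t_k}^{(h_k)}$ for which $|\nu_{E_{t_k}^{(h_k)}}(y_k)-\nu_{E_{t_k}^{(h_k)}}(z_k)|\ge\delta_0$. Fix an intermediate exponent $\beta\in(1/2,\beta')$, needed for the first blow-up to yield a genuine perimeter minimizer, and define
\[F_k:=\frac{E_{t_k}^{(h_k)}-x_k}{h_k^\beta},\qquad F_k':=\frac{E_{t_k-h_k}^{(h_k)}-x_k}{h_k^\beta}.\]
The velocity bound \eqref{eq 4.6 MugSeiSpa} together with \eqref{inclusione psi palle} and the uniform perimeter density estimates of Lemma~\ref{lemma 3.7 in ChaMorNovPon} confine $E_{t_k}^{(h_k)}\triangle E_{t_k-h_k}^{(h_k)}$, inside $B_{c_\infty\sqrt{h_k}}(x_k)$, to a strip of width $O(h_k^\alpha)$ around $\partial E_{t_k-h_k}^{(h_k)}$; this gives $|(F_k\triangle F_k')\cap B_R|=O(h_k^{\alpha-\beta})\to0$. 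Rescaling the incremental problem \eqref{problema discreto} by $h_k^\beta$ shows that $F_k$ minimizes the rescaled $\phi$-perimeter functional up to a perturbation of order $O(h_k^{2\beta-1})+O(h_k^\beta)$ (both vanishing as $h_k\to 0$ since $\beta>1/2$), with rescaled anisotropies $\tilde\phi_k(y,p):=\phi(x_k+h_k^\beta y,p)$ converging, up to subsequence (using Lemma~\ref{lemma a priori estimate} to extract $x_k\to x_0$), locally uniformly to $\phi_0(p):=\phi(x_0,p)\in\mathscr E$. Uniform density estimates supply $\mathrm{BV}_{loc}$-compactness, so $F_k,F_k'\to F_\infty$ in $L^1_{loc}$, with $F_\infty$ a global $\phi_0$-perimeter minimizer and $0\in\partial F_\infty$.

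Under \eqref{hp dim}, the regularity theory recalled in Section~2 yields $\partial F_\infty\in C^{1,1/2}$ at $0$, whose tangent cone is a half-space $H=\{y\cdot\nu\le 0\}$ for some $\nu\in S^{N-1}$; in particular, $F_\infty/h_k^{\beta'-\beta}\to H$ in $L^1_{loc}$. Repeating the first-blow-up compactness argument at the finer scale $h_k^{\beta'}$ yields convergence, along a further subsequence, of $(E_{t_k}^{(h_k)}-x_k)/h_k^{\beta'}$, and Lemma~\ref{lemma composition of blow-ups} identifies the limit as $H$; the same conclusion holds for $(E_{t_k-h_k}^{(h_k)}-x_k)/h_k^{\beta'}$.

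The rescalings at scale $h_k^{\beta'}$ are almost-minimizers of perimeters associated to a family of $C^{2,1}$ elliptic integrands (with perturbation still vanishing in $h$), converging in $L^1_{loc}$ to the half-space $H$. By the $\varepsilon$-regularity (improvement-of-flatness) theorem for almost-minimizers of regular elliptic integrands, this convergence upgrades to $C^{1,\gamma}_{loc}$ for some $\gamma\in(0,1/2]$; hence the normals on both $\partial E_{t_k}^{(h_k)}\cap B_{h_k^{\beta'}}(x_k)$ and $\partial E_{t_k-h_k}^{(h_k)}\cap B_{h_k^{\beta'}}(x_k)$, once pulled back to $B_1$, converge uniformly to $\nu$. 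This produces a modulus $\omega(h)\to 0$ and contradicts $|\nu(y_k)-\nu(z_k)|\ge\delta_0$, establishing both inequalities in \eqref{eq 4.8 MugSeiSpa}. \textbf{The main obstacle} is establishing that the perturbations in the rescaled incremental problems vanish in the limit, which crucially uses the scale hierarchy $\alpha>\beta>1/2$ together with \eqref{eq 4.6 MugSeiSpa}, and then transferring the $L^1_{loc}$-proximity to $H$ into uniform normal control via $\varepsilon$-regularity for the non-autonomous family $\tilde\phi_k$.
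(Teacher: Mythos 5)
Your argument follows the same double-blow-up strategy as the paper: rescale at the intermediate scale $h^\beta$ with $\beta\in(1/2,\beta')$ to obtain a $0$-minimizer of the frozen anisotropy $\phi(x_0,\cdot)$, use the regularity theory under \eqref{hp dim} to make it $C^2$, blow up again to a half-space, identify the $h^{\beta'}$-scale limit via the composition lemma, and conclude with $\varepsilon$-regularity for almost-minimizers. The only slip is the claimed perturbation order $O(h_k^{2\beta-1})$, which should be $O(h_k^{\beta-1/2})$ (this is the paper's $\Lambda_h$), but since both exponents are positive for $\beta>1/2$ the vanishing claim — and hence the whole argument — is unaffected.
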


\begin{proof}
    We fix $\frac12<\beta<\beta'<\alpha$ and $0<R<h^{\frac 12-\beta}/c_\psi$. Testing the minimality of $E^{(h)}_s, s=t,t-h$, we find 
    \begin{equation}\label{eq 4.9 MugSeiSpa}
        P_\phi(E_s^{(h)}, B_{Rh^\beta}( {x_h}))\le P_\phi(G, B_{Rh^\beta}( {x_h}))+\dfrac1h\int_{G\triangle E_s^{(h)}}|\sd^\psi_{E_{s-h}^{(h)}}|+\int_{G\triangle E_s\h}|F_h|,
    \end{equation}
    for any set $G$ of finite perimeter such that $G\triangle E_s^{(h)} {\subset\joinrel\subset} B_{Rh^\beta}( {x_h})$. Using Lemma \ref{L infty estimate}, the $1-$Lipschitz regularity of $\sd^\psi$ and \eqref{eq 4.6 MugSeiSpa}, we deduce $|v_h(s,y)|\le c_\psi Rh^{\beta-1}+c_\infty h^{-1/2}\le (1+c_\infty)h^{-1/2}$ for any $y\in B_{Rh^\beta}( {x_h})\cap (E^{(h)}_s\triangle F)$. Plugging this inequality in \eqref{eq 4.9 MugSeiSpa}, we find 
    \begin{equation}\label{eq 4.10 MugSeiSpa}
        P_\phi(E_s^{(h)}, 	B_{Rh^\beta}( {x_h}))\le P_\phi(G, B_{Rh^\beta}( {x_h}))+\dfrac{1+c}{\sqrt h}|F\triangle E_s^{(h)}|+\|f\|_{\infty} |G\triangle E_s^{(h)}|.
    \end{equation}
    We then introduce the blown-up sets for $s=t,t-h$, defined as
    \begin{align*}
        E_s^{(h),\beta}:=h^{-\beta}\left(E\h_s- {x_h}\right).
    \end{align*}
    Rescaling equation \eqref{eq 4.10 MugSeiSpa}, we easily find that $E_s^{(h),\beta}$ is a $(\Lambda_h,r_h)-$minimizer of the $\phi( {x_h}+h^\beta\cdot,\cdot)-$perimeter, with $\Lambda_h=(1+c)h^{\beta-1/2},\  r_h=h^{1/2-\beta}$. Moreover, scaling the density estimates \eqref{perimeter density estimates, h dip} we have a uniform bound on the perimeters of the sets $E_s^{(h),\beta}$ in each ball $B_R$. By compactness, there exist two sets $E_1^\beta, E_2^\beta$ such that 
    \[ E_t^{(h),\beta}\to E_1^\beta,\ E_{t-h}^{(h),\beta}\to E_2^\beta\qquad \text{in }L^1_{loc}.\]
    Then, by scaling and \eqref{eq 4.6 MugSeiSpa} we find 
    \[  |\sd^\psi_{E_{t-h}^{(h),\beta}}(\cdot)|\le  {c_\infty}h^{\alpha-\beta} \qquad \text{on } B_{h^{1/2-\beta}}(0)\cap (E_t^{(h),\beta}\triangle E_{t-h}^{(h),\beta}),	 \]
    thus we easily conclude that $E^\beta:=E_1^\beta= E^\beta_2.$  {By Lemma \ref{lemma a priori estimate} we can assume that $x_h\to x_0$ as $h\to 0$, up to subsequences.} Moreover, by closeness of $\Lambda_h-$minimizers under $L^1_{loc}-$convergence (see e.g. \cite[Theorem 2.9]{DePMag1}), one can see that  $E^\beta$ is a   {0-minimizer for the $\phi(x_0,\cdot)-$perimeter}. Thus, by complete regularity, it is a smooth $C^2$ set. We can then employ the classic  blow-up theorem  to deduce that, for a fixed $\beta'\in (\beta,\alpha)$, the blow-up $h^{-(\beta'-\beta)}E^\beta$ converges to a half-space $\mathbb H=\{ x\cdot \nu\le 0 \}$ as $h\to 0$. Finally, the blow-ups
    $$ E_s^{(h),\beta'}:=\dfrac{E^{(h)}_s- {x_h}}{h^{\beta'}}$$
    admit a converging subsequence by compactness of sets of finite perimeter and by rescaling equation \eqref{eq 4.10 MugSeiSpa}. Thus,  the previous Lemma \ref{lemma composition of blow-ups} implies
    \[ E_s^{(h),\beta'}\to \mathbb H\quad \text{in }L^1_{loc} \]
    as $h\to 0$. To conclude, the $\varepsilon-$regularity Theorem for $\Lambda-$minimizers (see e.g. \cite[Theorem 3.1]{DePMag1}) ensures that $E_s^{(h),\beta'}$ are uniformly $C^{1,\frac 12}$ sets in $B_1(0)$ for $s=t,t-h$ as $h\to 0$.
\end{proof}

We recall here an approximation result proved in \cite{LucStu} (see also \cite{MugSeiSpa} for a more detailed proof). We remark that the proof of this result is purely geometric and does not rely on the variational problem satisfied by the sets $E_t\h,E_{t-h}\h$.
\begin{cor*}[Corollary 4.3 in \cite{MugSeiSpa}]
    Under the hypotheses of Lemma \ref{lemma computations}, fix $0<\beta<\alpha$ and let $\textbf C_{h^{\beta}}$ be the open cylinder defined as
    \[ \textbf{C}_{h^{\beta}}(x_h,\nu):=\left\lbrace x\in\R^N : |(x-x_h)\cdot \nu|<\frac{h^\beta}2,\  \bigg|(x-x_h)-\left((x-x_h)\cdot \nu\right)\nu\bigg|<\frac{h^\beta}2\right\rbrace.\]
    Then, it holds
    \begin{align*}
        \Big|  \int_{\textbf{C}_{h^\beta/2}(x_h,\nu)} (\chi_{E_t^{(h)}}-\chi_{E_{t-h}^{(h)}})&\ud x   -    \int_{\bd E_t^{(h)}\cap \textbf{C}_{h^\beta/2}(x_h,\nu)} \sd_{E^{(h)}_{t-h}}  \udH \Big|\\
        &\le \omega(h)\int_{\textbf{C}_{h^\beta/2}(x_h,\nu)} |\chi_{E_t^{(h)}}-\chi_{E_{t-h}^{(h)}}|.
    \end{align*}
\end{cor*}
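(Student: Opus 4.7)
The plan is to exploit Lemma \ref{lemma computations}, which says that inside the cylinder $\mathbf{C}_{h^\beta}(x_h,\nu)$ both boundaries $\partial E_t^{(h)}$ and $\partial E_{t-h}^{(h)}$ are (after rescaling) uniformly $C^{1,1/2}$ with normals differing from $\nu$ by at most $\omega(h)$. This regularity is exactly the structural input needed to reduce the statement to an elementary graph computation.

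First I would set up coordinates $(y,s) \in \nu^\perp \times \mathbb{R}$ adapted to $\nu$ and centered at $x_h$, and write
\[
E_s^{(h)} \cap \mathbf{C}_{h^\beta/2} = \{(y,\tau) : \tau < g_s(y)\}, \qquad s\in\{t,t-h\},
\]
for $C^{1,1/2}$ profiles $g_t, g_{t-h}$ defined on the $(N-1)$-dimensional disk $D := B^{N-1}_{h^\beta/4}(0)$, with $\|\nabla g_s\|_\infty \le \omega(h)$. (Strictly, because the cylinder's lateral face might not be entirely filled by a graph, one argues that the portion of $\partial E_s^{(h)}$ which is not a graph over $D$ stays near the top/bottom caps; one chooses the cylinder scale so the graph representation applies where the symmetric difference lives, since Lemma \ref{L infty estimate} bounds the displacement by $c_\infty \sqrt h \ll h^\beta$.) By Fubini this yields
\[
\int_{\mathbf{C}_{h^\beta/2}} \bigl(\chi_{E_t^{(h)}} - \chi_{E_{t-h}^{(h)}}\bigr)\,dx = \int_D \bigl(g_t(y)-g_{t-h}(y)\bigr)\,dy.
\]

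Next, parametrizing $\partial E_t^{(h)}\cap \mathbf{C}_{h^\beta/2}$ as the graph of $g_t$ over $D$, the surface element is $\sqrt{1+|\nabla g_t|^2}\,dy = (1+r_1(y))\,dy$ with $|r_1|\le |\nabla g_t|^2 \le \omega(h)$. The Euclidean signed distance at a boundary point $(y,g_t(y))$ satisfies
\[
\operatorname{sd}_{E_{t-h}^{(h)}}(y,g_t(y)) = (g_t(y)-g_{t-h}(y))\bigl(1+r_2(y)\bigr), \qquad |r_2|\le \omega(h),
\]
which is the standard "slab" identity: the nearest point on the graph of $g_{t-h}$ from $(y,g_t(y))$ differs from $(y,g_{t-h}(y))$ by a correction of order $\|\nabla g_{t-h}\|_\infty \cdot |g_t-g_{t-h}| = \omega(h)\,|g_t-g_{t-h}|$, and the distance itself differs from $|g_t-g_{t-h}|$ by the same factor. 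The sign is correctly recovered in both cases $g_t \gtreqless g_{t-h}$ because $\operatorname{sd}_{E_{t-h}^{(h)}}$ is positive outside $E_{t-h}^{(h)}$.

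Combining these two expansions,
\[
\int_{\partial E_t^{(h)}\cap \mathbf{C}_{h^\beta/2}} \operatorname{sd}_{E_{t-h}^{(h)}}\,d\mathcal{H}^{N-1} - \int_D (g_t-g_{t-h})\,dy = \int_D (g_t-g_{t-h})\bigl[(1+r_1)(1+r_2)-1\bigr]\,dy,
\]
and the bracket is bounded by $\omega(h)$. Replacing $\int_D|g_t-g_{t-h}|\,dy$ by $\int_{\mathbf{C}_{h^\beta/2}}|\chi_{E_t^{(h)}}-\chi_{E_{t-h}^{(h)}}|\,dx$ via Fubini yields the claimed inequality. The main technical point, and the only place where real care is needed, is justifying the graph representation up to the boundary of the cylinder: this is where one uses that $|v_h|\le h^{\alpha-1}$ bounds the normal displacement between the two boundaries by $h^\alpha \ll h^\beta$, so that both graphs remain inside $\mathbf{C}_{h^\beta/2}$ and the reduction to the one-dimensional slab identity is valid.
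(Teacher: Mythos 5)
The paper does not include its own proof of this corollary: it cites \cite{LucStu,MugSeiSpa} and remarks that the argument is purely geometric, stating only the pointwise consequence \eqref{computation sd} ("carefully inspecting the proof..."). Your argument is a valid reconstruction of precisely that cited proof and is consistent with \eqref{computation sd}.

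More specifically, your reduction via Lemma \ref{lemma computations} to graphs $g_t,g_{t-h}$ over a disk $D$ with $\|\nabla g_s\|_\infty\le\omega(h)$, the Fubini identity $\int_{\mathbf C_{h^\beta/2}}(\chi_{E_t^{(h)}}-\chi_{E_{t-h}^{(h)}})=\int_D(g_t-g_{t-h})$, and the two multiplicative corrections $\sqrt{1+|\nabla g_t|^2}=1+r_1$ and $\sd_{E_{t-h}^{(h)}}(y,g_t(y))=(g_t-g_{t-h})(1+r_2)$ with $|r_i|\le\omega(h)$ combine exactly into the paper's pointwise inequality \eqref{computation sd}, from which the integral estimate follows. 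Two minor imprecisions, neither affecting correctness: (i) you write $|r_1|\le|\nabla g_t|^2\le\omega(h)$, but the actual bound is $|r_1|\lesssim|\nabla g_t|^2\le\omega(h)^2$, which is stronger; (ii) for the distance correction, the nearest-point lateral shift is indeed $O(\omega(h)\,|g_t-g_{t-h}|)$ as you say, but the resulting \emph{distance} correction is second order, $\frac{d}{\sqrt{1+L^2}}\le\operatorname{dist}\le d$ with $L=\|\nabla g_{t-h}\|_\infty$, so $|r_2|\lesssim\omega(h)^2$ — again better than needed. Your parenthetical caveat about the graph possibly not filling the cylinder is actually unnecessary: the normal estimate \eqref{eq 4.8 MugSeiSpa} from Lemma \ref{lemma computations} holds on the full ball $B_{h^{\beta'}}(x_h)$, which contains $\mathbf C_{h^\beta/2}$ for admissible $\beta$, and since both boundaries pass through $x_h$ with slope $\le\omega(h)$ they stay well inside the vertical extent of the cylinder. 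Finally, note that what makes the full proof (Proposition \ref{estimates on flat sets}) nontrivial in this paper is the $x$-dependent anisotropy $\psi(x,\cdot)$; for the purely Euclidean statement you are proving here, your slab argument is the complete content.
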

Carefully inspecting the proof, one indeed proves that there exists a geometric constant $C$ such that for any $ y\in B^{N-1}_{h^\beta/2}(x_h)$ 
\begin{equation}\label{computation sd}
    |\sd_{E_{t-h}\h}(y,f_t\h(y))\sqrt{1+|\nabla f_t\h(y)|^2}-\left( f_t\h(y)-f_{t-h}\h(y) \right)|\le \omega(h)|f_t\h(y)-f_{t-h}\h(y)|,
\end{equation}
where we set 
\[  \bd E_s\h\cap \textbf{C}=\{ (y,f_s\h(y))\in \R^{N-1}\times\R,\ |y|\le h^\beta/2 \}, \]
for $s=t,t-h$.

We briefly recall some classical results. Consider an anisotropy $\psi$, independent of the position. It is well-known that, for any  {closed } set $G\subseteq \R^N$,   setting $\sd^\psi_G$ as the distance from $G$ induced by $\psi^\circ$, then  the gradient of $\sd^\psi_G$ exists almost everywhere  and satisfies the  eikonal equation  {(for a proof see for instance \cite[Remark 2.2]{ChaNov22})}
\begin{equation}\label{eikonal eq}
    \psi(\nabla \sd^\psi_G)=1
\end{equation}
almost everywhere.
Moreover, in this particular case, in the definition of $\text{dist}^\psi$ we can consider just straight lines as follows from a simple application of Jensen's inequality: for any curve $\gamma$ as in the definition of $\dist^\psi,$ we have
\[ \int_0^1\psi^\circ(\dot \gamma(t))\ud t \ge \psi^\circ\left( \int_0^1\dot\gamma \right)= \psi^\circ(y-x). \]

\begin{prop}[Estimate on almost flat sets]\label{estimates on flat sets}
    Under the hypotheses of Lemma \ref{lemma computations}  {and with the same notation}, fix $\beta\in(0,\alpha)$ and let $\textbf C_{h^{\beta}}$ be the open cylinder defined as
    \[ \textbf{C}_{h^{\beta}}(x_h,\nu):=\left\lbrace x\in\R^N : |(x-x_h)\cdot \nu|<\frac{h^\beta}2,\  \bigg|(x-x_h)-\left((x-x_h)\cdot \nu\right)\nu\bigg|<\frac{h^\beta}2\right\rbrace.\]
    Then, it holds
    \begin{align*}
        \Big|  \int_{\textbf{C}_{h^\beta/2}(x_h,\nu)} (\chi_{E_t^{(h)}}-\chi_{E_{t-h}^{(h)}})&\ud x   -    \int_{\bd E_t^{(h)}\cap \textbf{C}_{h^\beta/2}(x_h,\nu)} \psi(x,\nu_{E_t\h})\,\sd^\psi_{E^{(h)}_{t-h}}  \udH \Big|\nonumber\\
        &\le \omega(h)\int_{\textbf{C}_{h^\beta/2}(x_h,\nu)} |\chi_{E_t^{(h)}}-\chi_{E_{t-h}^{(h)}}|.
    \end{align*}
\end{prop}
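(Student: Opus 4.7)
The plan is to reduce the statement to the Corollary just above (the Euclidean counterpart, taken from \cite{MugSeiSpa}) by establishing the pointwise identity
\[
\psi(x,\nu_{E_t\h}(x))\,\sd^\psi_{E_{t-h}\h}(x)=\sd_{E_{t-h}\h}(x)+\omega(h)\,|\sd_{E_{t-h}\h}(x)|
\]
for every $x\in\bd E_t^{(h)}\cap \textbf{C}_{h^\beta/2}$. Once this is granted, integrating against $\mathcal H^{N-1}$ on $\bd E_t^{(h)}\cap \textbf{C}_{h^\beta/2}$ and subtracting the Euclidean Corollary, the problem reduces to controlling $\int_{\bd E_t\h\cap \textbf C_{h^\beta/2}}|\sd_{E_{t-h}\h}|\udH$ by $\int_{\textbf C_{h^\beta/2}}|\chi_{E_t\h}-\chi_{E_{t-h}\h}|\ud x$ up to a $1+\omega(h)$ factor. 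This last control follows at once from \eqref{computation sd} and the fact that, by Lemma~\ref{lemma computations}, $|\nabla f_t\h|\le\omega(h)$, so $\sqrt{1+|\nabla f_t\h|^2}=1+\omega(h)$, and the area formula turns the surface integral into $\int|f_t\h-f_{t-h}\h|\ud y=\int|\chi_{E_t\h}-\chi_{E_{t-h}\h}|\ud x$.

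The pointwise identity is proved in two steps. Put $\tilde\psi(\cdot):=\psi(x_h,\cdot)$ and apply Lemma~\ref{lemma computations} with some $\beta'\in(1/2,\beta)$, which forces $\textbf{C}_{h^\beta/2}\subset B_{h^{\beta'}}(x_h)$ (this implicitly uses $\beta>1/2$, the range relevant in the sequel). First, hypothesis~\eqref{hp lip psi 1} combined with $|\nu_{E_t\h}(x)-\nu|\le\omega(h)$ (Lemma~\ref{lemma computations}) and the uniform Lipschitz character of $\tilde\psi$ on $S^{N-1}$ (a consequence of convexity, $1$-homogeneity and the two-sided bound in Definition~\ref{def anisotropy}) yields $|\psi(x,\nu_{E_t\h}(x))-\tilde\psi(\nu)|\le\omega(h)$. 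The same Lipschitz bound applied to $\psi^\circ$ gives $|\psi^\circ(y,\xi)-\tilde\psi^\circ(\xi)|\le c h^\beta|\xi|$ on $\textbf{C}_{h^\beta/2}\times\R^N$, so $\psi$-lengths and $\tilde\psi$-lengths of curves contained in the cylinder coincide up to a factor $1+O(h^\beta)$. Since $|\sd_{E_{t-h}\h}(x)|=O(h^\alpha)\ll h^\beta$ by~\eqref{eq 4.6 MugSeiSpa} and the $1$-Lipschitz property of $\sd^\psi$, the two-sided bound of Definition~\ref{def anisotropy} forces the $\psi^\circ$-short competitors realizing $\sd^\psi_{E_{t-h}\h}(x)$ to have Euclidean length $O(h^\alpha)$ and hence to stay in $\textbf{C}_{h^\beta/2}$, giving $\sd^\psi_{E_{t-h}\h}(x)=\sd^{\tilde\psi}_{E_{t-h}\h}(x)(1+\omega(h))$. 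Second, the half-space formula $\tilde\psi(\nu)\,\sd^{\tilde\psi}_H=\sd_H$ for $H=\{y\cdot\nu\le c\}$---a direct consequence of $\tilde\psi(\nu)=\max\{v\cdot\nu:\tilde\psi^\circ(v)\le 1\}$ and positive $1$-homogeneity---extends to the nearly flat set $E_{t-h}\h$ up to an error $\omega(h)|\sd_{E_{t-h}\h}(x)|$, since by Lemma~\ref{lemma computations} the boundary $\bd E_{t-h}\h\cap \textbf C_{h^\beta/2}$ is a $C^{1,1/2}$ graph over $\nu^\perp$ with gradient of size $\omega(h)$. Combining both steps yields the pointwise identity.

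The delicate point, and main obstacle, is the \emph{nonlocal} character of $\sd^\psi$: it is defined as an infimum over curves in all of $\R^N$, so without further work one cannot safely replace $\psi$ by the frozen $\tilde\psi$. What saves the argument is precisely the uniform two-sided bound $\tfrac{1}{c_\psi}|\cdot|\le\psi^\circ\le c_\psi|\cdot|$ of Definition~\ref{def anisotropy}, which converts the a priori bound $|\sd_{E_{t-h}\h}(x)|=O(h^\alpha)$ coming from \eqref{eq 4.6 MugSeiSpa} into a bound on the Euclidean length of every near-optimal competitor, confining it to the cylinder where $\psi\approx\tilde\psi$ and the half-space picture is valid.
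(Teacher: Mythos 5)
Your proposal is correct and follows essentially the same route as the paper's proof: freeze the anisotropy (you at the cylinder center $x_h$, the paper at the foot $x'=(y,f_{t-h}^{(h)}(y))$, both costing $\omega(h)$ since competing geodesics have Euclidean length $O(h^\alpha)$), invoke the half-space identity $\tilde\psi(\nu)\,\sd^{\tilde\psi}_{\mathbb H}=\sd_{\mathbb H}$ (you via the duality formula, the paper via the eikonal equation---equivalent), then transfer to the almost-flat boundary using the $C^{1,1/2}$-graph control of Lemma~\ref{lemma computations}. The only cosmetic difference is that you package the result as a comparison of $\psi\,\sd^\psi$ with the Euclidean $\sd$ and then subtract the MugSeiSpa Corollary together with \eqref{computation sd}, whereas the paper proves the corresponding pointwise estimate \eqref{estimate flatness} directly and integrates; the two decompositions are logically interchangeable.
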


\begin{proof}
     {We recall that the modulus of continuity  $\omega$ may change from line to line to absorb constants independent of $h$.} 
    
    From the previous Lemma \ref{lemma computations} we know that, for $h$ suitably small, both $\bd E_t\h$ and $\bd E_{t-h}\h$ in $\textbf{C}_{h^\beta/2}(x_h,\nu)$ can  be written as graphs of functions of class $C^{1,\frac 12}$. Up to a change of coordinates, we can assume \textit{wlog} that $x_h=0, \nu=e_N$. For simplicity, we set $\textbf{ C}=\textbf{C}_{h^\beta/2}(0,e_N).$  We thus find
    \[  \bd E_s\h\cap \textbf{C}=\{ (y,f_s\h(y))\in \R^{N-1}\times\R,\ |y|\le h^\beta/2 \} \]
    for $s=t,t-h$, where $f_s\h : B_{h^\beta/2}^{N-1}\to \R$ are $C^{1,\frac12}$ functions with 
    \begin{equation*}
        \|\nabla f_s\h\|_{L^\infty(B_{h^\beta/2})}\le \omega(h).
    \end{equation*}
    We want to prove the following slightly stronger pointwise inequality: namely,  that for any point $x=(y,f_t\h(y))\in \bd E_t\h\cap \textbf{C}$, it holds
    \begin{equation}\label{estimate flatness}
        \left\lvert  \sd^\psi_{E_{t-h}^{(h)}}(x)\, \psi(x,\nu_{E_{t}^{(h)}}(x)) \sqrt{1+|\nabla f_t\h(y)|} - \left( f_t\h(y)-f_{t-h}\h(y) \right) \right\rvert \le \omega(h)|f_t\h(y)-f_{t-h}\h(y)|.
    \end{equation}
    Integrating the previous inequality over $\textbf{C}$ yields the thesis. Clearly, it is enough to prove \eqref{estimate flatness} at each point $x$ such that $|\sd^\psi_{E_{t-h}\h}(x)|>0$. We thus fix $x=(y,f_t\h(y))\in \bd E_t\h\cap \textbf{C}$ and denote by $x':=(y,f_{t-h}\h(y))$.  {We remark that these points depend on $h$, but we drop the subscript to ease notation}. It can be assumed without loss of generality that $x\notin E_{t-h}\h$, as the other case is analogous. \\
    \noindent {\textbf{Step 1}  We now prove that, with the notation previously introduced, it holds
    \begin{equation}\label{est dist}
        |\sd'_{E_{t-h}\h}(x)-\sd^\psi_{E_{t-h}\h}(x)|\le \omega(h) |f_t\h(y)-f_{t-h}\h(y)|,
    \end{equation}
    where $\sd'$ denotes the signed distance function induced by the anisotropy $\psi(x',\cdot)$.}  Let $\gamma$ be a smooth curve, with $\gamma(0)=x, \gamma(1)\in\bd E_{t-h}\h $ to be used in the definition of the geodesic distance $\sd^\psi_{E\h_{t-h}}$. Firstly, we remark that one could assume  
    \begin{equation}\label{bound p}
        \gamma([0,1])\subseteq B(x,2 c_\psi^2 |f_t\h(y)-f_{t-h}\h(y)|  )
    \end{equation}
    Indeed, if it were not the case, the lower bounds  contained in \eqref{inclusione psi palle} and \eqref{computation sd} allow us to estimate
    \begin{align} 
        \int_0^1 \psi^\circ(\gamma,\dot \gamma)\ud t\ge\dfrac 1{c_\psi}\int_0^1 |\dot{\gamma}|\ud t\ge 2c_\psi |f_t\h(y)-f_{t-h}\h(y)|
        \ge 2c_\psi\,\sd_{E_{t-h}\h}(x)\ge 2\, \sd^\psi_{E_{t-h}\h}(x), \label{estimate curve}
    \end{align}
    a contradiction for $h$ small. We can reason analogously  for $\sd'_{E\h_{t-h}}$. In particular, we can consider just curves having length $\int_0^1|\dot \gamma|\le c|f_t\h(y)-f_{t-h}\h(y)|$. Therefore, we obtain (by homogeneity)
    \begin{align*}
        \sd^\psi_{E_{t-h}\h}(x)&\le\int_0^1 \psi^\circ(\gamma,\dot \gamma)\ud t\le 	\int_0^1 \psi^\circ(x',\dot \gamma)\ud t  +  \sup_{\nu\in  {S^{N-1}},\,t\in[0,1]} |\psi(\gamma(t),\nu)-\psi(x',\nu)|   \int_0^1|\dot \gamma|\\
        &\le 	\int_0^1 \psi^\circ(x',\dot \gamma)\ud t  + c\,\omega(h) |f_t\h(y)-f_{t-h}\h(h)| ,
    \end{align*}
    and, taking the $\inf_\gamma$, we obtain $\sd^\psi_{E_{t-h}\h}(x)\le \sd'_{E_{t-h}\h}(x)+\omega(h)|f_t\h(y)-f_{t-h}\h(y)|$. The converse inequality can be proved analogously, yielding \eqref{est dist}.

    Therefore, in what follows we will consider always the anisotropy frozen in $x'$, and use $\sd'$ instead of $\sd^\psi$. Finally,  {let $p \in\bd E_{t-h}\h$  a minimizer for the definition of $\sd'_{E\h_{t-h}}(x)$}.  In the following, with $\Pi^v_{\mathbb H} z$,  {$\Pi_{\mathbb H} z$ we denote respectively} the projection on the hyperplane $\mathbb H$ of $z$ along the direction $v$  {and the  orthogonal projection of $z$ on $\mathbb H$}.
    
    \textbf{Step 2.}  {In this step we assume that $E_{t-h}\h\cap \textbf{C} $ coincides with the halfspace $\mathbb H=p+\{ z\cdot \nu\leq 0 \}$ intersected with the same cylinder and prove claim \eqref{estimate flatness}.}\\
 {To this aim, we start noticing that by translation we may assume $p=0$ and that  
    $\sd'_{\mathbb H}(z+\xi)=\sd'_{\mathbb H}(z)$ for all $z\in \R^N$ and for all $\xi$ orthogonal to $\nu$. Hence, in fact, 
    \begin{equation}\label{sd'}
    \sd'_{\mathbb H}(z)=\sd'_{\mathbb H}((z\cdot\nu)\nu)=(z\cdot\nu)\sd'_{\mathbb H}(\nu)\,.    
    \end{equation}
     Therefore, $\sd'_{\mathbb H}$ is differentiable everywhere, with $\nabla \sd'_{\mathbb H}=\sd'_{\mathbb H}(\nu)\nu$. Recalling the eikonal equation \eqref{eikonal eq}, it must hold $\sd'_{\mathbb H}(\nu)=1/\psi(x',\nu)$ and in turn, from \eqref{sd'}, 
    and choosing $z=x$, we have
    \begin{equation}\label{id dis}
        \sd'_{\mathbb H}(x)\psi(x',\nu)=x\cdot \nu=\sd_{\mathbb H }(x).
    \end{equation}}
We remark that $\sd'_{\mathbb H}(x)=\sd'_{E_{t-h}\h}(x)$ by  \eqref{eq 4.6 MugSeiSpa}, thus we conclude \eqref{estimate flatness} by combining \eqref{id dis} with \eqref{computation sd}.  
    
    
    \textbf{Step 3.}  {We now conclude in the general case.}  {With the notation introduced at the end of Step~1, set} $\nu=\nu_{E_{t-h}\h} {(p)}$, and consider the half-space $\H=p+\{ z\cdot \nu\le 0 \}$ and $w:=x'-\Pi_{\H}(x')$  {as depicted in Figure \ref{fig 1}}. We shall prove that $$|w|\le \omega(h)|f_t\h(y)-f_{t-h}\h(y)|.$$ 
    To see this, we start by remarking that \eqref{eq 4.8 MugSeiSpa} implies 
    \[ |e_N-e_N(e_N\cdot \nu_{E_{t-h}\h} )|\le \omega(h)\qquad \text{in }\bd E_{t-h}\h\cap \textbf{C},\]
    implying $e_N\cdot \nu_{E_{t-h}\h} \ge 1-\omega(h)$, and thus, for any versor $v$ tangent to $\bd E_{t-h}\h\cap \textbf{C}$ one has $|v\cdot e_N|\le \omega(h)$. Therefore, we have $(x'-p)\cdot e_N\le \omega(h)|x'-p|$ and also 
    \begin{align*}
        \dfrac{x'-p}{|x'-p|}\cdot \nu &= \dfrac{x'-p}{|x'-p|}\cdot \left( e_N (\nu\cdot e_N ) + \nu-e_N (\nu\cdot e_N) \right)\\
        &\le \omega(h) + | \nu-e_N (\nu\cdot e_N)|=\omega(h)+ \left( 1-| \nu\cdot e_N|^2 \right)^{1/2}\\
        &\le 3\sqrt{\omega(h)},
    \end{align*}
    
    
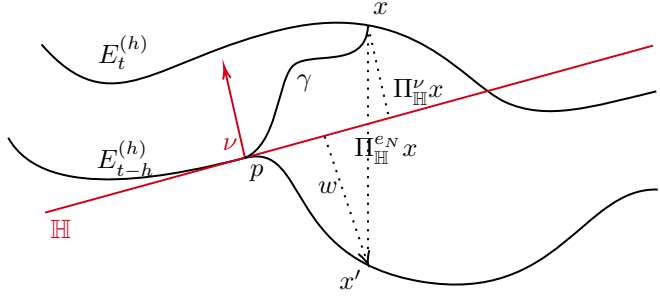
\begin{figure}[h]
\tikzset{every picture/.style={line width=0.75pt}} 
\begin{tikzpicture}[x=0.75pt,y=0.75pt,yscale=-0.6,xscale=0.6]

\draw    (72,50.5) .. controls (132,119.5) and (168.8,62.02) .. (276.8,37.41) .. controls (384.8,12.8) and (426.8,83) .. (461.8,99) .. controls (496.8,115) and (534.8,103) .. (588.8,90) ;
\draw    (44,129) .. controls (76,186) and (189,159.5) .. (243,146) .. controls (297,132.5) and (275,230.6) .. (391,250) .. controls (507,269.4) and (527,170.6) .. (590,172.6) ;
\draw  [dash pattern={on 0.84pt off 2.51pt}]  (346.4,34.71) -- (346.4,235.5) ;
\draw [color={rgb, 255:red, 208; green, 2; blue, 27 }  ,draw opacity=1 ]   (75,192) -- (586,51.5) ;
\draw  [dash pattern={on 0.84pt off 2.51pt}]  (346.4,34.71) -- (364,112) ;
\draw [color={rgb, 255:red, 208; green, 2; blue, 27 }  ,draw opacity=1 ]   (225.84,70.66) -- (243,146) ;
\draw [shift={(225.4,68.71)}, rotate = 77.17] [color={rgb, 255:red, 208; green, 2; blue, 27 }  ,draw opacity=1 ][line width=0.75]    (10.93,-3.29) .. controls (6.95,-1.4) and (3.31,-0.3) .. (0,0) .. controls (3.31,0.3) and (6.95,1.4) .. (10.93,3.29)   ;
\draw  [dash pattern={on 0.84pt off 2.51pt}]  (310,128) -- (345.76,233.61) ;
\draw [shift={(346.4,235.5)}, rotate = 251.29] [color={rgb, 255:red, 0; green, 0; blue, 0 }  ][line width=0.75]    (10.93,-4.9) .. controls (6.95,-2.3) and (3.31,-0.67) .. (0,0) .. controls (3.31,0.67) and (6.95,2.3) .. (10.93,4.9)   ;
\draw    (243,146) .. controls (270,134) and (270,86) .. (282,70) .. controls (294,54) and (345,74) .. (346.4,34.71) ;

\draw (77,195.4) node [anchor=north west][inner sep=0.75pt]  [color={rgb, 255:red, 208; green, 2; blue, 27 }  ,opacity=1 ]  {$\mathbb{H}$};
\draw (348.4,12.45) node [anchor=north west][inner sep=0.75pt]    {$x$};
\draw (320,237.4) node [anchor=north west][inner sep=0.75pt]    {$x'$};
\draw (116,128.4) node [anchor=north west][inner sep=0.75pt]    {$E_{t-h}^{( h)}$};
\draw (117,40.4) node [anchor=north west][inner sep=0.75pt]    {$E_{t}^{( h)}$};
\draw (245,149.4) node [anchor=north west][inner sep=0.75pt]    {$p$};
\draw (365,77.4) node [anchor=north west][inner sep=0.75pt]    {$\Pi _{\mathbb{H}}^{\nu } x$};
\draw (302,163.4) node [anchor=north west][inner sep=0.75pt]    {$w$};
\draw (332.5,125.15) node [anchor=north west][inner sep=0.75pt]    {$\Pi _{\mathbb{H}}^{e_{N}} x$};
\draw (284,73.4) node [anchor=north west][inner sep=0.75pt]    {$\gamma $};
\draw (222,127.4) node [anchor=north west][inner sep=0.75pt]  [color={rgb, 255:red, 208; green, 2; blue, 27 }  ,opacity=1 ]  {$\nu $};

\end{tikzpicture}
\caption{The situation in the proof of the lemma.}
\label{fig 1}
\end{figure}
    by choosing $h$ small. Up to defining $\sqrt{\omega}$ as $\omega$, using the previous estimate and the bounds \eqref{bound p} we  see that 
    \begin{equation}\label{bound planes}
        |w|=|x'-p|  \left(\dfrac{x'-p}{|x'-p|}\cdot \nu\right)\le \omega(h)|x'-p|\le \omega(h)|f_t\h(y)-f_{t-h}\h(y)|.
    \end{equation}
    We now remark that $\sd'_{E_{t-h}\h}(x)= \sd'_{\H}(x)$ (by convexity of the anisotropy $\psi(x',\cdot)$) and so, applying the previous step to $\H$  {and using also \eqref{est dist},} we get
    \begin{equation*}
        \left\lvert  \sd^\psi_{E_{t-h}^{(h)}}(x)\, \psi(x,\nu_{E_{t}^{(h)}}(x)) \sqrt{1+|\nabla f_t\h(y)|} - |x-\Pi^{e_N}_\H x| \right\rvert \le \omega(h)|x-\Pi_\H^{e_N}x|.
    \end{equation*}
    We conclude \eqref{estimate flatness} by estimating 
    \[ \big\lvert |x-\Pi_\H^{e_N}x|-|x-x'|\Big\rvert \le |x'-\Pi_\H^{e_N}x|=|w|/ |\nu\cdot e_N|\le \dfrac{\omega(h)}{1-\omega(h)}|f_t\h(y)-f_{t-h}\h(y)|,\]
    where we used \eqref{bound planes}.  We conclude the proof by a simple change of coordinates and using \eqref{estimate flatness} to find
    \begin{align*}
        \bigg\lvert  &\int_{\bd E_t\h\cap \textbf{C}}  \psi(x,\nu_{E_t\h}(x)) \, \sd^\psi_{E_{t-h}\h}(x)\udH -\int_{B_{h^\beta/2}}f_t\h(y)-f_{t-h}\h(y)\ud y \bigg\rvert\\
        &=\bigg\lvert \int_{B_{ {h^\beta}/2}} \psi((y,f_t\h(y)),\nu_{E_t\h}(y,f_t\h(y))) \, \sd^\psi_{E_{t-h}\h}(y,f_t\h(y))\sqrt{1+|\nabla f_t\h(y)|^2} -(f_t\h(y)-f_{t-h}\h(y))\ud y \bigg\lvert \\
        &\le \omega(h)\int_{B_{h^{\beta}/2}} |f_t\h-f_{t-h}\h|\ud y.
    \end{align*}
\end{proof}

Finally, we are able to prove that the error generated by approximating the discrete velocity with $v_h$ goes to zero as $h\to 0$.  {We follow the lines of \cite[Proposition 2.2]{LucStu}.}
\begin{prop}[Error estimate]\label{proposition 2.2 LucStu}
    Under the hypothesis of Lemma \ref{lemma computations}, the error in the discrete curvature equation vanishes in the limit $h\to 0$, namely
    \begin{equation}\label{error estimate on the velocity}
        \lim_{h\to 0}\,\left\lvert\frac 1h\int_0^T \left( \int_{E_t \h} \eta\ud x-\int_{E_{t-h} \h} \eta\ud x \right)\ud t-\int_0^T\int_{\bd E_{t}^{(h)}}	\psi(x,\nu_{E_{t}^{(h)}})v_h\eta\udH(x)\ud t	 \right\rvert = 0 
    \end{equation}
    for all $\eta\in C^1_c( \R^N\times [0,T))$.
\end{prop}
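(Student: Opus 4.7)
My plan is to adapt the Luckhaus--Sturzenhecker strategy \cite[Proposition~2.2]{LucStu}, with Proposition~\ref{estimates on flat sets} playing the role of the local comparison on flat pieces of $\partial E_t\h$. I would fix $\eta\in C^1_c(\R^N\times[0,T))$ and exponents $\tfrac12<\beta<\beta'<\alpha<\tfrac34$, and for each $t\in[h,T)$ decompose $\partial E_t\h=R_t\sqcup S_t$ with the \emph{singular} set
\[
S_t:=\bigl\{x\in\partial E_t\h:\ \exists\, y\in B_{c_\infty\sqrt h}(x)\cap(E_t\h\triangle E_{t-h}\h),\ |v_h(y,t)|>h^{\alpha-1}\bigr\},
\]
so that the flatness hypothesis~\eqref{eq 4.6 MugSeiSpa} of Lemma~\ref{lemma computations} is satisfied at every point of the \emph{regular} set $R_t$.

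On $R_t$ I would apply Besicovitch's theorem to the family $\{\mathbf C_{h^{\beta'}/2}(x,\nu(x))\}_{x\in R_t}$ provided by Lemma~\ref{lemma computations}, extracting a countable sub-cover $\{\mathbf C_i\}$ of multiplicity bounded by $c(N)$. On each $\mathbf C_i$, Proposition~\ref{estimates on flat sets}, the identity $\sd^\psi_{E_{t-h}\h}=h\,v_h$, and the $C^1$-oscillation $|\eta(\cdot,t)-\eta(x_i,t)|\le \|\nabla\eta\|_\infty h^{\beta'}$ on $\mathbf C_i$ give
\begin{align*}
\biggl|\int_{\mathbf C_i}(\chi_{E_t\h}-\chi_{E_{t-h}\h})\eta\,dx &- h\int_{\partial E_t\h\cap\mathbf C_i}\psi(\cdot,\nu_{E_t\h})v_h\eta\,\udH\biggr|\\
&\le \bigl(\omega(h)\|\eta\|_\infty+C\|\nabla\eta\|_\infty h^{\beta'}\bigr)\int_{\mathbf C_i}|\chi_{E_t\h}-\chi_{E_{t-h}\h}|\,dx\\
&\quad+C\|\nabla\eta\|_\infty h^{\beta'+1}\int_{\partial E_t\h\cap\mathbf C_i}|v_h|\,\udH.
\end{align*}
Summing over $i$ with bounded overlap, dividing by $h$, and integrating in $t$, the regular-part error is majorised by $\bigl(\omega(h)+Ch^{\beta'}\bigr)h^{-1}\int_0^T|E_t\h\triangle E_{t-h}\h|\,dt+Ch^{\beta'}\int_0^T\!\int_{\partial E_t\h}|v_h|\,\udH\,dt$. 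Combining~\eqref{stima L1} with $l=h$, Lemma~\ref{lemma 1.1 LucStu}, and~\eqref{eq 3.18 MugSeiSpa} yields $\int_0^T|E_t\h\triangle E_{t-h}\h|\,dt\le Ch$, so the first addend is $O(\omega(h)+h^{\beta'})\to 0$; Cauchy--Schwarz together with Proposition~\ref{L2 bound velocity} and Lemma~\ref{lemma 1.1 LucStu} controls the second.

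For the singular part I would set $G_t\h:=\{y\in E_t\h\triangle E_{t-h}\h:|v_h|>h^{\alpha-1}\}$. Chebyshev combined with $|v_h|\le c_\infty h^{-1/2}$ (Lemma~\ref{L infty estimate}) and~\eqref{eq 3.18 MugSeiSpa} gives $\int_0^T|G_t\h|\,dt\le Ch^{5/2-2\alpha}$; covering $G_t\h$ by balls of radius $c_\infty\sqrt h$ and using the density estimates~\eqref{density estimates lemma 3.7 ChaMorNovPon} to bound $\mathcal H^{N-1}(\partial E_t\h\cap B_{c_\infty\sqrt h})\le Ch^{(N-1)/2}$ upgrades this to $\int_0^T\mathcal H^{N-1}(S_t)\,dt\le Ch^{2-2\alpha}$. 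The contribution of the $c_\infty\sqrt h$-tube around $S_t$ to the LHS of~\eqref{error estimate on the velocity} is then bounded, via the $L^\infty$-bound on $v_h$, by $Ch^{-1/2}\int_0^T\mathcal H^{N-1}(S_t)\,dt\le Ch^{3/2-2\alpha}\to 0$; Cauchy--Schwarz and Proposition~\ref{L2 bound velocity} control the contribution to the RHS by $Ch^{1-\alpha}\to 0$. The restriction $\alpha<\tfrac34$ comes precisely from the LHS bound.

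The hard part will be this last step: the only energy estimate that survives in the limit (Proposition~\ref{L2 bound velocity}) provides $L^2$-control of $v_h$ along the discrete \emph{boundary} $\partial E_t\h$, whereas the singular set $S_t$ is defined through the \emph{volume} values of $v_h$ inside $E_t\h\triangle E_{t-h}\h$. Bridging these two viewpoints forces the simultaneous use of the $L^\infty$-bound of Lemma~\ref{L infty estimate} and the density estimates~\eqref{density estimates lemma 3.7 ChaMorNovPon}, and the resulting balance of exponents is what dictates the admissible range $1/2<\beta<\beta'<\alpha<3/4$.
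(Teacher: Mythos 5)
Your overall strategy is the same as the paper's: cover $E_t\h\triangle E_{t-h}\h$ by a Besicovitch family split into two classes (flat cylinders where \eqref{eq 4.6 MugSeiSpa} holds, and $c_\infty\sqrt h$-balls elsewhere), apply Proposition~\ref{estimates on flat sets} on the cylinders, and use density estimates together with the $L^\infty$-bound of Lemma~\ref{L infty estimate} and the dissipation bound \eqref{eq 3.18 MugSeiSpa} on the balls. Your handling of the regular part is sound; in fact your boundary error term $h^{\beta'+1}\int_{\partial E_t\h\cap\mathbf C_i}|v_h|\udH$, controlled by Cauchy--Schwarz and Proposition~\ref{L2 bound velocity}, is slightly more explicit than what the paper writes.

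However, there is a genuine quantitative gap in your singular-part estimate, and it changes the admissible range of $\alpha$. You claim the "upgrade'' from $\int_0^T|G_t\h|\,dt$ to $\int_0^T\mathcal H^{N-1}(S_t)\,dt$ costs a factor $h^{-1/2}$, yielding $\alpha<\tfrac34$. That factor is not what the density estimates give. Fix $x\in S_t$ and a witness $y\in B_{c_\infty\sqrt h}(x)$ with $|v_h(y,t)|>h^{\alpha-1}$. The ball $B(y,h^\alpha/(2c_\psi))$ lies entirely on one side of $E_{t-h}\h$, and Lemma~\ref{lemma 3.7 in ChaMorNovPon} gives a \emph{lower} volume bound of order $h^{N\alpha}$ (not $h^{N/2}$) inside $E_t\h\triangle E_{t-h}\h$, while the matching \emph{upper} boundary bound for $B(x,c_\infty\sqrt h)$ is of order $h^{(N-1)/2}$. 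Counting a bounded-overlap subfamily of $\sqrt h$-balls covering $S_t$ therefore gives
\[
\mathcal H^{N-1}(S_t)\lesssim h^{(N-1)/2-N\alpha}\,\bigl|\{y\in E_t\h\triangle E_{t-h}\h : |v_h(y,t)|>c\,h^{\alpha-1}\}\bigr|,
\]
that is, an upgrade factor $h^{(N-1)/2-N\alpha}$, which for $\alpha>\tfrac12$ and $N\ge 2$ is strictly more negative than $h^{-1/2}$. Feeding in the Chebyshev bound $\int_0^T|G_t\h|\,dt\lesssim h^{1-\alpha}\cdot hC_T = Ch^{2-\alpha}$ yields $\int_0^T\mathcal H^{N-1}(S_t)\,dt\lesssim h^{(N+3)/2-(N+1)\alpha}$, and then the singular contribution to \eqref{error estimate on the velocity} (both the tube volume divided by $h$ and the boundary integral against $\|v_h\|_\infty\lesssim h^{-1/2}$) is of order $h^{(N+2)/2-(N+1)\alpha}$. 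This vanishes if and only if $\alpha<\frac{N+2}{2(N+1)}=\frac{N+2}{2N+2}$, exactly the window the paper fixes at the outset. Since $\frac{N+2}{2N+2}<\tfrac34$ for all $N\ge 2$ (e.g.\ $\tfrac23$ for $N=2$, $\tfrac{9}{16}$ for $N=7$), your asserted range $\alpha\in(\tfrac12,\tfrac34)$ contains exponents for which the singular-part estimate does not close. The architecture of your proof is correct, but you need to restrict $\alpha$ to $(\tfrac12,\tfrac{N+2}{2N+2})$ and repair the accounting in the balls-to-boundary step accordingly.
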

\begin{proof}
    We fix $t\in[2h,\infty)$ and $\alpha\in(\frac 12,\frac{N+2}{2N+2})$. For any point $x_h\in\bd E_t\h$ we define the open set $A_{x_h}$ defined as follows:
    \begin{itemize}
        \item[(i)] if \eqref{eq 4.6 MugSeiSpa} holds, we set $A_{x_h}=\textbf{C}_{h^\beta/2}(x_h,\nu)$, with the notations of Corollary \ref{estimates on flat sets};
        \item[(ii)] otherwise we set  $A_{x_h}=B(x_h,c_\infty\sqrt h)$, where $c_\infty$ is the constant of Lemma \ref{L infty estimate}.
    \end{itemize}
    By Lemma \ref{L infty estimate}, the family $\{ A_{x_h} : x_h\in\bd E_t\h \}$ is a covering of $E_t\h\triangle E_{t-h}\h.$ By a simple application of Besicovitch's theorem (see e.g. \cite{Mag-book}), we find a finite collection of points $I\subseteq \bd E_t\h$ such that $\{ A_{x_h} \}_{x_h\in I}$ is a covering of  $E_t\h\triangle E_{t-h}\h$ with the finite intersection property. We proceed to estimate \eqref{error estimate on the velocity} on each $A_{x_h}$ belonging to this family.\\[1ex]
    \textit{Estimate in case (i)} We use Proposition \ref{estimates on flat sets} to deduce
    \begin{align}
        \bigg\lvert &\int_{A_{x_h}} (\chi_{E_t\h}-\chi_{E_{t-h}\h}) \eta\ud x -\int_{\bd E_t\h\cap A_{x_h}}	\psi(x,\nu_{E_{t}^{(h)}}) \sd^\psi_{E_{t-h}\h}\eta\udH \bigg\rvert \nonumber\\
        &\le |\eta(x_h,t)|\bigg\lvert \int_{A_{x_h}} (\chi_{E_t\h}-\chi_{E_{t-h}\h}) -\int_{\bd E_t\h\cap A_{x_h}}	\psi(x,\nu_{E_{t}^{(h)}}) \sd^\psi_{E_{t-h}\h}\udH \bigg\rvert \nonumber\\
        &+\ \bigg\lvert \int_{A_{x_h}} (\chi_{E_t\h}-\chi_{E_{t-h}\h})(\eta-\eta(x_h,t)) -\int_{\bd E_t\h\cap A_{x_h}}	(\eta-\eta(x_h,t)) \, \psi(x,\nu_{E_{t}^{(h)}})\, \sd^\psi_{E_{t-h}\h} \udH \bigg\rvert\nonumber\\
        &\le C(\omega(h)\|\eta\|_\infty+h^\beta \|\nabla \eta\|_\infty) \int_{A_{x_h}}|\chi_{E_t\h}-\chi_{E_{t-h}\h}|\udH+c h^\beta\|\nabla \eta\|_\infty P(E_t\h, A_{x_h}).\label{eq 4.13 MugSeiSpa}
    \end{align}
    \textit{Estimate in case (ii)} By assumption $\exists y\in B_{c_\infty\sqrt h}(x_h)\cap (E_t\h\triangle E_{t-h}\h)$  such that $|v_h(t,y)|>h^{\alpha-1}$. We can assume \textit{wlog} $y\in E_t\h$. We then have $B(y,h^\alpha/(2c_\psi))\subseteq \R^N\setminus E_{t-h}\h$ and $\sd^\psi_{E_{t-h}\h}>h^\alpha/(2c_\psi^2)$ on $B(y,h^\alpha/(2c_\psi))$. Since $h^\alpha<<h^{1/2}$, we can use the density estimates of Lemma \ref{lemma density estimate} to deduce
    \begin{equation*}
        c h^{(N+1)\alpha-1}\le \int_{B(y,h^\alpha/(2c_\psi))\cap (E_t\h\triangle E_{t-h}\h)} |v_h|\ud x.
    \end{equation*}
    Analogously, recalling also Lemma \ref{L infty estimate}, we deduce
    \begin{equation*}
        \int_{B(x_h,c_\infty\sqrt h)\cap\bd E_t\h} |\psi(x,\nu_{E_{t-h}\h})\,\sd^\psi_{E_{t-h}\h} |\udH(x)\le ch^{\frac N2}.
    \end{equation*}
    Combining the two previous equations and $B(y,h^\alpha/(2c_\psi))\subseteq B(y,c\sqrt h)$, we infer
    \begin{align}
        \int_{A_{x_h}}|\chi_{E_t\h}&-\chi_{E_{t-h}\h}|+\int_{A_{x_h}\cap \bd E_t\h} |\psi(x,\nu_{E_{t-h}\h})\,\sd^\psi_{E_{t-h}\h}| \udH \nonumber\\
        & \le  ch^{\frac N2 - (N+1)\alpha+1}\int_{A_{x_h}\cap (E_t\h\triangle E_{t-h}\h)} |\psi(x,\nu_{E_{t-h}\h})v_h|.\label{eq 4.16 MugSeiSpa}
    \end{align}
    Summing over $x_h\in I$ both  \eqref{eq 4.13 MugSeiSpa} and \eqref{eq 4.16 MugSeiSpa}, and using the local finiteness of the covering, we get
    \begin{align*}
        \bigg\lvert &\int (\chi_{E_t\h}-\chi_{E_{t-h}\h}) \eta \ud x -\int_{\bd E_t\h}	\psi(x,\nu_{E_{t}^{(h)}}) \sd^\psi_{E_{t-h}\h}\eta \udH \bigg\rvert \nonumber\\
        &\le \sum_{x_h\in I}\bigg\lvert \int_{A_{x_h}} (\chi_{E_t\h}-\chi_{E_{t-h}\h}) \eta \ud x -\int_{\bd E_t\h\cap A_{x_h}}	\psi(x,\nu_{E_{t}^{(h)}}) \sd^\psi_{E_{t-h}\h}\eta \udH \bigg\rvert \nonumber\\
        &\le c\left(\omega(h)\|\eta \|_\infty+h^\beta\|\nabla \eta \|_\infty + h^{\frac N2-(n+1)\alpha+1}\|\eta 
        \|_\infty    \right) \cdot \nonumber\\
        &\quad \cdot\left( P(E_t\h)+|E_t\h\triangle E_{t-h}\h|+\int_{E_t\h\triangle E_{t-h}\h}|v_h| \right)
    \end{align*}
    where the last constant $c$ depends on $N,\psi$. We then use Lemma \ref{lemma 1.1 LucStu}, \eqref{eq 3.18 MugSeiSpa} and \eqref{eq 3.19 MugSeiSpa} to conclude
    \begin{align*}
        \bigg\lvert \int_{2h}^T & \frac 1h\left( \int_{E_t \h} \eta \ud x-\int_{E_{t-h} \h} \eta \ud x \right) -\int_h^T   \int_{\bd E_t\h}	\psi(x,\nu_{E_{t}^{(h)}}) v_h \eta \udH \bigg\rvert  \\
        & \le c\left(\omega(h)\|\eta \|_\infty+h^\beta\|\nabla \eta \|_\infty + h^{\frac N2-(n+1)\alpha+1}\|\eta 
        \|_\infty    \right),
    \end{align*}
    where $c=c(E_0,f,T,\psi)$ and $T$ is chosen such that spt$\,\eta  {\subset\joinrel\subset} \R^N\times [0,T]$. The conclusion follows using the definition of $\alpha$ and taking the limit $h\to 0$.
    
\end{proof}

The proof of our main theorem of this section is now a consequence of the previous results. In particular, hypothesis \eqref{convergenza senso misure} and \eqref{convergenza perimeteri} imply that the discrete flow converges to the flat flow in the sense of varifolds and this allows to prove \eqref{legge curvatura}, while \eqref{legge velocita} is a consequence of Proposition \ref{proposition 2.2 LucStu}. In order to prove the convergence of the approximations in time of the forcing term, we need to require additionally that \eqref{hp f 1} holds.

\begin{proof}[Proof of Theorem \ref{theorem existence distributional solutions}]
Firstly, combining \cite[Theorem 4.4.2]{Hut} 
with the bounds contained in \eqref{L2 bound curvatures} and in Proposition \ref{L2 bound velocity}, we conclude the existence of functions $v,H^\phi,\tilde f:\R^N\times [0,\infty) \to \R$  satisfying
    \[ \int_0^T\int_{\bd E_t} |v|^2+|H^\phi|^2+|\tilde f|^2\udH\ud t \le C_T\]
and the following properties
\begin{align}
    \lim_k \int_0^T\int_{\bd E_t^{(h_k)}} v_{h_k} \eta \udH\ud t &=\int_0^T\int_{\bd E_t}  \eta v \udH\ud t\nonumber \\
    \lim_k \int_0^T\int_{\bd E_t^{(h_k)}} F_{h_k}(x,t) \eta \udH\ud t &=\int_0^T\int_{\bd E_t}  \eta \tilde f \udH\ud t\nonumber\\
    \lim_k \int_0^T\int_{\bd E_t^{(h_k)}}  H^\phi_{E_t^{(h_k)}}\eta \udH\ud t&=\int_0^T\int_{\bd E_t}   \eta H^\phi \udH\ud t\label{eq 4.4 MugSeiSpa},
\end{align}
for any $ \eta\in C^0_c(\R^N\times[0,T))$. We now employ an approximation procedure to prove that $H^\phi(\cdot,t)$ is the $\phi-$mean curvature of $E_t$ for a.e. $t\in [0,\infty)$, following the lines of \cite{LucStu, MugSeiSpa}.  Fixed $t\in [0,+\infty)$ and $\varepsilon>0$, set $\nu_\varepsilon$ a continuous function such that $\int_{\bd E_t} (\nu_{E_t}-\nu_\varepsilon)^2\udH <\varepsilon.$ Then, by \eqref{convergenza senso misure} one could  prove that  $\lim_{k\to\infty} \int_{\bd E_t^{(h_k)}} (\nu_{E_t^{(h_k)}}-\nu_\varepsilon)^2\udH<\varepsilon.$ Considering test functions in \eqref{eq 4.4 MugSeiSpa} of the form $\eta(x,t)=a(t) g(x)$, one has for a.e. $t\in [0,+\infty)$ 
    \[\lim_k \int_{\bd E_t^{(h_k)}}  H^\phi_{E_t^{(h_k)}}g  \udH=\int_{\bd E_t}   H^\phi g \udH.\]
Thus, for a.e. $t\in [0,+\infty)$  and for any $X\in C^0_c(\R^N; \R^N)$ it holds
    \[\lim_k \int_{\bd E_t^{(h_k)}}  H^\phi_{E_t^{(h_k)}}\nu_{E_t^{(h_k)}}\cdot X \udH=\int_{\bd E_t}   H^\phi\nu_{E_t}\cdot X \udH\]
by approximating the normal vectors of ${E_t^{(h_k)}}$ with $\nu_\varepsilon$. Furthermore, by the convergence \eqref{convergenza senso misure} and the hypothesis \eqref{convergenza perimeteri} we can use the Reshetnyak's continuity theorem (see e.g. \cite[Theorem 2.39]{AmbFusPal}), ensuring
\[ \int_{\bd E_t^{(h_k)} } L(x,\nu_{E_t^{(h_k)}})\udH \to \int_{E_t}  L(x,\nu_{E_t})\udH \]
as $k\to\infty$, for any $L\in C^0_c(\R^N\times\R^N)$. We choose $L(x,\nu)=\div_\phi X$ for some $X\in C^1_c(\R^N;\R^N)$ to obtain
\begin{align*}
    \int_{\bd E_t} \div_\phi X \udH & =\lim_k \int_{\bd E_t^{(h_k)}} \div_\phi X \udH \\
    &=\lim_k \int_{\bd E_t^{(h_k)}}  X\cdot \nu_{E_t^{(h_k)}} H^\phi_{E_t^{(h_k)}}\udH\\
    &=\int_{\bd E_t}  X\cdot \nu_{E_t} H^\phi \udH,
\end{align*}  
which shows that $H^\phi(\cdot,t)$ is the $\phi-$mean curvature of the set $E_t$ for a.e. $t\in[0,+\infty).$ Moreover, we remark that $F_{h_k}(x,t)\to f(x,t)$ for every $(x,t)$, thus for any test function $\eta\in C^0_c(\R^N\times [0,+\infty))$ and $t\in[0,+\infty)$ we have

\[ \begin{split}
    \bigg\lvert \int_{\bd E_t\h} F_{h_k}(x,t) \eta(x,t) \udH_x&-\int_{\bd E}f\eta\udH_x \bigg\rvert \le \left\lvert \int_{\bd E_t\h} F_{h_k} \eta -\int_{\bd E_t} F_{h_k} \eta \right\rvert + \int_{\bd E_t} |F_{h_k}-f| \eta \\
    &\le \|f\|_\infty\|\eta\|_\infty \left( P(E_t\h)-P(E_t) \right)+\int_{\bd E_t} |F_{h_k}-f| \eta \to 0
\end{split}
\]
applying the dominated convergence theorem and recalling Lemma \ref{lemma a priori estimate}. Thus, $\tilde f=f$.  We then prove \eqref{legge curvatura} by passing to the limit in the Euler-Lagrange equation \eqref{EL equation}.

To prove  \eqref{legge velocita} 	we employ Proposition \ref{proposition 2.2 LucStu}: for every $\eta\in  C^0_c(\R^N\times[0,T))$, by a change of variables we have that
\begin{align*}
    \int_h^T &\left[  \int_{E_t^{ (h) }   } \eta  \ud x-  \int_{ E_{t-h}^{(h)} } \eta  \ud x \right] \ud t=\int_h^T \int_{E_t^{(h)}} \left( \eta (x,t)-\eta (x,t-h) \right)\ud x\ud t-h \int_{E_0} \eta \ud x
\end{align*}
where we have used that $E^{(h)}_t=E_0$ for $t\in [0,h)$.  Therefore, a simple convergence argument yields
\[ \lim_{h\to 0}  \frac 1h \int_h^T \left[  \int_{E_t^{(h)}} \eta  \ud x-  \int_{E_{t-h}^{(h)}}\eta  \ud x \right]\ud t   =-\int_h^T \bd_t \eta (x,t)\ud x\ud t - \int_{E_0} \eta. \]
Combining the previous estimate with Proposition \ref{proposition 2.2 LucStu} and passing to the limit, we obtain \eqref{legge velocita}.	
\end{proof}


\section{Viscosity solutions}
In this section we will prove the existence of another weak notion of solution for the mean curvature flow starting from a compact set.  We will follow the so-called level set approach based on the theory of viscosity solution. We recall that in the first part we work with the standing assumptions of the paper \eqref{standing hp}.  Additionally, we require \eqref{hp lip psi 1}.

\subsection{The discrete scheme for unbounded sets}\label{sect evolution unbounded sets}
In this short subsection we will define the discrete evolution scheme for unbounded sets having compact boundary. The idea would be to define this evolution simply as the complement of the evolution of the complementary set, but since the anisotropies we are considering are not symmetric, we need additional care. 

We recall that, given an anisotropy $\phi,$ we define $\tilde \phi(x,\nu):=\phi(x,-\nu)$. This anisotropy has all the properties of the original one, concerning regularity and bounds. We start remarking the following simple fact. One can see that $\dist^\psi(x,y)=\dist^{\tilde \psi}(y,x), $ since for any curve $\gamma\in W^{1,1}([0,1];\R^N), \gamma(0)=x, \gamma(1)=y,$ a simple change of variable yields
\[ \int_0^1 \psi^\circ(\gamma(t),\dot\gamma(t))\ud t = \int_0^1 \psi^\circ\left(\gamma(1-t),- \frac{\ud}{\ud t}\left( \gamma(1-t)\right)\right)\ud t = \int_0^1 \widetilde{(\psi^\circ)}(\eta(t),\dot\eta(t))\ud t, \]
for $\eta(t)=\gamma(1-t)$, once one sees that $$ \widetilde{(\psi^\circ)}(\cdot,\nu)=\sup_{\psi(\cdot,\xi)\le 1}\xi\cdot (-\nu)=\sup_{\tilde\psi(\cdot,-\xi)\le 1}(-\xi)\cdot \nu=(\tilde\psi)^\circ(\cdot,\nu).$$ Therefore, by definition of signed distance we have
\begin{equation}\label{invers dist}
    \sd^\psi_E(x)=-\sd^{\tilde\psi}_{E^c}(x).
\end{equation}
For every compact set $F$ and $h>0,t\ge 0$, we will denote by $\tilde T_{h,t}^\pm F$ the maximal and the minimal solution to problem \eqref{problema discreto}, according to Lemma \ref{existence discrete pb}  with $\P$ and $\sd^\psi$, respectively, replaced by $P_{\tilde \phi}$ and $\sd^{\tilde \psi}$. Finally, for every set $E$ with compact boundary we define
\begin{equation}\label{pb discr compl}
    T_{h,t}^\pm E:=\left(  \tilde T^\mp_{h,t} E^c \right)^c.
\end{equation}
As in the case for compact sets, we set $T_{h,t} E:=T_{h,t}^- E.$ Given an open, unbounded set $E_0$ having compact boundary, we can then define the discrete flow $\{E_t\h\}_{t\ge 0}$ as follows: $E\h_t:=E_0$ for $t\in [0,h)$ and
\[ E\h_t=T_{h,t} E\h_{t-h},\quad \forall t\in[h,+\infty). \]
 One easily checks that analogous results to Lemmas \ref{comparison principle}, \ref{lemma a priori estimate} and \ref{lemma estimates on balls} hold also for this problem. We state the corresponding results.

 \begin{lemma}\label{comparison principle, unbounded}
     Let $F_1\subseteq F_2$ be open, unbounded sets with compact boundary and fix $h>0,t\ge0$. Then, $T_{h,t}F_1\subseteq T_{h,t}F_2$.
 \end{lemma}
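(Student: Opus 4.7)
The plan is to reduce this statement to the already established comparison principle for bounded sets (Lemma~\ref{comparison principle}) via the complement construction in \eqref{pb discr compl}.

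First, I would observe that since $F_1,F_2$ are open, unbounded sets with compact boundary in $\R^N$, their complements $F_1^c,F_2^c$ are closed, \emph{bounded} sets (indeed, outside a large enough ball containing $\bd F_i$, both $F_i$ and $F_i^c$ are locally constant by connectedness of $\R^N\setminus\overline{B_R}$, and since $F_i$ is unbounded, $F_i^c$ must lie inside this ball). Moreover, $F_1\subseteq F_2$ immediately yields $F_2^c\subseteq F_1^c$.

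Next, recalling the definition \eqref{pb discr compl}, we have
\[
T_{h,t}F_i = T_{h,t}^- F_i = \bigl(\tilde T_{h,t}^+ F_i^c\bigr)^c,\qquad i=1,2,
\]
where $\tilde T_{h,t}^+$ denotes the maximal solution to the incremental problem \eqref{problema discreto} with $P_\phi$ and $\sd^\psi$ replaced by $P_{\tilde\phi}$ and $\sd^{\tilde\psi}$. Since $\tilde\phi,\tilde\psi$ still satisfy the assumptions in Definitions~\ref{def reg ell integr} and~\ref{def anisotropy}, Lemma~\ref{comparison principle} applies verbatim to this modified incremental problem. Applying the comparison statement for the maximal solutions to the ordered pair of bounded sets $F_2^c\subseteq F_1^c$ (with $g_1=g_2$), we deduce
\[
\tilde T_{h,t}^+ F_2^c \,\subseteq\, \tilde T_{h,t}^+ F_1^c.
\]

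Taking complements reverses the inclusion, yielding
\[
T_{h,t} F_1 = \bigl(\tilde T_{h,t}^+ F_1^c\bigr)^c \,\subseteq\, \bigl(\tilde T_{h,t}^+ F_2^c\bigr)^c = T_{h,t} F_2,
\]
which is the claim. I do not foresee any real obstacle; the only point requiring a moment of care is the identity \eqref{invers dist} which guarantees that the minimization defining $T_{h,t}^-F_i$ on unbounded sets is genuinely equivalent (up to passing to the complement) to a minimization over bounded sets for the reflected anisotropies, so that the existing comparison lemma for bounded data can indeed be invoked.
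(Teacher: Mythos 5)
Your proof is correct and supplies exactly the argument the paper leaves implicit (the paper merely states that the analogous lemma ``holds also for this problem'' without writing out a proof). The reduction to the bounded comparison principle via complementation and the reflected integrands $\tilde\phi,\tilde\psi$, together with the observation that the forcing term $g=-F_h(\cdot,t)$ is the same on both sides so the hypothesis $g_1\ge g_2$ of Lemma~\ref{comparison principle} is trivially met, is precisely the intended route.
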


\begin{lemma}
    For any $T>0$ there exists a constant $ C_T(\phi,\psi,f,T)$ such that for every $R>0$ the following holds. If the initial open set $E\supset B_R^c$, then $E_t\h\supset B_{C_T R}^c$ for all $t\in[0,T].$
\end{lemma}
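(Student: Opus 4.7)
The plan is to reduce the claim to the analog of Lemma~\ref{lemma a priori estimate} applied to the $\tilde{}$-problem, exploiting the duality $T_{h,t}^- E = (\tilde T_{h,t}^+ E^c)^c$ given by~\eqref{pb discr compl} and the comparison principle of Lemma~\ref{comparison principle, unbounded}. Since $\tilde\phi$ and $\tilde\psi$ satisfy exactly the same structural hypotheses as $\phi$ and $\psi$, all the discrete-flow estimates proved so far transfer to this setting with essentially identical proofs; I will use them without further comment.

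First I would invoke Lemma~\ref{comparison principle, unbounded} iteratively, together with the assumption $E_0 \supset B_R^c$, to reduce the claim to the specific initial datum $E_0 = B_R^c$: indeed, writing $(B_R^c)_t^{(h)}$ for the corresponding discrete flow, the comparison principle iterated in time gives $E_t^{(h)} \supset (B_R^c)_t^{(h)}$ for every $t \ge 0$. By definition~\eqref{pb discr compl}, $(B_R^c)_t^{(h)}$ equals the complement of the $\tilde{}$-discrete flow $\tilde F_t^{(h)}$ obtained by iteratively applying $\tilde T_{h,\cdot}^+$ to $B_R$. Taking complements, it therefore suffices to control $\tilde F_t^{(h)}$ from above.

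Next, I would apply the $\tilde{}$-analog of Lemma~\ref{lemma a priori estimate}, fixing once and for all a reference radius $R_0 = 1$, to obtain constants $C' = C'(\phi,\psi,f)$ and $h_0 > 0$ such that $\tilde F_t^{(h)} \subset B_{R + C'T}$ for all $t \in [0,T]$, whenever $R \ge 1$ and $h \le h_0$. Taking complements gives $E_t^{(h)} \supset B_{R + C'T}^c$. Setting $C_T := 1 + C'T$, for $R \ge 1$ one has $R + C'T \le C_T R$, which concludes this case. For $R < 1$ I would simply observe that $E_0 \supset B_R^c \supset B_1^c$, apply the previous estimate with $R$ replaced by $1$, and note that the resulting inclusion $E_t^{(h)} \supset B_{C_T}^c \supset B_{C_T R}^c$ still yields the claim.

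The main obstacle, which I expect to be essentially routine, is the verification that Lemma~\ref{lemma a priori estimate} genuinely extends to the $\tilde{}$-problem. The proof of that lemma uses only testing against perturbations of the evolving set by $(1-\varepsilon)B_\rho$, a calibration built from $\nabla_p\phi(x, x/|x|)$, the density estimates for (almost-)minimizers of the $\phi$-perimeter, and the uniform bounds on $\phi, \psi, f$: all these ingredients depend only on the structural bounds of the anisotropy, mobility and forcing, which are preserved under the involution $\phi \mapsto \tilde\phi$, $\psi \mapsto \tilde\psi$. Once this observation is made, the argument above goes through with no further changes.
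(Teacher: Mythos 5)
Your reduction for $R\ge 1$ is correct and is essentially what the paper has in mind (the paper states only that "one easily checks" the analogue and gives no proof): you reduce by comparison to $E_0=B_R^c$, pass to complements via \eqref{pb discr compl}, and invoke the $\tilde\phi$-version of Lemma~\ref{lemma a priori estimate} on the flow $\tilde F_t^{(h)}$ started from $B_R$, which is valid because that lemma only uses minimality (so applies to $\tilde T^+$ as well) and the structural bounds on $\phi,\psi,f$, which are invariant under $\phi\mapsto\tilde\phi$, $\psi\mapsto\tilde\psi$.

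However, your handling of the case $R<1$ is wrong. From $E_0\supset B_R^c\supset B_1^c$ and the $R=1$ estimate you obtain $E_t^{(h)}\supset B_{C_T}^c$, and you then write $B_{C_T}^c\supset B_{C_T R}^c$. This inclusion is reversed: for $R<1$ one has $C_TR<C_T$, so $B_{C_TR}\subsetneq B_{C_T}$ and therefore $B_{C_T}^c\subsetneq B_{C_TR}^c$. The desired inclusion $E_t^{(h)}\supset B_{C_TR}^c$ is strictly stronger than what the $R=1$ estimate gives, so enlarging $R$ to $1$ does not "still yield the claim." Indeed the multiplicative form $B_{C_TR}^c$ does not follow from the additive a priori bound of Lemma~\ref{lemma a priori estimate} once $R$ is small: $R+C'T$ is eventually much larger than $C_TR$ as $R\to 0$. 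To close this case you would need a genuinely different argument showing that small holes shrink (or vanish) under the discrete scheme, e.g.\ a $\tilde\phi$-analogue of the extinction estimate hidden in the proof of Lemma~\ref{lemma estimates on balls}, since for small $R$ the curvature term dominates the forcing. Alternatively, one should note that — as in Lemmas~\ref{lemma estimates on balls} and~\ref{lemma a priori estimate} themselves — the statement is only meaningful (and is only ever used in the paper, cf.\ the proof of Lemma~\ref{comparison principle, bounded-unbounded}) for $R$ bounded below by some $R_0>0$, and then your $R\ge 1$ argument, run with $R_0$ in place of $1$, is complete.
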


\begin{lemma}
		For every $R_0>0$ there exist $h_0(R_0)>0$ and $C(R_0,\phi,\psi,f)>0$ with the following property: For all $R\ge R_0$, $h\in(0,h_0)$, $t>0$ and $x\in\R^N$ one has
		\begin{equation*}
			 T_{h,t}(\left(B_R(x)\right)^c)\subseteq \left( B_{R-Ch}(x)\right)^c.
		\end{equation*}
\end{lemma}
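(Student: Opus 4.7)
The plan is to reduce this statement to Lemma \ref{lemma estimates on balls} via the duality built into the definition of the evolution for unbounded sets. By \eqref{pb discr compl} one has
\[
T_{h,t}\bigl((B_R(x))^c\bigr)=T_{h,t}^-\bigl((B_R(x))^c\bigr)=\bigl(\tilde T_{h,t}^+(B_R(x))\bigr)^c,
\]
so the desired inclusion $T_{h,t}((B_R(x))^c)\subseteq (B_{R-Ch}(x))^c$ is equivalent to
\[
B_{R-Ch}(x)\subseteq \tilde T_{h,t}^+(B_R(x)).
\]
Hence it suffices to produce the claimed inclusion for the maximal solution of the incremental problem with $\phi,\psi$ replaced by $\tilde\phi,\tilde\psi$.

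Next, I would observe that the anisotropies $\tilde\phi(x,\nu):=\phi(x,-\nu)$ and $\tilde\psi(x,\nu):=\psi(x,-\nu)$ satisfy exactly the same hypotheses as $\phi,\psi$: indeed $\tilde\phi\in\mathscr E$ with identical ellipticity and regularity constants, and $\tilde\psi$ is an anisotropy with the same upper and lower bound $c_\psi$. Moreover the forcing term $F_h$ is unchanged in the dual problem. Therefore Lemma \ref{lemma estimates on balls}, applied verbatim to the incremental problem for $\tilde\phi,\tilde\psi$ (for which the minimal solution is denoted $\tilde T_{h,t}^-$), produces constants $h_0(R_0)>0$ and $C(R_0,\phi,\psi,f)>0$ such that for all $R\ge R_0$, $h\in(0,h_0)$, $t>0$ and $x\in\R^N$ one has
\[
\tilde T_{h,t}^-(B_R(x))\supseteq B_{R-Ch}(x).
\]

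To finish, I would use that by construction the maximal solution dominates the minimal one, so
\[
\tilde T_{h,t}^+(B_R(x))\supseteq \tilde T_{h,t}^-(B_R(x))\supseteq B_{R-Ch}(x),
\]
and taking complements we conclude
\[
T_{h,t}\bigl((B_R(x))^c\bigr)=\bigl(\tilde T_{h,t}^+(B_R(x))\bigr)^c\subseteq (B_{R-Ch}(x))^c.
\]

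There is really no obstacle here: the only point worth checking carefully is that the constants $h_0$ and $C$ furnished by Lemma \ref{lemma estimates on balls} depend on $\phi,\psi,f$ only through quantities ($\lambda$, $c_\psi$, $\|f\|_\infty$, the $C^{2,1}$-type bounds on $\phi$) that are invariant under the transformation $\phi\mapsto\tilde\phi$, $\psi\mapsto\tilde\psi$. Once this is observed, the statement follows at once by the duality argument above, and no further analysis of the evolution of complements of balls is needed.
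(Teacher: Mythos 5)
Your proof is correct, and it carries out exactly the duality argument that the paper intends: the paper itself does not prove this lemma but merely remarks that analogues of Lemmas \ref{comparison principle}, \ref{lemma a priori estimate} and \ref{lemma estimates on balls} carry over to the unbounded setting defined via \eqref{pb discr compl}. Your check that $\tilde\phi\in\mathscr E$ and $\tilde\psi$ is an anisotropy with the same constants $\lambda,l,c_\psi$, together with the observation that the maximal solution contains the minimal one so that $\tilde T_{h,t}^+(B_R)\supseteq\tilde T_{h,t}^-(B_R)\supseteq B_{R-Ch}$, is precisely what makes the "verbatim" application of Lemma \ref{lemma estimates on balls} legitimate.
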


We now state a comparison principle between bounded and unbounded sets, following the line of \cite[Lemma 6.10]{ChaMorPon15}.
\begin{lemma}\label{comparison principle, bounded-unbounded}
    Let $E_1$ be a compact set and let $E_2$ be an open, unbounded set, with compact boundary, and such that $E_1\subseteq E_2$. Then, for every $h\in(0,1), t\ge 0$ it holds $T^\pm_{h,t} E_1\subseteq T^\pm_{h,t}E_2.$
\end{lemma}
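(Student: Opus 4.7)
The plan is to reformulate the desired inclusion as a disjointness statement and then argue via a mixed submodularity between $P_\phi$ and $P_{\tilde\phi}$. By the definition \eqref{pb discr compl}, $T^\pm_{h,t}E_2 = (\tilde T^\mp_{h,t}E_2^c)^c$, so the inclusion $T^\pm_{h,t}E_1 \subseteq T^\pm_{h,t}E_2$ is equivalent to $T^\pm_{h,t}E_1 \cap \tilde T^\mp_{h,t}E_2^c = \emptyset$. I would introduce $A$ for the $E_1$-evolution and $D$ for the $E_2^c$-evolution, choosing the extremals so as to exploit minimality at the end: in the ``$+$'' case $A=T^+_{h,t}E_1$ (maximal $\phi$-minimizer) and $D=\tilde T^-_{h,t}E_2^c$ (minimal $\tilde\phi$-minimizer); in the ``$-$'' case, the opposite extremals. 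Both $A$ and $D$ are bounded by Remark \ref{evolution bounded sets}.

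The argument relies on two structural facts. First, $E_1\subseteq E_2$ yields $E_2^c\subseteq E_1^c$, and by \eqref{invers dist}
\[
\sd^\psi_{E_1}+\sd^{\tilde\psi}_{E_2^c} \;=\; \sd^{\tilde\psi}_{E_2^c}-\sd^{\tilde\psi}_{E_1^c}\;\geq\;0
\]
pointwise, since inclusion of sets reverses the ordering of signed distances. Second, applying the standard submodularity \eqref{submod} of $P_\phi$ to the pair $(A,D^c)$ and using the identity $P_\phi(E^c)=P_{\tilde\phi}(E)$ (immediate from $\nu_{E^c}=-\nu_E$ together with $\tilde\phi(x,-p)=\phi(x,p)$), one obtains the \emph{mixed submodularity}
\[
P_\phi(A)+P_{\tilde\phi}(D)\;\geq\;P_\phi(A\setminus D)+P_{\tilde\phi}(D\setminus A).
\]

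The core step is to test the minimality of $A$ against $A\setminus D$ in $\mathscr F^{E_1}_{h,t}$, and of $D$ against $D\setminus A$ in the dual functional $\tilde{\mathscr F}^{E_2^c}_{h,t}$, and then sum the two resulting inequalities. Under the natural convention that the dual functional carries the opposite forcing $-F_h$ (dictated by the complement formulation of the discrete flow: the boundary of $E^c$ moves with the negative of the normal velocity of $E$, under the dual anisotropies $\tilde\phi,\tilde\psi$), the forcing contributions cancel and one is left with
\[
0\;\geq\;\bigl[P_\phi(A)+P_{\tilde\phi}(D)-P_\phi(A\setminus D)-P_{\tilde\phi}(D\setminus A)\bigr]+\frac{1}{h}\int_{A\cap D}\bigl[\sd^\psi_{E_1}+\sd^{\tilde\psi}_{E_2^c}\bigr].
\]
Both summands on the right are non-negative by the two ingredients above, so both must vanish; consequently $A\setminus D$ is itself a minimizer of $\mathscr F^{E_1}_{h,t}$ and $D\setminus A$ is a minimizer of $\tilde{\mathscr F}^{E_2^c}_{h,t}$. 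Disjointness $A\cap D=\emptyset$ now follows by invoking minimality of the smaller extremal: in the ``$+$'' case, $D=\tilde T^-_{h,t}E_2^c$ is the smallest minimizer, so $D\subseteq D\setminus A$; in the ``$-$'' case, $A=T^-_{h,t}E_1$ is the smallest minimizer, so $A\subseteq A\setminus D$.

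The main obstacle I anticipate lies precisely in the forcing convention used in the definition of $\tilde T_{h,t}$ right before \eqref{pb discr compl}: the cancellation in the sum of minimality inequalities requires that the dual problem carry $-F_h$ rather than $F_h$. This is the natural convention that makes the complement formulation of the unbounded-set flow consistent with the inhomogeneous law \eqref{smooth law} for the original set; without this sign flip a residual term $-2\int_{A\cap D}F_h$ of indefinite sign would appear and would not be controlled by the non-negative perimeter and signed-distance contributions.
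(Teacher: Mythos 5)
Your proof takes a genuinely different route from the paper's, and it is essentially correct once the forcing convention is settled. The paper's proof proceeds by localizing the complement problem (showing $\tilde T^-_{h,t}E_2^c \subseteq B_{C_TR}$ via Lemmas~\ref{comparison principle} and~\ref{lemma a priori estimate}), then rewriting $T^+_{h,t}E_2$ as the maximal minimizer of a $\phi$-perimeter functional over the constrained class $\{\tilde E:\tilde E^c\subseteq B_{C_TR}\}$ with the truncated signed distance $\sd^\psi_{E_2}\chi_{B_{C_TR}}$, after which it invokes the standard submodularity comparison as in Lemma~\ref{comparison principle}. Your route avoids the localization and truncation entirely: you reformulate the inclusion as disjointness $T^\pm_{h,t}E_1\cap\tilde T^\mp_{h,t}E_2^c=\emptyset$, test minimality of $A$ against $A\setminus D$ and of $D$ against $D\setminus A$, and close via the mixed inequality $P_\phi(A)+P_{\tilde\phi}(D)\geq P_\phi(A\setminus D)+P_{\tilde\phi}(D\setminus A)$ (correctly derived from \eqref{submod} applied to $(A,D^c)$ and $P_\phi(G^c)=P_{\tilde\phi}(G)$), together with $\sd^\psi_{E_1}+\sd^{\tilde\psi}_{E_2^c}\geq 0$ from \eqref{invers dist}. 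The conclusion via the extremal choice ($D\subseteq D\setminus A$ when $D$ is minimal, $A\subseteq A\setminus D$ when $A$ is minimal) is sound. What your approach buys is a shorter, more symmetric argument that does not need the $B_{C_TR}$ bound; what the paper's buys is that it phrases everything as a single $\phi$-problem to which Lemma~\ref{comparison principle} formally applies.

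Your remark about the forcing sign is on point, and in fact applies equally to the paper's own argument. As written, the definition of $\tilde T_{h,t}$ right before \eqref{pb discr compl} only replaces $\P,\sd^\psi$ by $P_{\tilde\phi},\sd^{\tilde\psi}$ while keeping $-\int_E F_h$; carrying this through the paper's change of variables in \eqref{eq 6.10 Nonlocal}, the effective forcing of the transformed problem becomes $+\int_{\tilde E}F_h\chi_{B_{C_TR}}$, opposite in sign to the $-\int_E F_h$ of the $E_1$-problem, and the submodularity step then leaves a residual $2\int_{A\setminus B}F_h$ of indefinite sign — the exact counterpart of the residual $-2\int_{A\cap D}F_h$ you flagged. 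The consistent convention is the one you identify: $\tilde T_{h,t}$ should also carry $F_h$ replaced by $-F_h$. This is also what the evolution law dictates: if $\bd E_t$ moves by $V=\psi(x,\nu_{E_t})(-H^\phi_{E_t}+f)$, then, using $\nu_{E^c}=-\nu_E$, $\psi(\cdot,\nu_E)=\tilde\psi(\cdot,\nu_{E^c})$, $H^\phi_{E^c}=-H^{\tilde\phi}_E$ and $V_{E^c}=-V_E$, the complement moves by $V=\tilde\psi(x,\nu_{E^c_t})(-H^{\tilde\phi}_{E^c_t}-f)$, with the forcing flipped alongside $\phi\to\tilde\phi$, $\psi\to\tilde\psi$. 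With that fix both your proof and the paper's close; so this is a convention issue in the paper's definition rather than a gap in your argument.
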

\begin{proof}
    We fix $h\in(0,1),t\in[0,T]$ for $T>0$. Set $R>0$ such that $E_1,E_2^c\subseteq B_R$ and note that by Lemmas \ref{comparison principle} and \ref{lemma a priori estimate} (applied to $P_{\tilde\phi}$ instead of $\P$) we get 
    \begin{equation}\label{eq 6.9 Nonlocal}
        \left( T^+_{h,t}E_2 \right)^c\subseteq \tilde T^-_{h,t}E_2^c\subseteq T^-_{h,t} B_R\subseteq B_{C_TR},
    \end{equation}
    for some $C_T(\phi,\psi,f,T)$. Since $\tilde T^-_{h,t}E_2^c$ is the minimal solution of
    \[\min\left\lbrace P_{\tilde\phi}(E) + \dfrac 1h \int_{E} \sd^{\tilde \psi}_{E^c_2}(x)\ud x - \int_{E} F_h(x,t)\ud x \right\rbrace,\]
    considering the change of variables $\tilde E=E^c$ and using \eqref{invers dist}, we easily conclude that $T^+_{h,t}E_2=\left( \tilde T^-_{h,t}E_2^c \right)^c$ is the maximal solution of
    \[\min\left\lbrace P_{\phi}(\tilde E) + \dfrac 1h \int_{B_{C_TR}} \sd^{\psi}_{E_2} -\dfrac 1h \int_{\tilde E^c} \sd^\psi_{E_2}- \int_{\tilde E^c} F_h(x,t)\ud x \right\rbrace- \dfrac 1h \int_{B_{C_TR}} \sd^{\psi}_{E_2}.\]
    we then note that 
    \[  \int_{B_{C_TR}} \sd^{\psi}_{E_2}= \int_{\tilde E} \sd^{\psi}_{E_2}\chi_{B_{C_TR}}+ \int_{\tilde E^c} \sd^{\psi}_{E_2}, \]
    for every $\tilde E$ such that $\tilde E^c\subseteq B_{C_TR}.$ By \eqref{eq 6.9 Nonlocal}, we conclude that $T_{h,t}^+E_2$ is the maximal solution of
     \begin{equation}\label{eq 6.10 Nonlocal}
         \min\left\lbrace P_{\phi}(\tilde E) + \dfrac 1h \int_{\tilde E} \sd^\psi_{E_2}\chi_{B_{C_TR}}- \int_{\tilde E^c} F_h(x,t)\ud x\ :\ \tilde E^c\subseteq B_{C_TR} \right\rbrace.
     \end{equation}
    Analogously, one proves that $T_{h,t}^-E_2$ is the minimal solution of \eqref{eq 6.10 Nonlocal}. Finally, we remark that $\sd^\psi_{E_s}\chi_{B_{C_TR}}\le \sd^\psi_{E_1}$ and that $T_{h,t}^\pm E_1\cup T_{h,t}^\pm E_2, T_{h,t}^\pm E_1\cap T_{h,t}^\pm E_2 $ are both admissible competitors for \eqref{eq 6.10 Nonlocal}, one argues exactly as in the proof of Lemma \ref{comparison principle} to conclude $T_{h,t}^\pm E_1\subseteq T_{h,t}^\pm E_2.$
\end{proof}

\subsection{The level set approach}
 {We recall that in this section we assume  \eqref{standing hp}, \eqref{hp lip psi 1}. }Consider a function $u:\R^{N}\times [0,+\infty)\to \R$ whose  {spatial} superlevel sets  {$\{u(\cdot,t)\ge s\}$}  evolve according to the mean curvature equation 
\[ V(x,t)=-\psi(x,\nu_{\{u(\cdot,t)\ge s \}})\left( H^\phi_{\{u(\cdot,t)\ge s  \}}(x)-f(x,t)\right)\quad {\text{for }x\in\bd \{ u(\cdot,t)\ge s\}}. \]
The function $u$ then satisfies (recalling that $-\nabla u/|\nabla u|$ is the \textit{outer normal vector} to the superlevel set $\{u(\cdot,t)\ge u(x,t)\}$) the equation
\begin{align*}
	\bd_t u=|\nabla u|V(x)&=-\psi(x,-\nabla u) \left( H^\phi_{\{u(\cdot,t)\ge u(x,t)\}}(x)-f(x,t)\right)\nonumber\\
	&=-\psi(x,-\nabla u) \left(\div\nabla_p \phi(x,-\nabla u) -f(x,t)\right)\nonumber\\
	&=-\psi(x,-\nabla u) \left( \sum_i \bd_{x_i}\bd_p\phi(x,-\nabla u)-\nabla_p^2\phi(x,-\nabla u)\,:\, \nabla^2 u  -f(x,t)\right)\nonumber\\
	:&=-\psi(x,-\nabla u)\left(  H(x,\nabla u,\nabla^2 u)-f(x,t) \right),
\end{align*}
where we defined the Hamiltonian  $H\ :\ \R^N\times \R^N\setminus\{0\}\times  {Sym_N}\to \R$ as 
\begin{equation}
	\label{def hamiltonian}
	H(x,p,X):= \sum_i \bd_{x_i}\bd_{p_i}\phi(x,-p)-\nabla_p^2\phi(x,-p)\,:\, X.
\end{equation}
We therefore focus on solving the parabolic Cauchy problem 
\begin{equation}\label{Hamilton-Jacobi eq}
	\begin{cases}
		\bd_t u+\psi(x,-\nabla u)\left(  H(x,\nabla u,\nabla^2 u)-f(x,t) \right)=0\\
		u(\cdot,t)=u_0.
	\end{cases}
\end{equation}
The appropriate setting for  this type of geometric evolution equations is the one of viscosity solutions, in the framework of \cite{GigGotIshSat, IshSou} (see also \cite{ChaMorPon15}).  We will focus on the evolution of sets with compact boundary on compact time intervals of the form $[0,T]$. We now define the notion of admissible test function.  {In the following, with a small abuse of language, we will say that a function $u:\R^n\times [0,T]\to \R$  is constant outside a compact set if there exists a compact set $K\subset \R^N$ such that $u(\cdot, t)$ is constant in $\R^N\setminus K$ for every 
$t\in [0,T]$ (with the constant possibly
depending on $t$).}

\begin{defin}\label{def visco sol}
	Let $\hat z=(\hat x,\hat t)\in\R^N\times (0,T)$ and let $A\subseteq (0,T)$ be any open interval containing $\hat t$. We will say that $\eta \in C^0(\R^N\times \bar A)$ is admissible at the point $\hat z$ if it is of class $C^2$ in a neighborhood of $\hat z$, if it is constant out of a compact set, and,  in case $\nabla \eta (\hat z)=0,$ the following holds: for all $(x,t)\in\R^N\times A$, and there exist numbers $a,b>0$ such that
    \[  {|\eta (x,t)-\eta (\hat z)-\eta _t(\hat z)(t-\hat t)| \le a |x-\hat x|^3+ b|t-\hat t|^2. }\] 
\end{defin}

 {We then recall one of the equivalent definitions of viscosity solutions.}

\begin{defin}
	An upper semicontinuous function $u : \R^N\times [0,T]\to \R$ (in short, $u\in usc(\R^N\times [0,T])$), constant outside a compact set, is a viscosity subsolution of the Cauchy problem \eqref{Hamilton-Jacobi eq} if $u(\cdot,0)\le u_0$ and for all $z:=(x,t)\in\R^N\times  {(0,T)}$ and all $C^\infty-$test functions $\eta $ such that $\eta $ is admissible at $z$ and $u-\eta $ has a maximum at $z$ (in the domain of definition of $\eta $) the following holds:
	\begin{itemize}
		\item[i)] If $\nabla \eta (z)=0$, then it holds
		\begin{equation}\label{eq viscosa degen}
			\eta _t(z)\le 0
		\end{equation}
		\item[ii)] If $\nabla \eta (z)\neq 0$, then 
		\begin{equation}\label{eq viscosa}
			\bd_t \eta (z)+\psi(z,-\nabla \eta (z)) \left( H(z,\nabla \eta (z),\nabla^2 \eta (z))-f(z,t)  \right) \le 0.
		\end{equation}  
	\end{itemize}
	A lower semicontinuous function $u : \R^N\times [0,T]\to \R$ (in short, $u\in lsc(\R^N\times [0,T])$), constant outside a compact set, is a viscosity supersolution of the Cauchy problem \eqref{Hamilton-Jacobi eq} if $u(\cdot,0)\ge u_0$ and for all $z:=(x,t)\in\R^N\times[0,T]$ and all $C^\infty-$test functions $\eta $ such that $\eta $ is admissible at $z$ and $u-\eta $ has a minimum at $z$ (in the domain of definition of $\eta $) the following holds:
	\begin{itemize}
		\item[i)] If $\nabla \eta (z)=0$, then $\eta _t(z)\ge 0$;
		\item[ii)] If $\nabla \eta \neq 0$ then 
		$$\bd_t \eta (z)+\psi(z,-\nabla \eta (z)) \left( H(z,\nabla \eta (z),\nabla^2 \eta (z))-f(z,t)  \right) \le 0. $$
	\end{itemize}
	Finally, a function $u$ is a viscosity solution for the Cauchy problem \eqref{Hamilton-Jacobi eq} if it is both a subsolution and a supersolution of \eqref{Hamilton-Jacobi eq}.
\end{defin}

\begin{oss*}
	By classical arguments, one could assume that the maximum of $u-\eta$  is strict in the definition of subsolution above (an analogous remark holds for supersolutions).
\end{oss*}

\begin{oss*}
    We remark that, if $-u$ is a subsolution to \eqref{Hamilton-Jacobi eq} with initial datum $-u_0$, then $u$ is a supersolution for \eqref{Hamilton-Jacobi eq} for the initial datum $u_0$ and where $\phi,\psi$ are replaced by $\tilde \phi,\tilde \psi$ respectively, as defined in Section \ref{sect evolution unbounded sets}.
\end{oss*}

We will first prove existence for viscosity solutions of \eqref{Hamilton-Jacobi eq} via an approximation-in-time technique, and then prove uniqueness of solutions to \eqref{Hamilton-Jacobi eq} to link the approximate solution to the mean curvature flow equation.   We would like to proceed with the classical construction of e.g. \cite{Cha,ChaMorPon15,EtoGigIsh}, but in our case the lack of continuity of the evolving functions forces us to be particularly careful with the procedure.

We use the shorthand notation of \textit{lsc} for lower semicontinuous and \textit{usc} for upper semicontinuous. Given a bounded, \textit{usc} function $v$ which is constant outside a compact set, we define the transformation 
\begin{equation}\label{def increm supersol}
    T_{h,t}^+v(x)=\sup\left\lbrace s\ :\ x\in T^+_{h,t}\{ v\ge s \} \right\rbrace.
\end{equation}
Firstly, we see that $T_{h,t}^+v(x)\in\R,$ as $v$ is bounded. Moreover, it turns out that the function $T^+_{h,t}v$ is \textit{usc}, bounded and constant outside a compact set. Indeed, definition \eqref{def increm supersol} is equivalent to
\begin{equation*}
    T^+_{h,t}v(x)=\inf\left\lbrace s\ :\ x\notin T^+_{h,t}\{ v\ge s \} \right\rbrace=\inf_{s\in\R}\left( s+\mathds{1}_{\left(T^+_{h,t}\{ v\ge s \}\right)^c}(x)\right), 
\end{equation*}
where $\mathds 1_A(x)$ is the indicatrix function of a set $A$, being 0 on the set and $+\infty$ outside. By definition,  $\mathds1_A$ is an \textit{usc} function for any open set $A$. Thus, recalling Remark \ref{open close min max sols}, in the equation above we are taking the infimum of a family of \textit{usc} functions, which is then a \textit{usc} function. The other two properties follows from the previous study of the discrete evolution. Analogously, given a bounded \textit{lsc} function $g$, we define 
\begin{equation}
    T_{h,t}^-g(x)=\sup\left\lbrace s\ :\ x\in T^-_{h,t}\{ g> s \}\right\rbrace= \sup_{s\in\R}\left( s-\mathds{1}_{T^-_{h,t}\{ g>s \}} \right),
\end{equation}
which is now  a bounded \textit{lsc} function (as $\sup$ of \textit{lsc} functions), constant outside a compact set.

We are now ready to give the definition of the discrete-in-time approximations of sub {-} and super solution to \eqref{Hamilton-Jacobi eq}. Given an initial compact set $E_0$, set $u_0$ as a (uniformly) continuous function, spatially constant outside a compact set, such that $\{u_0\ge 0\}=E_0$. We remark that for every $s\in\R$, the superlevel set $\{u_0\ge s\}$ is either compact or it is unbounded with compact boundary. Then, for $h>0$ we introduce the following family of maps as $u^\pm_h(\cdot,t)=u_0$ for $t\in[0,h)$ and
\begin{equation}\label{def subsol iter}
    u^\pm_h(\cdot,t):=T^\pm_{h,t-h}u^\pm_h(\cdot,t-h)\quad \text{for }t\ge h.
\end{equation}
We easily see that the maps above are functions (as implied by the comparison principle contained in Lemmas \ref{comparison principle}, \ref{comparison principle, unbounded} and \ref{comparison principle, bounded-unbounded}) piecewise constant in time (as $T_{h,t}^\pm=T_{h,[t/h]h}^\pm$). Moreover, by the previous remarks, we have that $u^+_h(\cdot,t)$ is an \textit{usc} function, while $u^-_h(\cdot,t)$ is a \textit{lsc} function, for every $t\in[0,+\infty)$. Some further properties of the approximating scheme are listed below.
\begin{lemma}
    For any $h>0$, $t\ge 0$ we have the following. It holds 
    \begin{equation}\label{ineq subsol}
        u^-_h(\cdot,t)\le u^+_h(\cdot,t).
    \end{equation}
    Furthermore, given any $\lambda\in\R$ and $t\ge h$ it holds 
    \begin{align}
        \{ u_h^+(\cdot,t)> \lambda \} \subseteq T^+_{h,t-h}\{ u_h^+(\cdot,t-h)&\ge \lambda  \}\subseteq \{ u_h^+(\cdot,t) \ge \lambda \}\label{inclusion level sets}\\
        \{ u_h^-(\cdot,t)> \lambda \} \subseteq T^-_{h,t-h}\{ u_h^-(\cdot,t-h)&> \lambda  \}\subseteq \{ u_h^-(\cdot,t) \ge \lambda \}\nonumber.
    \end{align}
\end{lemma}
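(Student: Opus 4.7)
My plan is to argue by induction on the discrete time step $kh$, using only the definitions of $u_h^\pm$ together with the monotonicity of the incremental operators $T_{h,t}^\pm$ provided by the weak comparison principles (Lemmas~\ref{comparison principle}, \ref{comparison principle, unbounded}, and \ref{comparison principle, bounded-unbounded}). Since $T_{h,t}^\pm = T_{h,[t/h]h}^\pm$, it suffices to check everything at times of the form $t=kh$. The two level-set double inclusions are really formal consequences of the sup-representation of $u_h^\pm(\cdot,t)$; only the inequality $u_h^-\le u_h^+$ needs the inductive hypothesis.

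I would start with the level-set statement for $u_h^+$, which is shortest. For the first inclusion, suppose $u_h^+(x,t)>\lambda$. By the very definition \eqref{def increm supersol} there exists $s>\lambda$ with $x\in T_{h,t-h}^+\{u_h^+(\cdot,t-h)\ge s\}$; since $s>\lambda$ gives $\{u_h^+(\cdot,t-h)\ge s\}\subseteq \{u_h^+(\cdot,t-h)\ge \lambda\}$, the monotonicity of $T_{h,t-h}^+$ (maximal solution version of the comparison principle) yields $x\in T_{h,t-h}^+\{u_h^+(\cdot,t-h)\ge\lambda\}$. For the second inclusion, if $x$ lies in $T_{h,t-h}^+\{u_h^+(\cdot,t-h)\ge\lambda\}$, then $\lambda$ is itself admissible in the sup defining $u_h^+(x,t)$, so $u_h^+(x,t)\ge\lambda$. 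The analogous argument for $u_h^-$ is identical, except that one uses the strict superlevel sets $\{u_h^-(\cdot,t-h)>s\}$ and the monotonicity of the minimal solution; note that the superlevel sets of $u_h^\pm(\cdot,t-h)$ are, by induction, either compact or open unbounded with compact boundary (since $u_h^+$ is usc, $u_h^-$ is lsc, and both are constant outside a compact set), so the comparison principle applies in exactly the form needed.

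For the ordering $u_h^-(\cdot,t)\le u_h^+(\cdot,t)$, I would argue by induction on $k=[t/h]$, the base case $k=0$ being $u_h^\pm(\cdot,t)=u_0$. Assuming the inequality holds at time $t-h$, fix $x\in\R^N$ and any $s$ admissible in the sup defining $u_h^-(x,t)$, i.e.\ $x\in T_{h,t-h}^-\{u_h^-(\cdot,t-h)>s\}$. The induction gives
\[
\{u_h^-(\cdot,t-h)>s\}\subseteq\{u_h^+(\cdot,t-h)>s\}\subseteq\{u_h^+(\cdot,t-h)\ge s\},
\]
and two successive applications of the comparison principles --- first monotonicity of $T_{h,t-h}^-$, then the trivial inclusion $T_{h,t-h}^-F\subseteq T_{h,t-h}^+F$ --- give $x\in T_{h,t-h}^+\{u_h^+(\cdot,t-h)\ge s\}$. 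Hence $s$ is admissible in the sup defining $u_h^+(x,t)$, so $u_h^+(x,t)\ge s$; taking the sup over admissible $s$ yields $u_h^+(x,t)\ge u_h^-(x,t)$.

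I do not expect a serious obstacle. The only point requiring a little care is verifying that at each step the relevant superlevel sets have the regularity demanded by the comparison principle (compactness of the moving set or compactness of its boundary). This is already guaranteed by the construction: $u_0$ is continuous and constant outside a compact set, and by induction $u_h^\pm(\cdot,t-h)$ inherit (upper/lower) semicontinuity and the ``constant outside a compact set'' property, as discussed in the paragraph preceding the lemma. Once these structural properties are in hand, the argument is entirely formal.
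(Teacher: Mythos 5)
Your proof is correct and follows essentially the same route as the paper: induction on the discrete time step, the sup-representation of $u_h^\pm$, and the comparison principles (Lemmas~\ref{comparison principle}, \ref{comparison principle, unbounded}, \ref{comparison principle, bounded-unbounded}) applied to pass between level sets of $u_h^-$ and $u_h^+$ and between minimal and maximal solutions. The paper's proof is terser (it spells out only the base case for \eqref{ineq subsol} and notes that \eqref{inclusion level sets} follows from \eqref{def subsol iter}), but the underlying argument is the same.
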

\begin{proof}
    Fix $x\in \R^N$, $t\in[0,h)$. For any given $\sigma<u_h^-(x,h)$ we have that there exists a sequence $(s_n)\nearrow \sigma$ so that $x\in T^-_{h,t-h}\{ u_0>s_n \}\subseteq T^+_{h,t-h}\{ u_0\ge s_n \}.  $ Thus, $u_h^+(x,t)\ge \sigma.$ We then conclude by induction.  Then, \eqref{inclusion level sets} follows easily by the definition \eqref{def subsol iter}.
\end{proof}

We  then prove that the half-relaxed limits (in the spirit of \cite{BarSonSou}, see also the references therein) of the families of functions $u_h^\pm$
\begin{equation}\label{def subsol supersol}
	\begin{split}
		u^+(x,t):=& \sup_{(x_h,t_h)\to (x,t)}\limsup_{h\to 0} u_h^+(x_h,t_h)  \\
		u^-(x,t):=& \inf_{(x_h,t_h)\to (x,t)}\liminf_{h\to 0} u_h^-(x_h,t_h),
	\end{split}
\end{equation}
are (respectively) sub {-} and supersolutions in the viscosity sense of \eqref{Hamilton-Jacobi eq}, see Theorem \ref{teo sol viscosa} (note that, by definition, $u^+$ is \textit{usc}, while $u^-$ is \textit{lsc}). The proof of this result is the subject of the following section and we recall that the hypothesis required are \eqref{standing hp}, \eqref{hp lip psi 1} and $f\in C^0(\R^N\times [0,\infty))$ only.  Once the existence of sub {-} and super-solutions to the equation is settled, we need to properly define the notion of level-set solution to the mean curvature flow. To do so, we first prove  uniqueness for \eqref{Hamilton-Jacobi eq} via a comparison principle and under additional hypothesis.  Then, we show that the evolution of the zero superlevel set of the solution does not depend on the choice of the initial function $u_0$. 

We start with a comparison result between $u^+,u^-$ and $u_0$ at the initial time: it will ensure that the classical hypothesis for the comparison principle are satisfied.
We  first prove an estimate for the speed of decay of the level sets of the evolving functions. While it will only be needed in the following section, in the proof of the forthcoming Lemma \ref{lemma initial datum} we will use similar techniques, so we preferred to state it here.
\begin{lemma}
	\label{lemma estimate decay balls}
	Let $u^+(x,t)$ be the function defined in \eqref{def subsol supersol}, let $\sigma\in\R$. Assume that, for a suitable $x_0$ and  $R>0$, it holds $B(x_0,R)\subseteq \{ u^+(\cdot,t_0)\ge \sigma \}$ . Then, there exists $C=C(R,\phi,\psi,f)$ such that $B(x_0,R-C(t-t_0))\subseteq \{ u^+(\cdot,t)\ge \sigma  \}$ for every $t\le t_0+R/(2C)$. An analogous statement holds for $u^-$ by considering its open sublevel sets.
\end{lemma}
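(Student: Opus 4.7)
My plan is to transfer the superlevel inclusion from the half-relaxed limit $u^+$ down to the discrete scheme, where Lemma~\ref{lemma estimates on balls} provides a sharp geometric bound on the inward propagation speed of balls, and then pass back to $u^+$ through its definition as a half-relaxed upper envelope. A preliminary reduction, based on the usc of $u^+$ (which makes $\{u^+(\cdot,t)\ge s\}$ closed for every $s$), shows that it suffices to prove the inclusion for every $\sigma'<\sigma$ and every $\rho<R$, with $C=C(\rho/2,\phi,\psi,f)$ as in Lemma~\ref{lemma estimates on balls}; the full statement then follows by letting $\sigma'\nearrow\sigma$ and $\rho\nearrow R$.

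The core step is to produce, for $h$ small along a subsequence, a discrete time $\tau_h\to t_0$ such that $\overline{B(x_0,\rho)}\subseteq \{u_h^+(\cdot,\tau_h)\ge\sigma'\}$. For every $y\in\overline{B(x_0,\rho)}\subset B(x_0,R)$ we have $u^+(y,t_0)\ge \sigma>\sigma'$, so the definition of $u^+$ yields, for each $y$ and each $\delta>0$, some $h\le\delta$ and a point near $(y,t_0)$ at which $u_h^+>\sigma'$. Exploiting (i) the piecewise-constancy of $u_h^+$ in time on the grid $h\Z$, (ii) the usc of each $u_h^+(\cdot,s)$ (so that pointwise control on a dense subset of $\overline{B(x_0,\rho)}$ extends to the whole closed ball), and (iii) a Vitali covering of $\overline{B(x_0,\rho)}$ together with a diagonal extraction across the cover, one aligns these local controls at a single discrete time $\tau_h$.

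Once the discrete inclusion at time $\tau_h$ is available, the comparison principle (Lemma~\ref{comparison principle}), the inclusion~\eqref{inclusion level sets}, and Lemma~\ref{lemma estimates on balls} give, for every integer $n\ge 1$ with $\rho-nCh>\rho/2$, that $B(x_0,\rho-nCh)\subseteq\{u_h^+(\cdot,\tau_h+nh)\ge\sigma'\}$. For any $z\in B(x_0,\rho-C(t-t_0))$, setting $n_h:=\lfloor(t-\tau_h)/h\rfloor$ one has $\tau_h+n_h h\to t$ and $\rho-Cn_h h\to \rho-C(t-t_0)>|z-x_0|$, so $u_h^+(z,\tau_h+n_h h)\ge \sigma'$ for all sufficiently small $h$; by the definition of $u^+$ this gives $u^+(z,t)\ge \sigma'$, as desired.

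The hard part is the transfer step: at the initial time $t_0=0$ it is immediate because $u_h^+(\cdot,0)=u_0$ exactly, which is the content of the forthcoming Lemma~\ref{lemma initial datum}; at a generic $t_0>0$ the convergence $u_h^+(\cdot,t_0)\to u^+(\cdot,t_0)$ is only one-sided and pointwise, and promoting a pointwise superlevel inequality on a whole ball into a set inclusion at the discrete level at a single time $\tau_h$ is where the delicate combinatorial argument (diagonal plus covering, leveraging the temporal discreteness of the scheme and the usc of $u_h^+$) is needed.
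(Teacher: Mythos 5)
Your proof follows the paper's overall structure: iterate the discrete ball estimate of Lemma~\ref{lemma estimates on balls} together with the weak comparison principle and the level-set inclusions~\eqref{inclusion level sets}, then pass to the half-relaxed limit. You are right to single out the transfer step — from the hypothesis $B(x_0,R)\subseteq\{u^+(\cdot,t_0)\ge\sigma\}$ to an inclusion $B(x_0,\rho)\subseteq\{u_h^+(\cdot,\tau_h)\ge\sigma'\}$ at some grid time $\tau_h\to t_0$ — as the genuinely delicate point. The paper's own proof asserts this transfer in the sentence ``By assumption, for any $R_0<R$, if $h$ is small enough, we have $B(x_0,R_0)\subseteq\{u_h^+(\cdot,t_0)\ge\sigma\}$,'' which does not follow from the hypothesis when $t_0>0$: the half-relaxed upper envelope $u^+$ can strictly dominate $\limsup_h u_h^+(\cdot,t_0)$, so a pointwise lower bound on $u^+$ on a ball need not propagate to the discrete functions along the whole sequence.

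That said, I do not think your proposed fix closes the gap. The definition of $u^+(y,t_0)\ge\sigma$ yields, for each $y$, a $y$-dependent subsequence of $h$'s and $y$-dependent space-time points $(y_h,s_h)$ where $u_h^+>\sigma'$. The upper semicontinuity of $u_h^+(\cdot,s)$ upgrades a lower bound from a dense subset to the whole closed ball only once that lower bound is available at a single fixed pair $(h,s)$; and neither a finite subcover of $\overline{B(x_0,\rho)}$ nor a diagonal extraction aligns the different $y$'s onto one common $(h_k,\tau_k)$, because the ``good'' $h$'s for distinct centers $y_i$ may form entirely disjoint subsequences (being large along one subsequence of $h$ says nothing about others). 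To make the alignment work one would need an additional input — a modulus of continuity of $u_h^+$ uniform in $h$ (which the paper explicitly says is not available), a semigroup-type estimate comparing $u_h^+(\cdot,\tau)$ and $u_h^+(\cdot,\tau')$ for nearby grid times, or a geometric bootstrapping argument promoting membership of a single point in $\{u_h^+(\cdot,\tau)\ge\sigma'\}$ to inclusion of a ball of definite radius. None of these is supplied by the combination you invoke. Note that at $t_0=0$ the transfer is trivial because $u_h^+(\cdot,0)=u_0$ identically, and that is indeed the setting of Lemma~\ref{lemma initial datum}, where the iteration is carried out directly at the discrete level rather than by citing this lemma; for $t_0>0$ the gap remains — both in your proposal and, as far as I can tell, in the paper's own proof as written.
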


\begin{proof}	
    We focus on the case $\{ u^+(\cdot,t_0)\ge \sigma \}$ bounded, the other case being analogous.  By assumption, for any $R_0<R$, if $h$ is small enough, we have $B(x_0,R_0)\subseteq \{ u_h^+(\cdot,t_0)\ge\sigma  \}$. Set $C=C(R_0/2,\phi,\psi,f)$ as the constant of Lemma \ref{lemma estimates on balls}.  Let $R_n$ be defined recursively following law \eqref{evolution law ball}, that is $R_{n+1}=R_n-Ch$, as long as $R_n\ge R_0/2$. By simple iteration we find that $R_n=R_0-n Ch,$ 	as long as $R_n\ge R_0/2$, which can be ensured enforcing $h\,n\le {R_0}/(2C). $ Therefore, for any $t\ge t_0$ such that $t-t_0\le R_0/(2C)$, we set $n=[(t-t_0)/h]$ and send $h\to 0$ to deduce (recalling also Lemma \ref{comparison principle})
	\[ \{ u^+(\cdot,t)\ge \sigma  \} \supset B(x_0,R_0-C (t-t_0)).  \]
	Since the choice of $R_0$ is arbitrary, we conclude.
\end{proof}

We are now ready to prove a comparison result for the functions $u^\pm$ and a continuity estimate at the initial time $t=0$.

\begin{lemma}\label{lemma initial datum}
    For any $(x,t)\in \R^N\times [0,+\infty)$ it holds
    \[  u^-(x,t)\le u^+(x,t). \]
    Moreover $u^-(\cdot,0)= u^+(\cdot,0)= u_0,$ so that there exists a modulus of continuity $\omega$ such that $\forall x,y\in \R^N$
    \[ u^+(x,0)-u^-(y,0)\le \omega(|x-y|). \]
\end{lemma}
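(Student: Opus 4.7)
The strategy is to treat the two assertions separately. The inequality $u^-\le u^+$ follows from \eqref{ineq subsol} and the definitions \eqref{def subsol supersol}: fixing $(x,t)$ and any sequence $(x_h,t_h)\to(x,t)$, one has $\liminf_h u_h^-(x_h,t_h)\le \limsup_h u_h^+(x_h,t_h)$, and taking the infimum on the left and the supremum on the right over all such sequences yields $u^-(x,t)\le u^+(x,t)$. For the identification $u^\pm(\cdot,0)=u_0$, using this first inequality it will suffice to prove the two one-sided bounds $u^+(x,0)\le u_0(x)\le u^-(x,0)$ pointwise; the modulus-of-continuity estimate at the end is then immediate from the uniform continuity of $u_0$ (which is continuous and constant outside a compact set), since $u^+(x,0)-u^-(y,0)=u_0(x)-u_0(y)$.

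To prove $u^+(x,0)\le u_0(x)$, the plan is to argue at the level of superlevel sets. Fix $\e>0$ and, by continuity of $u_0$ at $x$, choose $r>0$ such that $F:=\{u_0\ge u_0(x)+\e/2\}$ is disjoint from $\overline{B(x,r)}$. A telescoping choice of levels $\lambda_k=u_0(x)+\e/2+k\e/(2n)$, combined with the inclusion $\{u_h^+(\cdot,t)>\mu\}\subseteq T^+_{h,t-h}\{u_h^+(\cdot,t-h)\ge\mu\}$ from \eqref{inclusion level sets}, gives after $n$ iterations
\[ \{u_h^+(\cdot,nh)\ge u_0(x)+\e\}\subseteq T^+_{h,(n-1)h}\circ\cdots\circ T^+_{h,0}(F). \]
Since $F\subseteq\R^N\setminus \overline{B(x,r)}$, combining the appropriate comparison principle (Lemma \ref{comparison principle, bounded-unbounded} when $F$ is bounded and Lemma \ref{comparison principle, unbounded} when $F$ has bounded complement) with the discrete ball-complement estimate from Section \ref{sect evolution unbounded sets}, and iterating, one obtains
\[ T^+_{h,(n-1)h}\circ\cdots\circ T^+_{h,0}(F)\subseteq \R^N\setminus B(x,r-Cnh), \]
for $h$ small and $n\le r/(2Ch)$, with $C=C(r/2,\phi,\psi,f)$. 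For any sequence $(x_h,t_h)\to(x,0)$, setting $n_h:=[t_h/h]$ one has $u_h^+(\cdot,t_h)=u_h^+(\cdot,n_hh)$ by piecewise constancy in time, and $r-Cn_hh\ge r-Ct_h$ stays above $r/2$ for $h$ small. Since $x_h\to x$, it follows that $x_h\in B(x,r-Cn_hh)$ eventually, and therefore $u_h^+(x_h,t_h)<u_0(x)+\e$; taking $\limsup$, then the supremum over admissible sequences, and letting $\e\to 0$ gives the claim.

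The reverse inequality $u^-(x,0)\ge u_0(x)$ is obtained by the symmetric argument: choose $r>0$ with $B(x,r)\subseteq\{u_0>u_0(x)-\e/2\}$, iterate $T^-_{h,\cdot}$ starting from $B(x,r)$, and apply Lemmas \ref{comparison principle} and \ref{lemma estimates on balls} together with a telescoping choice of levels compatible with the strict/non-strict inclusions for $u_h^-$ in \eqref{inclusion level sets}; this produces $B(x,r-Cn_hh)\subseteq\{u_h^-(\cdot,t_h)\ge u_0(x)-\e\}$, whence $\liminf_h u_h^-(x_h,t_h)\ge u_0(x)-\e$ and finally $u^-(x,0)\ge u_0(x)$. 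The main point requiring care is selecting the correct comparison principle at each step of the iteration, according to whether the evolving set is bounded or has bounded complement (which depends on whether $u_0(x)\pm\e/2$ exceeds the constant value attained by $u_0$ outside a compact set); in all cases the same uniform radius-loss $Ch$ per step is guaranteed, after which the passage to the limit is routine.
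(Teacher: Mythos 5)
Your proposal is correct and follows essentially the same route as the paper: trivial derivation of $u^-\le u^+$ from \eqref{ineq subsol}, and then control of the evolution near $t=0$ via the iterated level-set inclusions \eqref{inclusion level sets} together with the ball (respectively ball-complement) evolution estimates of Lemma \ref{lemma estimates on balls} and Section \ref{sect evolution unbounded sets}, closing with a half-relaxed-limit argument. One point where you are more explicit than the paper, and rightly so: iterating \eqref{inclusion level sets} directly at a single level $\lambda$ does not close, because at each step the chain passes from a strict to a non-strict superlevel set. Your telescoping levels $\lambda_k$ spaced by $\e/(2n)$ (together with the monotonicity of $T^+_{h,\cdot}$) are exactly the device needed to chain the inclusions across $n$ steps while landing on the fixed initial set $F=\{u_0\ge u_0(x)+\e/2\}$; the paper compresses this into ``reasoning iteratively as in Lemma \ref{lemma estimate decay balls} and using \eqref{inclusion level sets}''. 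Your bookkeeping of which comparison principle to use (Lemma \ref{comparison principle, bounded-unbounded} vs.\ Lemma \ref{comparison principle, unbounded}) depending on whether $F$ is bounded or has bounded complement, as well as the observation that it suffices to establish the two nontrivial one-sided bounds $u^+(\cdot,0)\le u_0\le u^-(\cdot,0)$ and invoke $u^-\le u^+$, are both consistent with the intended argument.
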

\begin{proof} 
    The proof of the first inequality essentially follows from \eqref{ineq subsol} and the definition of $u^\pm$. To prove the equality at the initial time $t=0$, we start by remarking that $u^+(\cdot,0)\ge u_0$ as can be seen  taking sequences of the form $(x_h,0)$ in \eqref{def subsol supersol}. Then, consider   $\omega$ as  a continuous, strictly increasing modulus of continuity for $u_0$. We can also see that $\forall \varepsilon>0\ \{u_0\le u_0(x)+\varepsilon  \} \supseteq B(x,\omega^{-1}(\varepsilon))$ by uniform continuity. Thus, reasoning iteratively as in Lemma \ref{lemma estimate decay balls} and using \eqref{inclusion level sets}, we obtain that there exists $h_0(\varepsilon)$ such that $\forall h\le h_0$ it holds 
    \[ \{ u_h^+(\cdot,t) \le u_h^+(x,0)+\varepsilon \}\supseteq \left( T_{h,t-h}^+\{ u_0>u_0(x)+\varepsilon \} \right)^c=T_{h,t-h}^-\{ u_0\le u_0(x)+\varepsilon \} \supseteq  B(x,\omega^{-1}(\varepsilon/2)), \]
    as long as $t\le (\omega^{-1}(\varepsilon)-\omega^{-1}(\varepsilon/2))/(2C)=:t_\varepsilon$, and where we recalled that $u_h^\pm(\cdot,0)=u_0$. Now, fix $\sigma>0,x\in\R^N$ such that $u(x,0)>\sigma$ and a sequence $(x_{h_k},t_{h_k})\to (x,0)$ such that $\lim_k u_{h_k}^+(x_{h_k},t_{h_k})>\sigma$. Then, for $k$ large enough $(x_{h_k},t_{h_k})\in B(x,\omega^{-1}(\varepsilon/2))\times [0,t_\varepsilon)$ and so we conclude 
    \[ \sigma < \lim_k u_h^+(x_{h_k},t_{h_k}) \le u_0(x,0)+\varepsilon.  \]
    Letting $\varepsilon\to 0$ we conclude $u(\cdot,0)^+\le u_0$. The proof for $u^-$ is essentially the same. The last claim  follows from the previous one, recalling that $\omega$ is a modulus of uniform continuity for $u_0$.
\end{proof}

In order to prove a comparison principle for \eqref{Hamilton-Jacobi eq}, we will need to assume \eqref{hp 2}. Under these additional hypotheses, we are able to prove uniqueness for the parabolic Cauchy problem \eqref{Hamilton-Jacobi eq}.  The proof of this result follows from \cite[Theorem 4.2]{GigGotIshSat}: we will just show in detail that the assumption of the aforementioned theorem hold in our case, following  \cite[Proposition 6.1]{BelPao} and \cite[pag. 463]{GigGotIshSat}.

\begin{proof}[Proof of Theorem \ref{existence and regularity of viscosity solutions}]
	The proof of this result essentially follows from \cite[Theorem 4.2]{GigGotIshSat}, combined with the existence result of Theorem \ref{teo sol viscosa}. Referring to the notation of \cite{GigGotIshSat}, we firstly remark that in our case $\Omega=\R^N$, thus the parabolic boundary of $U=\Omega\times [0,T]$ is simply $\bd_p U=\R^N\times \{0\}$. Therefore, the initial conditions $(A1)-(A3)$ are all verified by Lemma \ref{lemma initial datum}. We then define the continuous Hamiltonian $F:[0,T]\times\R^N\times (\R^N\setminus\{0\})\times M^{N\times N}\to \R$ as follows
	\begin{equation}\label{Hamiltonian Giga}
		F(t,x,p,X):=\psi(x,-p)\left(  -\sum_i\bd_{x_i}\bd_{ {p_i}}\phi(x,-p)+\nabla_p^2 \phi(x,-p):X +f(x,t) \right),
	\end{equation}
	and focus on the conditions $(F1), (F3)-(F5), (F6'), (F7), (F9), (F10)$ that $F$ must satisfy. The assumptions $(F1),$ $ (F3)-(F5), (F9)$ are easily checked. $(F6')$ follows from the Lipschitz regularity of $\phi$ and $\psi$, as $\forall t\in[0,T],x\in\R^N,|p|\ge\rho,|q|+|X|\le R$ one has
	\begin{align*}
		&|F(t,x,p,X)-F(t,x,q,X)|\le c_\psi |p-q| \left\lvert -\sum_i\bd_{x_i}\bd_{ {p_i}}\phi(x,-p)+\nabla^2_p\phi(x,-p):X  \right\rvert \\
		&+\psi(x,-q)\left\lvert-\sum_i\left( \bd_{x_i}\bd_{ {p_i}}\phi(x,-p)-\bd_{x_i}\bd_{ {p_i}} \phi(x,-q)\right)+\left( \nabla^2_p\phi(x,-p)-\nabla^2_p\phi(x,-q)\right):X\right\rvert\\
		&\le c_R|p-q|\left( 1+\dfrac 1{|p|} \right)+c_R |p-q|\le c_{R,\rho}|p-q|.	\end{align*}
	
	For $(F7)$, we remark that the first term in the parenthesis in \eqref{Hamiltonian Giga} is $0-$homogeneous in $p$, while the second one is $(-1)-$homogeneous in $p$ but $1-$homogeneous in $X$. Lastly, we sketch how to prove $(F10)$. Since it concerns the  $X$-terms, we focus simply on 
	\[ \nabla ^2_p \phi(x,-p): X=\text{tr} \left( \nabla^2_p\phi(x-,p)\,X^T\right). \]
    Multiplying by $\phi(x,-p) $, we rewrite $\phi(x,-p)\text{tr}\left( \nabla^2_p\phi(x-,p)\,X^T\right)=\text{tr}(A(x,-p)X^T),$ where $A= B - \left( \nabla_p \phi \otimes \nabla_p \phi\right),$ with $B$ being the uniformly elliptic operator $\frac 12\nabla^2_p \phi^2$. We can then factorize	$B=\tilde L\tilde L^T$, with $\tilde L$ being a nondegenerate, lower triangular matrix. Then, following the proof of \cite[Proposition 6.1]{BelPao} and \cite[pg. 463]{GigGotIshSat}, we obtain $(F10)$.
\end{proof}

Once uniqueness is settled, one can finally define the notion of \textit{level set} solution to the mean curvature flow as follows.

\begin{defin}
	Let $E_0$ be a compact initial set. Define a uniformly continuous, bounded function $u_0:\R^N\to\R$ such that $\{ u_0\ge 0 \}=E_0$. Then, let $u: \R^N\times [0,+\infty)\to\R$ be the unique continuous viscosity solution to \eqref{Hamilton-Jacobi eq} given by Theorem \ref{existence and regularity of viscosity solutions}. Then, the family  $E_t:=\{ u^+(\cdot,t)\ge 0 \}_{t\ge 0}$ 	will be called the level set solution to the mean curvature flow.
\end{defin}
This definition is well posed since the Hamiltonian defined in \eqref{def hamiltonian} satisfies the so-called \textit{geometricity condition}. Namely, one can easily check that for any $\lambda\neq 0, p\in\R^N\setminus0,q\in\R^N$ and any symmetric $N\times N$ matrix $X$ one has
\[ H(x,\lambda p,\lambda X+p\otimes q+ q\otimes p)=\dfrac \lambda{|\lambda|}H(x,p,X). \]
Thus, one can prove by classical arguments (see e.g. \cite[Remark 3.9]{ChaMorPon15}) the following result.
\begin{lemma}
    Let $u_0,\tilde u_0$ two initial data for \eqref{Hamilton-Jacobi eq} such that $\{ u_0\ge 0\}=\{ \tilde u_0\ge 0\}$. Then, denoting by $u,\tilde u$ the corresponding solutions to \eqref{Hamilton-Jacobi eq}, one has 
    \[ \{ u(\cdot,t)\ge 0\}=\{ \tilde u(\cdot,t)\ge 0\}\quad \text{ for all }t\in[0,T], \]
    and the same identity holds for the open superlevel sets.
\end{lemma}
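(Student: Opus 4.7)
The plan is to rely on the geometric invariance of the level-set equation \eqref{Hamilton-Jacobi eq}: for every continuous non-decreasing function $\theta:\R\to\R$, if $u$ is a viscosity solution of \eqref{Hamilton-Jacobi eq} with initial datum $u_0$, then $\theta\circ u$ is a viscosity solution with initial datum $\theta\circ u_0$. This is a standard consequence of the geometricity of $H$ recalled just after the definition of level set solution: for smooth strictly increasing $\theta$ it is checked by a direct computation (chain rule combined with the $0$-homogeneity in $p$ and $1$-homogeneity in $X$ of the Hamiltonian, and the $1$-homogeneity of $\psi$ in the second variable), and the general case follows by approximation together with the stability of viscosity solutions under uniform limits.

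Granting this invariance, the argument proceeds in two steps. First, I would construct continuous non-decreasing functions $\theta_1,\theta_2:\R\to\R$ with $\theta_i(0)=0$ and
\[ \theta_1\circ u_0 \le \tilde u_0 \le \theta_2\circ u_0\qquad \text{on }\R^N. \]
The natural candidates are
\[ \theta_1(s):=\inf\{\tilde u_0(x)\,:\, u_0(x)\ge s\},\qquad \theta_2(s):=\sup\{\tilde u_0(x)\,:\, u_0(x)\le s\}, \]
suitably truncated to remain real-valued. Monotonicity and the pointwise inequalities are immediate from the definitions, while uniform continuity of $u_0$ and $\tilde u_0$ yields the continuity of $\theta_i$. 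Since $E_0=\{u_0\ge 0\}$ is a non-trivial compact set its boundary is non-empty; combined with the hypothesis $\{u_0\ge 0\}=\{\tilde u_0\ge 0\}$ and the continuity of both data this forces $\tilde u_0=0$ on $\partial E_0$, whence $\theta_1(0)=\theta_2(0)=0$. The same continuity argument gives $\theta_2(s)<0$ for every $s<0$.

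Second, by the invariance, $\theta_1\circ u$ and $\theta_2\circ u$ are viscosity solutions of \eqref{Hamilton-Jacobi eq} whose initial data lie respectively below and above $\tilde u_0$. The comparison principle inside Theorem \ref{existence and regularity of viscosity solutions} then yields
\[ \theta_1(u(\cdot,t))\le \tilde u(\cdot,t)\le \theta_2(u(\cdot,t))\qquad \forall t\in[0,T]. \]
The implications $u(\cdot,t)\ge 0 \Rightarrow \theta_1(u(\cdot,t))\ge 0\Rightarrow \tilde u(\cdot,t)\ge 0$ and $\tilde u(\cdot,t)\ge 0\Rightarrow \theta_2(u(\cdot,t))\ge 0\Rightarrow u(\cdot,t)\ge 0$ (the last one uses $\theta_2<0$ on $(-\infty,0)$) give the equality $\{u(\cdot,t)\ge 0\}=\{\tilde u(\cdot,t)\ge 0\}$. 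The identity for open superlevel sets is obtained by the same strategy under the symmetric hypothesis $\{u_0>0\}=\{\tilde u_0>0\}$, for which the analogous construction can be tuned to produce $\theta_1$ strictly positive on $(0,\infty)$ and $\theta_2$ strictly negative on $(-\infty,0)$.

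The main obstacle, and the step requiring the most care, is the rigorous proof that $\theta\circ u$ remains a viscosity solution when $\theta$ is only continuous and non-decreasing (rather than smooth and strictly increasing); this rests on the geometricity relation for $H$ and a stability argument for viscosity solutions under uniform approximation by smooth strictly increasing compositions.
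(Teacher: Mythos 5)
The paper offers no proof for this statement, referring only to ``classical arguments'' and to \cite[Remark 3.9]{ChaMorPon15}; your proposal reconstructs the standard argument of Chen--Giga--Goto and Evans--Spruck that such references point to, namely geometric invariance of the level-set PDE under composition with a continuous non-decreasing reparametrization $\theta$, followed by sandwiching $\tilde u_0$ between $\theta_1\circ u_0$ and $\theta_2\circ u_0$ and applying the comparison principle from Theorem \ref{existence and regularity of viscosity solutions}. Your identification of the invariance step as the one needing the most care, and the mechanism (homogeneity of $\psi$ together with the geometricity relation for $H$, plus stability under uniform approximation by smooth strictly increasing maps) is exactly right.

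There is, however, one concrete gap. The claim that ``uniform continuity of $u_0$ and $\tilde u_0$ yields the continuity of $\theta_i$'' is false. The candidate $\theta_1(s)=\inf\{\tilde u_0(x):u_0(x)\ge s\}$ is non-decreasing and, by a compactness argument using the constancy of the data outside a compact set, left-continuous; but it jumps whenever $\{u_0\le s\}$ has a ``plateau''. On $\R$, take $u_0=\max(-|x|,-1)$ and $\tilde u_0=\max(-2|x|,-3)$: both are uniformly continuous, constant outside a compact set, and satisfy $\{u_0\ge 0\}=\{\tilde u_0\ge 0\}=\{0\}$, yet $\theta_1(-1)=-3$ while $\lim_{s\to -1^+}\theta_1(s)=-2$. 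Since your invariance argument approximates $\theta$ uniformly by smooth strictly increasing maps, continuity of $\theta$ is indispensable, and the raw candidates cannot be used directly. The missing ingredient --- which is in fact the main technical content of the classical proofs --- is a separate elementary lemma: any left-continuous non-decreasing $g$ with $g(0)=0$ and $g\ge 0$ on $[0,+\infty)$ admits a continuous non-decreasing minorant $\theta_1\le g$ with $\theta_1(0)=0$, together with the mirrored statement producing a continuous non-decreasing majorant $\theta_2\ge\sup\{\tilde u_0(x):u_0(x)\le s\}$ with $\theta_2(0)=0$ and $\theta_2<0$ on $(-\infty,0)$. Once this lemma is supplied, the rest of your argument (the observation $\tilde u_0\equiv 0$ on $\bd E_0$ forcing the normalizations, the comparison step, and the chain of level-set implications) goes through unchanged.
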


\subsection{Proof of Theorem \ref{teo sol viscosa}}

In this section we will prove that the limiting functions $u^\pm$ are respectively a  viscosity sub {-} and supersolutions to \eqref{Hamilton-Jacobi eq}. We remark that we work assuming \eqref{standing hp}, \eqref{hp lip psi 1} and that $f\in C^0(\R^N\times [0,+\infty))$. We will be following the structure of the proof of \cite[Theorem 6.16]{ChaMorPon15}, but taking into account the weaker definition of  {$u^+$} holding in our case. We will be using the $O,o$ notations with respect to $h\to 0$ and focus on proving that  {$u^+$} is a subsolution. The proof for $u^-$ is analogous.

\begin{proof}[Proof of Theorem \ref{teo sol viscosa}]
Consider $u^+$ as defined in \eqref{def sottosoluzione}: we need to prove that it is a subsolution. In the following, we will denote $u:=u^+$ and $u_h:=u_h^+$. Let $\eta (x,t)$ be an admissible test function in $\bar z:=(\bar x, \bar t) {\in \R^N\times (0,T)}$ and assume that $(\bar x, \bar t)$ is a strict maximum point for $u-\eta $. Assume furthermore that $u-\eta =0$ in such  point. We need to show that either \eqref{eq viscosa degen} or \eqref{eq viscosa} holds at $\bar z$.\\
 {\textbf{Case 1.}} Let us first assume that $\nabla\eta  (\bar z)\neq 0$. By classical arguments, we can assume that $\bar z$ is a strict maximum point and that $\eta $ is smooth. By the definition of $u$,  there exists a sequence $\tilde z_k:=(\tilde x_{h_k},\tilde t_{h_k})\to \bar z$ such that $\lim_k u_{h_k}(\tilde z_k)= u(\bar z).$  {We remark that we can substitute the functions $u_{h_k}$ for $t>0$ with their \textit{usc} envelope in time, without changing the value of $u$. Indeed, the \textit{usc} envelope of $u_{h_k}$ is the function  at all discrete times $lh_k$ is given by
$$\max \{u_{h_k}(\cdot,(l-1)h_k), u_{h_k}(\cdot,lh_k) \}$$
and coincides with $u_{h_k}$ elsewhere. Since now $u_{h_k}$ is \textit{usc} in time and space,} by standard arguments (compare e.g. \cite[Lemma 6.1]{Bar-rev}), there exists a radius $\rho>0$ such that all functions $u_{h_k}-\eta$ achieve a local maximum in $B_\rho(\bar z)$ at points $z_{k}=(x_k,t_k)$.  Then, passing to a further subsequence we can ensure that $z_{k}\to w\in B_\rho(\bar z)$, and we use the definition of $u$ to obtain 
\[ (u-\eta)(w)\ge \limsup_k (u_{h_k}-\eta)(z_{k})\ge \limsup_k (u_{h_k}-\eta)(\tilde z_{k}) = (u-\eta)(\bar z).\]
Therefore, $w=\bar z$ by maximality. Thus we can assume that each  function $u_{h_k}-\eta$ achieves a local maximum in $B_\rho(\bar z)$ at a point $z_{h_k}=:(x_k,t_k)$ and that $u_{h_k}(z_{h_k})\to u(\bar z)$  as $k\to \infty.$ Finally, we can assume also that $\nabla \eta (x_k,t_k)\neq 0$ for  $k$ large enough. \\
 {\textbf{Step 1.} We start defining an appropriate set which is then used as a competitor for the minimality of the level sets of the functions $u_h$. From the previous computations}, one has  in particular that
\begin{equation}\label{eq 6.19 Nonlocal}
    u_h(x,t)\le \eta (x,t)+c_k
\end{equation}
where $c_k:= u_{h_k}( x_k, t_k)- \eta ( x_k, t_k) $, with equality if $(x,t)=( x_k,  t_k)$. Let $\sigma>0$ and set 
\begin{equation*}
    \eta _{h_k}^\sigma(x):= \eta (x, t_k)+c_k+\dfrac{\sigma}2 |x-x_k|^2.
\end{equation*}
Then, for all $x\in\R^N$, 
\[ u_{h_k}(x,t_k)\le \eta ^\sigma_{h_k}(x) \] 
with equality {if and only if} $x=x_k$. We set $l_k=u_{h_k}(x_k,t_k)=\eta ^\sigma_{h_k}(x_k)$. We fix $\varepsilon>0$, to be chosen later, and write $E_{\varepsilon,k}:=\left\lbrace u_{h_k}(\cdot,t_k-h_k)\ge l_k-\varepsilon  \right\rbrace$. We define\footnote{ We need to define the sets $W_\varepsilon$ in this way (compare the different definition in \cite{ChaMorPon15}) since firstly, we can not rule out that the inclusions in \eqref{inclusions W_e} are strict, and secondly it is not clear if otherwise $|W_\varepsilon|>0$.}
\begin{equation}
    W_\varepsilon:=\left(  T_{h,t_k-h_k}^+ E_{\varepsilon,k} \right)\setminus\left\lbrace \eta ^\sigma_{h_k}(\cdot)> l_k+\varepsilon  \right\rbrace  .
\end{equation}
We immediately see that $W_\varepsilon\to \{x_k\}$ in the Kuratowski sense as $\varepsilon\to 0$ since by \eqref{inclusion level sets}
\begin{equation}\label{inclusions W_e} 
    \left\lbrace u_{h_k} (\cdot,t_k)>l_k-\varepsilon  \right\rbrace \setminus\left\lbrace \eta ^\sigma_{h_k}(\cdot)> l_k +\varepsilon  \right\rbrace 
 \subseteq W_\varepsilon  \subseteq \left\lbrace u_{h_k} (\cdot,t_k)\ge l_k-\varepsilon  \right\rbrace \setminus\left\lbrace \eta ^\sigma_{h_k}(\cdot)> l_k +\varepsilon \right\rbrace,
\end{equation}    
 see also \eqref{eq 6.24 Nonlocal} below. Then, we check that $|W_\varepsilon|>0$ for all $\varepsilon$ small enough. By the continuity of $\eta^\sigma$ and $|\nabla \eta(\bar z)|\neq 0,$ for any $\varepsilon$ there exist a radius $r_\varepsilon$ such that $W_\varepsilon\supseteq B(x_k,r_\varepsilon)\cap T_{h,t_k-h_k}^+ E_{\varepsilon,k}$. Furthermore, for any $\varepsilon>0$, using  \eqref{inclusion level sets} again yields  $x_k\in T^+_{h_k,t_k-h_k}\{ u_{h_k}(\cdot,t_k-h_k)\ge l_k-\varepsilon \},$ and the latter set coincides with the closure of its points of density 1 by Lemma \ref{lemma density estimate}. Thus, $x_k$ satisfies lower density estimates and so we conclude that $|W_\varepsilon| > 0$. Now, assume $E_{\varepsilon,k}$ is bounded. By minimality we have 
\begin{align}
    &P_\phi(T_{h,t_k-h_k}^+E_{\varepsilon,k}) +\dfrac 1{h_k}\int_{T_{h,t_k-h_k}^+E_{\varepsilon,k}} \sd^\psi_{E_{\varepsilon,k} }(x)\ud x +\int_{W_\varepsilon} F_{h_k}(x,t_k-h_k)\ud x\nonumber\\
    &\le P_\phi\left(\left(T_{h,t_k-h_k}^+E_{\varepsilon,k}\right) \cap \{ \eta ^\sigma_{h_k}> l_k+\varepsilon \}\right) + \dfrac 1{h_k}\int_{\left(T_{h,t_k-h_k}^+E_{\varepsilon,k}\right) \cap \{ \eta ^\sigma_{h_k}> l_k \} } \sd^\psi_{E_{\varepsilon,k}}. \label{eq 6.21 Nonlocal}
\end{align}
Adding to both sides the term $P_\phi\left(\{ \eta ^\sigma_{h_k}> l_k+\varepsilon \}\cup T_{h,t_k-h_k}^+E_{\varepsilon,k} \right)$ and using the submodularity \eqref{submod}, we obtain 
\begin{align*}
    P_\phi(\{ \eta ^\sigma_{h_k}> l_k+\varepsilon \}\cup W_\varepsilon) &-  P_\phi(\{ \eta ^\sigma_{h_k}> l_k+\varepsilon \})	+\dfrac 1{h_k}\int_{ W_\varepsilon} \sd^\psi_{E_{\varepsilon,k} }(x)\ud x\\
    &+\int_{W_\varepsilon} F_{h_k}(x,t_k-h_k)\ud x\le 0.
\end{align*}
By \eqref{eq 6.19 Nonlocal},  $ \{u_{h_k}(\cdot,t_k-h_k)\ge  l_k-\varepsilon \}  \subseteq  \{ \eta (\cdot,t_k-h_k)\ge l_k-c_k-\varepsilon \} $, therefore it holds
\begin{align}
    P_\phi(\{ \eta ^\sigma_{h_k}> l_k+\varepsilon \}\cup W_\varepsilon) &- P_\phi(\{ \eta ^\sigma_{h_k}> l_k+\varepsilon \})	+\dfrac 1{h_k}\int_{ W_\varepsilon} \sd^\psi_{\{\eta (\cdot,t_k-h_k)\ge l_k-c_k-\varepsilon \} }(x)\ud x\nonumber\\
    &+\int_{W_\varepsilon} F_{h_k}(x,t_k-h_k)\ud x\le 0.\label{eq 6.22 Nonlocal}
\end{align}
If instead $E_{\varepsilon,k}$ is an unbounded set with compact boundary, we replace inequality \eqref{eq 6.21 Nonlocal}  by
\begin{align*}
    &P_\phi(T_{h,t_k-h_k} E_{\varepsilon,k}) +\dfrac 1{h_k}\int_{\left(T_{h,t_k-h_k}^+ E_{\varepsilon,k}\right)\cap B_R} \sd^\psi_{E_{\varepsilon,k}}(x)\ud x +\int_{W_\varepsilon} F_{h_k}(x,t_k-h_k)\ud x\\
    &\le P_\phi(\left( T_{h,t_k-h_k}^+ E_{\varepsilon,k}\right) \cap \{ \eta ^\sigma_{h_k}> l_k+\varepsilon \}) + \dfrac 1{h_k}\int_{ \left( T_{h,t_k-h_k}^+ E_{\varepsilon,k}\right) \cap \{ \eta ^\sigma_{h_k}> l_k+\varepsilon \}\cap B_R } \sd^\psi_{E_{\varepsilon,k} },
\end{align*}
for $R>0$ sufficiently large, see \eqref{eq 6.10 Nonlocal}. Then, one can argue as before to obtain \eqref{eq 6.22 Nonlocal}.\\
 {\textbf{Step 2.} We estimate the first two terms in  \eqref{eq 6.22 Nonlocal}.} The quantity $P_\phi(\{ \eta ^\sigma_{h_k}> l_k+\varepsilon \}\cup W_\varepsilon) - P_\phi(\{ \eta ^\sigma_{h_k}> l_k+\varepsilon \})$ can be estimated as done  in Lemma \ref{lemma estimates on balls}. Indeed, we consider the vector field $v=\nabla_p\phi(x,\nabla\eta ^\sigma_{h_k})$ in \eqref{P via calibration} and we use the divergence theorem to get 
\begin{equation}
    \begin{split}
       P_\phi(\{ \eta ^\sigma_{h_k}> l_k+\varepsilon \}\cup W_\varepsilon) - P_\phi(\{ \eta ^\sigma_{h_k}\ge l_k+\e \} ) &\ge \int_{\bd(\{ \eta ^\sigma_{h_k}> l_k+\varepsilon \}\cup W_\varepsilon)}v\cdot \nu-\int_{\bd\{ \eta ^\sigma_{h_k}> l_k+\e \}}  v\cdot\nu\\
    &=|W_\varepsilon|\fint_{W_\varepsilon} \div\, v 
    ,\label{eq 6.30 Nonlocal}   
    \end{split}
\end{equation}
where $\nu$ denotes the unit outer vector to the set we are integrating on. We then remark that $\fint_{W_\varepsilon} \div\, v\to H^\phi_{\{ \eta ^\sigma_{h_k}> l_k \}}(x_k)$ and $\fint_{W_\varepsilon} F_{h_k}(x,t_k-h_k)\ud x\to F_{h_k}(x_k,t_k-h_k)$ as $\varepsilon\to 0$ by continuity. \\ 
 {\textbf{Step 3.} We bound the distance term in \eqref{eq 6.22 Nonlocal} by showing that
\begin{equation}\label{eq 6.29 Nonlocal}
    \dfrac 1{h_k}\sd^\psi_{\{ \eta (\cdot,t_k-h_k)=l_k-c_k-\varepsilon \}}(z) \ge  \dfrac{\bd_t \eta (z,t_k)- O(h_k)}{ \psi(y,-\nabla\eta (y,t_k-h_k)) +O(h_k)}.
\end{equation}}
For any $z\in W_\varepsilon$, we have
\begin{equation}
    \eta (z,t_k)+c_k+\frac \sigma 2 |z-x_k|^2\le l_k +\varepsilon \label{eq 6.23 Nonlocal}.
\end{equation}
Since, in turn, $\eta (z,t_k)+c_k> l_k-\varepsilon$ it follows that  $ \sigma  |z-x_k|^2<4\varepsilon$ and thus, for $\varepsilon$ small enough,
\begin{equation}\label{eq 6.24 Nonlocal}
    W_\varepsilon\subseteq B_{c\sqrt \varepsilon}(x_k). 
\end{equation}
By a Taylor expansion, for every $z\in W_\varepsilon$ we have
\begin{equation}\label{eq 6.25 Nonlocal}
    \eta (z,t_k-h_k)=\eta (z,t_k)-h_k\bd_t \eta (z,t_k)+h^2_k\int_0^1 (1-s)\bd^2_{tt}\eta (z,t_k-s h_k)\ud s.
\end{equation}
Then, we consider $y,y_e\in\{ \eta (\cdot,t_k-h_k)(y)=l_k-c_k-\varepsilon \}$ being respectively, a point of minimal $\psi-$distance and Euclidean distance from $z$.\\
 {\textit{Claim:} We claim that it holds}  
\begin{equation}\label{decay y}
    |z-y|=O(h_k).
\end{equation}
In order to prove this result, we start remarking that for  $k\to\infty$ and choosing  $\varepsilon\ll h_k$, one has  $\sd^\psi_{\{ \eta (\cdot,t_k-h_k)\ge l_k-c_k-\varepsilon \}   }(z)\to 0$ (as $z\to x_k$ for $\varepsilon\to 0$ and $x_k\in \{\eta (\cdot,t_k)\ge l_k-c_k  \}$). In particular, recalling the bounds \eqref{inclusione psi palle} one has 
\[  |z-y_e|\le  c_\psi^2 |z-y|\le c_\psi^3 |\sd^\psi_{\{ \eta (\cdot,t_k-h_k)\ge l_k-c_k-\varepsilon \}   }(z)|\to 0  \]
as $k\to \infty.$  By \eqref{eq 6.23 Nonlocal} we deduce in particular $\eta (z,t_k)+c_k<l_k+ \varepsilon$, that is, 
\begin{equation}\label{eq 6.28 Nonlocal}
    0\le \eta (z,t_k)-\eta (y,t_k-h_k)\le 2\e,
\end{equation}
and the same inequality substituting $y_e$ to $y$. Thus, one has 
\begin{align*}
    \eta (z,t_k)-\eta (y_e,t_k-h_k)&=\nabla\eta (y,t_k-h_k)\cdot (z-y_e)-h_k\bd_t\eta (y,t_k-h_k)+O(|z-y_e|^2+h_k^2)
\end{align*}
which we combine with $\nabla\eta (y,t_k-h_k)\cdot (z-y_e)=\pm|\nabla\eta (y,t_k-h_k)|\, |z-y_e| $ (see \cite{ChaMorPon15} for details) and \eqref{eq 6.28 Nonlocal} to get
\[ |z-y_e|\,|\nabla\eta (y,t_k-h_k)|\le 2\e +O(h_k)+O(|z-y_e|^2).\]
Recalling that $|\nabla\eta (y,t_k-h_k)|\ge c>0$ for $h_k$ small enough, we divide by $|\nabla\eta (y,t_k-h_k)|$ to conclude $|z-y_e|=O(h_k)$ as $\e\ll h_k.$ Finally, employing again \eqref{inclusione psi palle}, we  {prove the claimed} \eqref{decay y}.

Then, we consider a  geodesic curve for the definition of $\sd^\psi_{\{ \eta (\cdot,t_k-h_k)\ge l_k-c_k-\varepsilon \}   }(z)$: if this distance is positive, we choose  $\gamma:[0,1]\to \R^N$ with $\gamma(0)=z, \gamma(1)=y$, with $y$ as before, otherwise we take $\gamma$ such that $\gamma(0)=y, \gamma(1)=z$. In the following, we will assume $\sd^\psi_{\{ \eta (\cdot,t_k-h_k)\ge l_k-c_k-\varepsilon \}   }(z)>0$, the other case being analogous. Recalling \eqref{CS type ineq}, we have
\begin{align*}
    \eta (z,t_k-h_k)&=\eta (y,t_k-h_k)+\int_0^1 \nabla \eta (\gamma,t_k-h_k)\cdot \dot\gamma\ud t\\
    &\ge \eta (y,t_k-h_k)-\int_0^1 \psi(\gamma,-\nabla\eta  (\gamma,t_k-h_k ))\psi^\circ(\gamma,\dot\gamma)\ud t\\
    &\ge \eta (y,t_k-h_k)-\psi(y,-\nabla\eta  (y,t_k-h_k))\,\sd^\psi_{\{ \eta (\cdot,t_k-h_k)=l_k-c_k-\varepsilon \}   }(z)\\
    &\quad -\int_0^1 \left( \psi(\gamma,-\nabla\eta  (\gamma,t_k-h_k))-\psi(y,-\nabla\eta  (y,t_k-h_k))  \right)\psi^\circ(\gamma,\dot\gamma)\ud t \\
    &\ge \eta (y,t_k-h_k)-\big(\psi(y,-\nabla\eta  (y,t_k-h_k))  + c|z-y| \big)\,\sd^\psi_{\{ \eta (\cdot,t_k-h_k)=l_k-c_k-\varepsilon \}   }(z),
\end{align*}
where in the last line we reasoned as in \eqref{estimate curve} to obtain the bound $\sup_t|\gamma(t)-y|\le c|z-y|$. 
Recalling \eqref{decay y} one has 
\begin{equation}\label{eq 6.26 Nonlocal} 
    \eta (z,t_k-h_k)\ge \eta (y,t_k-h_k)-\psi(y,-\nabla\eta (y,t_k-h_k))\,\sd^\psi_{\{ \eta (\cdot,t_k-h_k)=l_k-c_k-\varepsilon \}   }(z)+o(h_k).
\end{equation}
Combining  \eqref{eq 6.25 Nonlocal} with \eqref{eq 6.26 Nonlocal} and using \eqref{eq 6.28 Nonlocal}, we deduce 
\begin{align*}
    \sd^\psi&_{\{ \eta (\cdot,t_k-h_k)=l_k-c_k-\varepsilon \}}(z)\,  \psi(y,-\nabla\eta (y,t_k-h_k))+o(h_k)\\
    & \ge -2\varepsilon +h_k\bd_t \eta (z,t_k)-h_k^2 \int_0^1(1-s)\bd^2_{tt}\eta (z,t_k-sh_k)\ud s.
\end{align*}
Note that, in view of \eqref{eq 6.23 Nonlocal} and \eqref{inclusione psi palle}, $|\eta (z,t_k)-\eta (y,t_k)|\le c\varepsilon+ch_k=O(h_k)$, provided $\varepsilon \ll h_k $ and small enough.  {We then conlude \eqref{eq 6.29 Nonlocal} by combining the previous inequality with} \eqref{eq 6.24 Nonlocal},\eqref{decay y} as
\begin{align*}
    \dfrac 1{h_k}  \sd^\psi_{\{ \eta (\cdot,t_k-h_k)=l_k-c_k-\varepsilon \}}(z) &\ge \dfrac {\bd_t \eta (z,t_k)-\frac{2\varepsilon}{h_k}-O(h_k)-O_{h_k}(1)   }{\psi(y,-\nabla\eta (y,t_k-h_k))}   \nonumber\\
    & = \dfrac {\bd_t \eta (x_k,t_k)+O(\sqrt\varepsilon)-\frac{2\varepsilon}{h_k}-O(h_k)-O_{h_k}(1)   }{\psi(x_k,-\nabla\eta (x_k,t_k-h_k))+O(\sqrt\varepsilon)+O(h_k)}.
\end{align*}
\noindent  {\textbf{Step 4.}} We conclude  {the proof} by employing \eqref{eq 6.22 Nonlocal}, \eqref{eq 6.30 Nonlocal}  and \eqref{eq 6.29 Nonlocal}, dividing by $|W_\varepsilon|$ and sending $\varepsilon\to 0$ to obtain
\[ \dfrac{\bd_t \eta (x_k,t_k)-O_{h_k}(1)}{\psi(x_k,-\nabla\eta (x_k,t_k))+O(h_k)}+H^\phi_{\{ \eta ^\sigma_{h_k}\ge \eta ^\sigma_{h_k}(x_k) \}}  (x_k)-F_{h_k}(x_k,t_k-h_k)\le 0. \]
Letting simultaneously $\sigma\to 0$ and $k\to \infty$, recalling the continuity properties of $H^\phi$, we deduce \eqref{eq viscosa}. Indeed the sets $\{ \eta ^\sigma_{h_k}> \eta ^\sigma_{h_k}(x_k) \}$ are converging in $C^2$ to the set $\{ \eta > \eta  (x) \}$, $x_k\to x$ and thus 
\[ H^\phi_{\{ \eta ^\sigma_{h_k}> \eta ^\sigma_{h_k}(x_k) \}}  (x_k)\to H^\phi_{\{ \eta > \eta (x) \}}  (x),\]
and we conclude the proof of this step.\\
 {\textbf{Case 2.}}  Now we consider the case $\nabla\eta (\bar x, \bar t)=0$ and we show that $\bd_t \eta (\bar x, \bar t)\le 0$. The proof follows the line of the one in \cite{ChaMorPon15}, we just highlight the differences. 

Since $\nabla \eta (\bar z)=0,$ there exist  { $a,b>0$} such that 
\[|\eta (x,t)-\eta (\bar z)-\bd_t\eta (\bar z)(t-\bar t)|\le   {a|x-\bar x|^3+b|t-\bar t|^2,} \]
thus, we can define
\begin{align*}
    &\tilde\eta (x,t)=\bd_t\eta (\bar z)(t-\bar t) +2 {a|x-\bar x|^3}+2 {b|t-\bar t|^2}\\
    &\tilde\eta _k(x,t)=\tilde\eta (x,t)+\dfrac 1{k(\bar t - t)}.
\end{align*}
We remark that $u-\tilde\eta $ achieves a strict maximum in $\bar z$ and  the local maxima of $u-\tilde\eta _k$ in $\R^N \times [0,\bar t]$ are in points $(x_k,t_k)\to\bar z$ as $k\to \infty$, with $t_n\le \bar t$. From now on, the only difference from \cite{ChaMorPon15} is in the case $x_k= \bar x$ for an (unrelabeled) subsequence. We assume $x_k=\bar x$ $\forall k>0$ and define $b_k=\bar t-t_k>0$ and the radii
 {\begin{equation*}
    r_k:={2\sqrt{Cb_k}},
\end{equation*}
where $C$ is the constant of Lemma \ref{lemma estimate decay balls}. Taking $k$ large enough, by Lemma \ref{lemma estimate decay balls} the balls $B(\cdot,r_k)$ have an extinction time greater than $ 2(\bar t-t_k)$.  We then have
\begin{align*}
    B(\bar x,r_k)&\subseteq  \{ \tilde\eta _k(\cdot,t_k)\le   \tilde\eta _k(\bar x,t_k)+2 {a r_k^3} \}\\&\subseteq  \{ u(\cdot,t_k)\le   u(\bar x,t_k)+2{a r_k^3}  \},
\end{align*}
by maximality of $u-\tilde\eta _k$  at $z_k$. Since the balls $B(\cdot,r_k)$ are not vanishing, we conclude
\[ \bar x\in \{ u(\cdot,\bar t)\le   u(\bar x,t_k)+2{a r_k^3}  \}. \]
Finally, we use again the maximality of $u-\eta $ at $\bar z$ and the choice of $r_k$ to obtain
\[  \dfrac{\eta (\bar x,t_k)-\eta (\bar z)}{t_k-\bar t} = \dfrac{\eta (\bar x,t_k)-\eta (\bar z)}{-b_k}\le  \dfrac{u(\bar x,t_k)-u(\bar x,\bar t)}{-b_k}\le \dfrac{-2{a r_k^3}}{-b_k}=c\, \sqrt{b_k}. \]}
Passing to the limit $k\to\infty$, we conclude that $\bd_t \eta (\bar z)\le 0$.
\end{proof}

We conclude with two remarks concerning some possible generalizations of the results presented.
\begin{oss}
   The  results presented in this work can be immediately extended to unbounded initial open sets $E_0$, whose boundary is compact. Indeed, defining the discrete flow as $E_t\h=E_0$ if $t\in [0,h)$, otherwise by induction $E_{t}\h=T^-_{h,t}E_{t-h}\h,$ where the operator $T_{h,n}^-$ is the one defined in \eqref{pb discr compl}, this evolution is uniquely characterized by the one of the complement. Thus, all the results presented in this paper can be extended to this particular unbounded case.
\end{oss}

\begin{oss}\label{rmk rel geom}
    Following the lines of \cite{BelPao} (in the spirit of \cite{AlmTayWan}) one can see that the results of this paper may be extended to prove existence of flat flows and level set solutions to the mean curvature flow    on $\R^N$ endowed with the geometric structure induced by a Finsler metric $\phi^\circ.$  For example, the perimeter functional in this setting is defined as follows. Given a set $E$ of finite perimeter, its (intrinsic) perimeter is 
    \[\mathcal P_{\phi^\circ} (E)=\int_{\bd^* E} \phi(x,\nu_E(x)) \udH_{\phi^\circ}(x),\]
    where the Hausdorff measure $\mathcal H^{N-1}_{\phi^\circ}$ is the one induced by the metric ${\phi^\circ}.$ In particular, one can compute  $\udH_{\phi^\circ}(x)=\omega_N |B^{\phi^\circ}(x)|^{-1}\udH(x)$  (see \cite{BelPao}), thus this approach is equivalent to consider in our framework a slightly different (but still regular) anisotropy, namely $\phi^*(x,\nu):=\omega_N|B^{\phi^\circ}(x)|^{-1}\phi(x,\nu)$.  In particular, this approach leads to considering the evolution of hypersurfaces $E_t$ moving according to the evolution law 
    \[  V_{\phi^\circ}(x,t)= -\mathcal H_{E_t}(x)+f(x,t)\quad x\in\bd E_t,\ t\in (0,T) \]
    where now $V_{\phi^\circ}$ represents the speed of evolution along the anisotropic normal outer vector $n_{\phi^\circ}(x)=\nabla_p \phi(x,\nu_E(x))$ and  $ \mathcal H$ is the \virg{intrinsic} mean curvature, thus the first variation of the perimeter $\mathcal P_{\phi^\circ}$. Recalling that $n_{\phi^\circ} (x) \cdot \nu_E(x)=\phi(x,\nu_E(x))$,  we see that the hypersurfaces are evolving with a normal (in the Euclidean sense) velocity given by the law $$V(x,t)=\phi(x,\nu_{E_t}(x)) \left( -H^{\phi^*}_{E_t}(x)+f(x,t)\right).$$
    After this transformation, we can apply the results previously proved.
\end{oss}
\subsection*{Acknowledgements}
 {The authors want to thank the referees for the careful reading of the manuscript and their comments, which helped improve the paper.} D.~De Gennaro has received funding from the European Union's Horizon 2020 research and innovation programme under the Marie Skłodowska-Curie grant agreement No 94532 \includegraphics[scale=0.01]{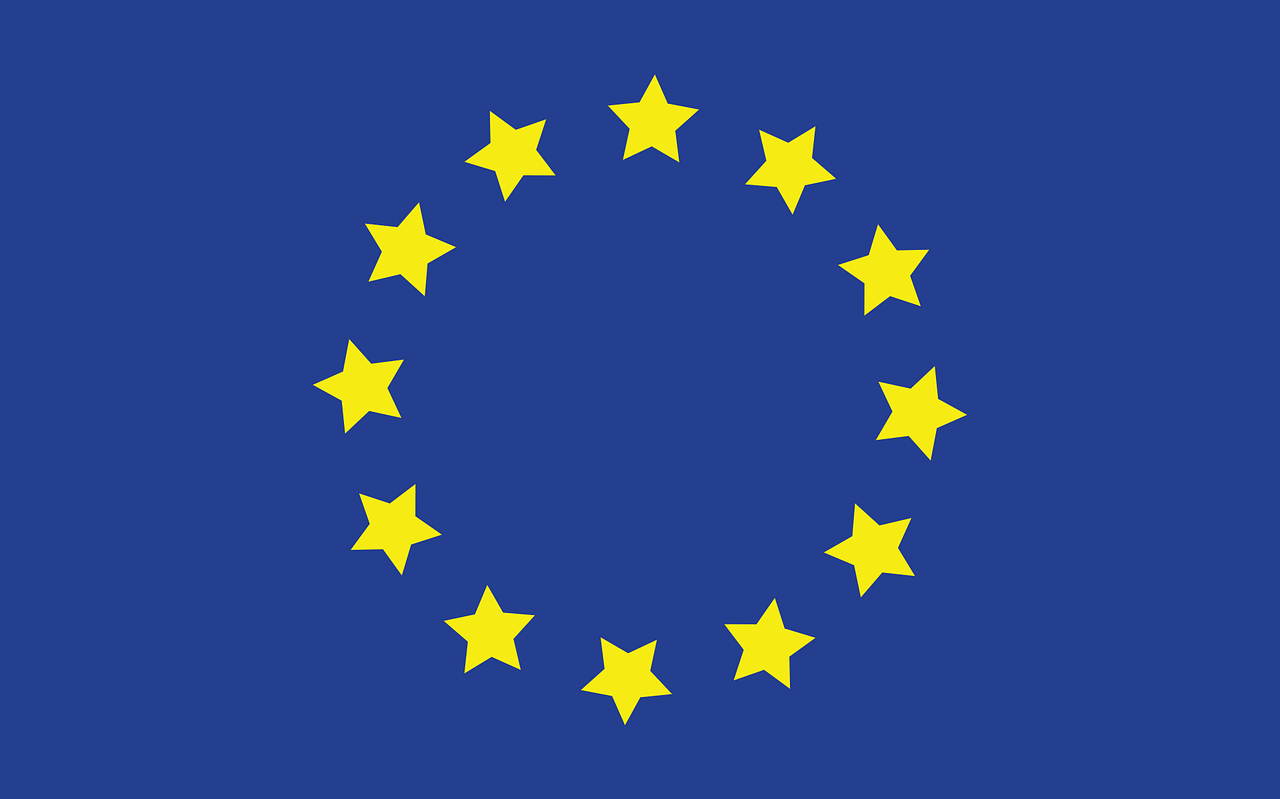}.

\printbibliography 

@article {AllCah,
    AUTHOR = {Allen, S. M. and Cahn, J. W.},
     TITLE = {A macroscopic theory for antiphase boundary motion and its
application to antiphase domain coarsing},
   JOURNAL = {Acta Metall.},
  VOLUME = {27},
      YEAR = {1979},
    NUMBER = {5},
     PAGES = {1085--1095},
}

@article {AlmChaNov,
    AUTHOR = {Almeida, L. and Chambolle, A. and Novaga, M.},
     TITLE = {Mean curvature flow with obstacles},
   JOURNAL = {Ann. Inst. H. Poincar\'{e} C Anal. Non Lin\'{e}aire},
  FJOURNAL = {Annales de l'Institut Henri Poincar\'{e} C. Analyse Non Lin\'{e}aire},
    VOLUME = {29},
      YEAR = {2012},
    NUMBER = {5},
     PAGES = {667--681},
      ISSN = {0294-1449},
   MRCLASS = {53C44 (35R37 35R45 49Q05)},
  MRNUMBER = {2971026},
MRREVIEWER = {Karsten Grosse-Brauckmann},
       DOI = {10.1016/j.anihpc.2012.03.002},
       %URL = {https://doi.org/10.1016/j.anihpc.2012.03.002},
}

@misc{LauStiUll,
      title={Diffuse-interface approximation and weak-strong uniqueness of anisotropic mean curvature flow}, 
      author={Laux, T. and  Stinson, K.  and Ullrich, C.},
      year={2022},
      eprint={2212.11939},
      archivePrefix={arXiv},
}

@misc{FucLau,
      title={Strong convergence of the thresholding scheme for the mean curvature flow of mean convex sets}, 
      author={Fuchs, J. and  Laux, T.},
      year={2022},
      eprint={2201.00413},
      archivePrefix={arXiv},
      primaryClass={math.AP}
}

@article {ChaNov22,
    AUTHOR = {Chambolle, A:  and Novaga, M.},
     TITLE = {Anisotropic and crystalline mean curvature flow of mean-convex
              sets},
   JOURNAL = {Ann. Sc. Norm. Super. Pisa Cl. Sci. (5)},
  FJOURNAL = {Annali della Scuola Normale Superiore di Pisa. Classe di
              Scienze. Serie V},
    VOLUME = {23},
      YEAR = {2022},
    NUMBER = {2},
     PAGES = {623--643},
      ISSN = {0391-173X},
}

@book {AmbFusPal,
    AUTHOR = {Ambrosio, L. and Fusco, N. and Pallara, D.},
     TITLE = {Functions of bounded variation and free discontinuity
              problems},
    SERIES = {Oxford Mathematical Monographs},
 PUBLISHER = {The Clarendon Press, Oxford University Press, New York},
      YEAR = {2000},
     PAGES = {xviii+434},
      ISBN = {0-19-850245-1},
   MRCLASS = {49-02 (49J45 49K10 49Qxx)},
  MRNUMBER = {1857292},
MRREVIEWER = {J. E. Brothers},
}

@article {AlmTayWan,
    AUTHOR = {Almgren, F. and Taylor, J. E. and Wang, L.},
     TITLE = {Curvature-driven flows: a variational approach},
   JOURNAL = {SIAM J. Control Optim.},
  FJOURNAL = {SIAM Journal on Control and Optimization},
    VOLUME = {31},
      YEAR = {1993},
    NUMBER = {2},
     PAGES = {387--438},
      ISSN = {0363-0129},
   MRCLASS = {58E50 (49Q10 58E15 73B99)},
  MRNUMBER = {1205983},
       DOI = {10.1137/0331020},
       %URL = {https://doi.org/10.1137/0331020},
}

@incollection {Bar-rev,
    AUTHOR = {Barles, G.},
     TITLE = {An introduction to the theory of viscosity solutions for
              first-order {H}amilton-{J}acobi equations and applications},
 BOOKTITLE = {Hamilton-{J}acobi equations: approximations, numerical
              analysis and applications},
    SERIES = {Lecture Notes in Math.},
    VOLUME = {2074},
     PAGES = {49--109},
 PUBLISHER = {Springer, Heidelberg},
      YEAR = {2013},
       DOI = {10.1007/978-3-642-36433-4\_2},
}

@article {BarSou,
    AUTHOR = {Barles, G. and Souganidis, P. E.},
     TITLE = {A new approach to front propagation problems: theory and
              applications},
   JOURNAL = {Arch. Rational Mech. Anal.},
  FJOURNAL = {Archive for Rational Mechanics and Analysis},
    VOLUME = {141},
      YEAR = {1998},
    NUMBER = {3},
     PAGES = {237--296},
      ISSN = {0003-9527},
       DOI = {10.1007/s002050050077},
}

@article {BarSonSou,
    AUTHOR = {Barles, G. and Soner, H. M. and Souganidis, P. E.},
     TITLE = {Front propagation and phase field theory},
   JOURNAL = {SIAM J. Control Optim.},
  FJOURNAL = {SIAM Journal on Control and Optimization},
    VOLUME = {31},
      YEAR = {1993},
    NUMBER = {2},
     PAGES = {439--469},
      ISSN = {0363-0129},
   MRCLASS = {35A05 (35K57 53A10)},
  MRNUMBER = {1205984},
MRREVIEWER = {Hong Ming Yin},
       DOI = {10.1137/0331021},
       %URL = {https://doi.org/10.1137/0331021},
}

@article {BelPao,
    AUTHOR = {Bellettini, G. and Paolini, M.},
     TITLE = {Anisotropic motion by mean curvature in the context of
              {F}insler geometry},
   JOURNAL = {Hokkaido Math. J.},
  FJOURNAL = {Hokkaido Mathematical Journal},
    VOLUME = {25},
      YEAR = {1996},
    NUMBER = {3},
     PAGES = {537--566},
      ISSN = {0385-4035},
   MRCLASS = {53C60 (49Q05 53A10 58E15)},
  MRNUMBER = {1416006},
MRREVIEWER = {Yoshikazu Giga},
       DOI = {10.14492/hokmj/1351516749},
       %URL = {https://doi.org/10.14492/hokmj/1351516749},
}

@article {CagMorMor,
    AUTHOR = {Cagnetti, F. and Mora, M. G. and Morini, M.},
     TITLE = {A second order minimality condition for the {M}umford-{S}hah
              functional},
   JOURNAL = {Calc. Var. Partial Differential Equations},
  FJOURNAL = {Calculus of Variations and Partial Differential Equations},
    VOLUME = {33},
      YEAR = {2008},
    NUMBER = {1},
     PAGES = {37--74},
      ISSN = {0944-2669},
   MRCLASS = {49Q20 (49K10)},
  MRNUMBER = {2413101},
       DOI = {10.1007/s00526-007-0152-3},
       %URL = {https://doi.org/10.1007/s00526-007-0152-3},
}

@article {EseOtt,
    AUTHOR = {Esedoglu, S. and Otto, F.},
     TITLE = {Threshold dynamics for networks with arbitrary surface
              tensions},
   JOURNAL = {Comm. Pure Appl. Math.},
  FJOURNAL = {Communications on Pure and Applied Mathematics},
    VOLUME = {68},
      YEAR = {2015},
    NUMBER = {5},
     PAGES = {808--864},
       DOI = {10.1002/cpa.21527},
}

@article {Cha,
    AUTHOR = {Chambolle, A.},
     TITLE = {An algorithm for mean curvature motion},
   JOURNAL = {Interfaces Free Bound.},
  FJOURNAL = {Interfaces and Free Boundaries. Mathematical Analysis,
              Computation and Applications},
    VOLUME = {6},
      YEAR = {2004},
    NUMBER = {2},
     PAGES = {195--218},
      ISSN = {1463-9963},
       DOI = {10.4171/IFB/97},
       URL = {https://doi.org/10.4171/IFB/97},
}

@article {ChaMorPon12,
    AUTHOR = {Chambolle, A. and Morini, M. and Ponsiglione, M.},
     TITLE = {A nonlocal mean curvature flow and its semi-implicit
              time-discrete approximation},
   JOURNAL = {SIAM J. Math. Anal.},
  FJOURNAL = {SIAM Journal on Mathematical Analysis},
    VOLUME = {44},
      YEAR = {2012},
    NUMBER = {6},
     PAGES = {4048--4077},
       DOI = {10.1137/120863587},
}

@article {ChaMorPon15,
    AUTHOR = {Chambolle, A. and Morini, M. and Ponsiglione,
              M.},
     TITLE = {Nonlocal curvature flows},
   JOURNAL = {Arch. Ration. Mech. Anal.},
  FJOURNAL = {Archive for Rational Mechanics and Analysis},
    VOLUME = {218},
      YEAR = {2015},
    NUMBER = {3},
     PAGES = {1263--1329},
       DOI = {10.1007/s00205-015-0880-z},
       %URL = {https://doi.org/10.1007/s00205-015-0880-z},
}

@article {ChaMorNovPon,
    AUTHOR = {Chambolle, A. and Morini, M. and Novaga, M.
              and Ponsiglione, Marcello},
     TITLE = {Existence and uniqueness for anisotropic and crystalline mean
              curvature flows},
   JOURNAL = {J. Amer. Math. Soc.},
  FJOURNAL = {Journal of the American Mathematical Society},
    VOLUME = {32},
      YEAR = {2019},
    NUMBER = {3},
     PAGES = {779--824},
      ISSN = {0894-0347},
   MRCLASS = {53E10 (35D30 35K93 49M25)},
  MRNUMBER = {3981988},
MRREVIEWER = {Alina Stancu},
       DOI = {10.1090/jams/919},
       %URL = {https://doi.org/10.1090/jams/919},
}

@article {CheGigGot,
    AUTHOR = {Chen, Y. G. and Giga, Y. and Goto, S.},
     TITLE = {Uniqueness and existence of viscosity solutions of generalized
              mean curvature flow equations},
   JOURNAL = {J. Differential Geom.},
  FJOURNAL = {Journal of Differential Geometry},
    VOLUME = {33},
      YEAR = {1991},
    NUMBER = {3},
     PAGES = {749--786},
}

@article{MerBenOsh,
  title={Motion of multiple junctions: A level set approach},
  author={Merriman, B. and Bence, J. K. and Osher, S. J.},
  journal={Journal of computational physics},
  volume={112},
  number={2},
  pages={334--363},
  year={1994},
  publisher={Elsevier}
}

@article {Ilm93,
    AUTHOR = {Ilmanen, T.},
     TITLE = {Convergence of the {A}llen-{C}ahn equation to {B}rakke's
              motion by mean curvature},
   JOURNAL = {J. Differential Geom.},
  FJOURNAL = {Journal of Differential Geometry},
    VOLUME = {38},
      YEAR = {1993},
    NUMBER = {2},
     PAGES = {417--461},
      ISSN = {0022-040X},
   MRCLASS = {58E12 (35K22 53A10)},
  MRNUMBER = {1237490},
MRREVIEWER = {Anders Linn\'{e}r},
       URL = {http://projecteuclid.org/euclid.jdg/1214454300},
}

@article {EvaSonSou,
    AUTHOR = {Evans, L. C. and Soner, H. M. and Souganidis, P. E.},
     TITLE = {Phase transitions and generalized motion by mean curvature},
   JOURNAL = {Comm. Pure Appl. Math.},
  FJOURNAL = {Communications on Pure and Applied Mathematics},
    VOLUME = {45},
      YEAR = {1992},
    NUMBER = {9},
     PAGES = {1097--1123},
      ISSN = {0010-3640},
       DOI = {10.1002/cpa.3160450903},
}

@misc{HenLau,
      title={A new varifold solution concept for mean curvature flow: Convergence of the Allen-Cahn equation and weak-strong uniqueness}, 
      author={Hensel, S. and Laux, T.},
      year={2021},
      eprint={2109.04233},
      archivePrefix={arXiv},
      primaryClass={math.AP}
}

@article {ChaNov08,
    AUTHOR = {Chambolle, A. and Novaga, M.},
     TITLE = {Implicit time discretization of the mean curvature flow with a
              discontinuous forcing term},
   JOURNAL = {Interfaces Free Bound.},
  FJOURNAL = {Interfaces and Free Boundaries. Mathematical Analysis,
              Computation and Applications},
    VOLUME = {10},
      YEAR = {2008},
    NUMBER = {3},
     PAGES = {283--300},
      ISSN = {1463-9963},
   MRCLASS = {53C44 (35K55 49Q20 65M06)},
  MRNUMBER = {2453133},
MRREVIEWER = {Yoshikazu Giga},
       DOI = {10.4171/ifb/190},
       URL = {https://doi.org/10.4171/ifb/190},
}

@article {DePMag1,
    AUTHOR = {De Philippis, G. and Maggi, F.},
     TITLE = {Regularity of free boundaries in anisotropic capillarity
              problems and the validity of {Y}oung's law},
   JOURNAL = {Arch. Ration. Mech. Anal.},
  FJOURNAL = {Archive for Rational Mechanics and Analysis},
    VOLUME = {216},
      YEAR = {2015},
    NUMBER = {2},
     PAGES = {473--568},
      ISSN = {0003-9527},
   MRCLASS = {35R30 (28A75 35B65)},
  MRNUMBER = {3317808},
MRREVIEWER = {Antoine Henrot},
       DOI = {10.1007/s00205-014-0813-2},
       %URL = {https://doi.org/10.1007/s00205-014-0813-2},
}

@inproceedings{Des,
 author={Desbrun, M. and Meyer, M. and Schr{\"o}der, P.  and Barr, A. H.},
  title={Implicit fairing of irregular meshes using diffusion and curvature flow},
  booktitle={SIGGRAPH '99},
  year={1999}
}

@article {DePLau,
    AUTHOR = {De Philippis, G. and Laux, T.},
     TITLE = {Implicit time discretization for the mean curvature flow of
              mean convex sets},
   JOURNAL = {Ann. Sc. Norm. Super. Pisa Cl. Sci. (5)},
  FJOURNAL = {Annali della Scuola Normale Superiore di Pisa. Classe di
              Scienze. Serie V},
    VOLUME = {21},
      YEAR = {2020},
     PAGES = {911--930},
}

@article {EtoGigIsh,
    AUTHOR = {Eto, T. and Giga, Y. and Ishii, K.},
     TITLE = {An area-minimizing scheme for anisotropic mean-curvature flow},
   JOURNAL = {Adv. Differential Equations},
  FJOURNAL = {Advances in Differential Equations},
    VOLUME = {17},
      YEAR = {2012},
    NUMBER = {11-12},
     PAGES = {1031--1084},
      ISSN = {1079-9389},
   MRCLASS = {35K55 (35K65 35K67 49Q05 53C44)},
  MRNUMBER = {3013412},
MRREVIEWER = {Alexander A. Pankov},
}

@article {EvaSpr,
    AUTHOR = {Evans, L. C. and Spruck, J.},
     TITLE = {Motion of level sets by mean curvature. {I}},
   JOURNAL = {J. Differential Geom.},
  FJOURNAL = {Journal of Differential Geometry},
    VOLUME = {33},
      YEAR = {1991},
    NUMBER = {3},
     PAGES = {635--681},
       URL = {http://projecteuclid.org/euclid.jdg/1214446559},
}

@article {FusJulMor,
    AUTHOR = {Fusco, N. and Julin, V. and Morini, M.},
     TITLE = {Stationary sets and asymptotic behavior of the mean curvature
              flow with forcing in the plane},
   JOURNAL = {J. Geom. Anal.},
  FJOURNAL = {Journal of Geometric Analysis},
    VOLUME = {32},
      YEAR = {2022},
    NUMBER = {2},
     PAGES = {Paper No. 53, 29},
      ISSN = {1050-6926},
   MRCLASS = {53E10},
  MRNUMBER = {4359480},
       DOI = {10.1007/s12220-021-00806-x},
       %URL = {https://doi.org/10.1007/s12220-021-00806-x},
}

@article {GigGotIshSat,
    AUTHOR = {Giga, Y. and Goto, S. and Ishii, H. and Sato, M.-H.},
     TITLE = {Comparison principle and convexity preserving properties for
              singular degenerate parabolic equations on unbounded domains},
   JOURNAL = {Indiana Univ. Math. J.},
  FJOURNAL = {Indiana University Mathematics Journal},
    VOLUME = {40},
      YEAR = {1991},
    NUMBER = {2},
     PAGES = {443--470},
      ISSN = {0022-2518},
   MRCLASS = {35B05 (35D05 35K65)},
  MRNUMBER = {1119185},
MRREVIEWER = {Maura Ughi},
       DOI = {10.1512/iumj.1991.40.40023},
       %URL = {https://doi.org/10.1512/iumj.1991.40.40023},
}

@article {GigPoz22,
    AUTHOR = {Giga, Y. and Po\v{z}\'{a}r, N.},
     TITLE = {Motion by crystalline-like mean curvature: a survey},
   JOURNAL = {Bull. Math. Sci.},
  FJOURNAL = {Bulletin of Mathematical Sciences},
    VOLUME = {12},
      YEAR = {2022},
    NUMBER = {2},
     PAGES = {Paper No. 2230004, 68},
      ISSN = {1664-3607},
   MRCLASS = {35K93 (35B51)},
  MRNUMBER = {4460400},
       DOI = {10.1142/S1664360722300043},
       %URL = {https://doi.org/10.1142/S1664360722300043},
}

@article {Gur,
    AUTHOR = {Gurtin, M. E.},
     TITLE = {Toward a nonequilibrium thermodynamics of two-phase materials},
   JOURNAL = {Arch. Rational Mech. Anal.},
  FJOURNAL = {Archive for Rational Mechanics and Analysis},
    VOLUME = {100},
      YEAR = {1988},
    NUMBER = {3},
     PAGES = {275--312},
      ISSN = {0003-9527},
   MRCLASS = {80A20 (73B30)},
  MRNUMBER = {918798},
MRREVIEWER = {R. R. Huilgol},
       DOI = {10.1007/BF00251518},
       URL = {https://doi.org/10.1007/BF00251518},
}

@article {Hui86,
    AUTHOR = {Huisken, G.},
     TITLE = {Contracting convex hypersurfaces in {R}iemannian manifolds by
              their mean curvature},
   JOURNAL = {Invent. Math.},
  FJOURNAL = {Inventiones Mathematicae},
    VOLUME = {84},
      YEAR = {1986},
    NUMBER = {3},
     PAGES = {463--480},
      ISSN = {0020-9910},
   MRCLASS = {53C45 (53C42 58D25)},
  MRNUMBER = {837523},
MRREVIEWER = {Michael T. Anderson},
       DOI = {10.1007/BF01388742},
       %URL = {https://doi.org/10.1007/BF01388742},
}

@incollection {HuiPol,
    AUTHOR = {Huisken, G. and Polden, A.},
     TITLE = {Geometric evolution equations for hypersurfaces},
 BOOKTITLE = {Calculus of variations and geometric evolution problems
              ({C}etraro, 1996)},
    SERIES = {Lecture Notes in Math.},
    VOLUME = {1713},
     PAGES = {45--84},
 PUBLISHER = {Springer, Berlin},
      YEAR = {1999},
   MRCLASS = {53C44 (35K55 49Q05)},
  MRNUMBER = {1731639},
MRREVIEWER = {John Urbas},
       DOI = {10.1007/BFb0092669},
      %URL = {https://doi.org/10.1007/BFb0092669},
}

@article {Hut,
    AUTHOR = {Hutchinson, J. E.},
     TITLE = {Second fundamental form for varifolds and the existence of
              surfaces minimising curvature},
   JOURNAL = {Indiana Univ. Math. J.},
  FJOURNAL = {Indiana University Mathematics Journal},
    VOLUME = {35},
      YEAR = {1986},
    NUMBER = {1},
     PAGES = {45--71},
      ISSN = {0022-2518},
   MRCLASS = {49F22 (58E15)},
  MRNUMBER = {825628},
MRREVIEWER = {Harold Parks},
       DOI = {10.1512/iumj.1986.35.35003},
       %URL = {https://doi.org/10.1512/iumj.1986.35.35003},
}

@incollection {Ilm,
    AUTHOR = {Ilmanen, T.},
     TITLE = {The level-set flow on a manifold},
 BOOKTITLE = {Differential geometry: partial differential equations on
              manifolds ({L}os {A}ngeles, {CA}, 1990)},
    SERIES = {Proc. Sympos. Pure Math.},
    VOLUME = {54},
     PAGES = {193--204},
 PUBLISHER = {Amer. Math. Soc., Providence, RI},
      YEAR = {1993},
   MRCLASS = {58E15 (35B50 53A10)},
  MRNUMBER = {1216585},
MRREVIEWER = {Anders Linn\'{e}r},
       DOI = {10.1090/pspum/054.1/1216585},
       %URL = {https://doi.org/10.1090/pspum/054.1/1216585},
}

@article {IshSou,
    AUTHOR = {Ishii, H. and Souganidis, P.},
     TITLE = {Generalized motion of noncompact hypersurfaces with velocity
              having arbitrary growth on the curvature tensor},
   JOURNAL = {Tohoku Math. J. (2)},
  FJOURNAL = {The Tohoku Mathematical Journal. Second Series},
    VOLUME = {47},
      YEAR = {1995},
    NUMBER = {2},
     PAGES = {227--250},
      ISSN = {0040-8735},
      DOI = {10.2748/tmj/1178225593},
}

@article {LauOtt,
    AUTHOR = {Laux, T. and Otto, F.},
     TITLE = {Convergence of the thresholding scheme for multi-phase
              mean-curvature flow},
   JOURNAL = {Calc. Var. Partial Differential Equations},
  FJOURNAL = {Calculus of Variations and Partial Differential Equations},
    VOLUME = {55},
      YEAR = {2016},
    NUMBER = {5},
     PAGES = {Art. 129, 74},
      ISSN = {0944-2669},
   MRCLASS = {35K93 (35A15 35B30 35K08 65J08)},
  MRNUMBER = {3556529},
MRREVIEWER = {Glen E. Wheeler},
       DOI = {10.1007/s00526-016-1053-0},
       %URL = {https://doi.org/10.1007/s00526-016-1053-0},
}

@article {LucStu,
    AUTHOR = {Luckhaus, S. and Sturzenhecker, T.},
     TITLE = {Implicit time discretization for the mean curvature flow
              equation},
   JOURNAL = {Calc. Var. Partial Differential Equations},
  FJOURNAL = {Calculus of Variations and Partial Differential Equations},
    VOLUME = {3},
      YEAR = {1995},
    NUMBER = {2},
     PAGES = {253--271},
      ISSN = {0944-2669},
   MRCLASS = {65M06 (58E12)},
  MRNUMBER = {1386964},
MRREVIEWER = {R. N. Mukherjee},
       DOI = {10.1007/BF01205007},
       %URL = {https://doi.org/10.1007/BF01205007},
}

@book {Mag-book,
    AUTHOR = {Maggi, F.},
     TITLE = {Sets of finite perimeter and geometric variational problems},
    SERIES = {Cambridge Studies in Advanced Mathematics},
    VOLUME = {135},
      NOTE = {An introduction to geometric measure theory},
 PUBLISHER = {Cambridge University Press, Cambridge},
      YEAR = {2012},
     PAGES = {xx+454},
      ISBN = {978-1-107-02103-7},
   MRCLASS = {49-01 (26B20 28-02 49-02 49Q05 49Q20)},
  MRNUMBER = {2976521},
MRREVIEWER = {Giovanni Alberti},
       DOI = {10.1017/CBO9781139108133},
       %URL = {https://doi.org/10.1017/CBO9781139108133},
}

@article {MugSeiSpa,
    AUTHOR = {Mugnai, L. and Seis, C. and Spadaro, E.},
     TITLE = {Global solutions to the volume-preserving mean-curvature flow},
   JOURNAL = {Calc. Var. Partial Differential Equations},
  FJOURNAL = {Calculus of Variations and Partial Differential Equations},
    VOLUME = {55},
      YEAR = {2016},
    NUMBER = {1},
     PAGES = {Art. 18, 23},
      ISSN = {0944-2669},
       DOI = {10.1007/s00526-015-0943-x},
       %URL = {https://doi.org/10.1007/s00526-015-0943-x},
}

@article {SchSimAlm,
    AUTHOR = {Schoen, R. and Simon, L. and Almgren, Jr., F. J.},
     TITLE = {Regularity and singularity estimates on hypersurfaces
              minimizing parametric elliptic variational integrals. {I},
              {II}},
   JOURNAL = {Acta Math.},
  FJOURNAL = {Acta Mathematica},
    VOLUME = {139},
      YEAR = {1977},
    NUMBER = {3-4},
     PAGES = {217--265},
      ISSN = {0001-5962},
   MRCLASS = {49F22},
  MRNUMBER = {467476},
MRREVIEWER = {William K. Allard},
       DOI = {10.1007/BF02392238},
       %URL = {https://doi.org/10.1007/BF02392238},
}

@book {Set,
    AUTHOR = {Sethian, J. A.},
     TITLE = {Level set methods and fast marching methods},
    SERIES = {Cambridge Monographs on Applied and Computational Mathematics},
    VOLUME = {3},
   EDITION = {Second Edition},
      NOTE = {Evolving interfaces in computational geometry, fluid mechanics, computer vision, and materials science},
 PUBLISHER = {Cambridge University Press, Cambridge},
      YEAR = {1999},
     PAGES = {xx+378},
  %     ISBN = {0-521-64204-3; 0-521-64557-3},
  %  MRCLASS = {65D17 (00A69 65D18 68U10 74S30 76M25)},
  % MRNUMBER = {1700751},
}

@article{Tau,
  author={Taubin, G.},
  title={A signal processing approach to fair surface design},
  journal={Proceedings of the 22nd annual conference on Computer graphics and interactive techniques},
  year={1995}
}
\end{document}